\documentclass{amsart}
\usepackage{amsfonts,amssymb,amscd,amsmath,latexsym,amsbsy,bm}
\numberwithin{equation}{section}
\theoremstyle{plain}
\usepackage{array}
\usepackage{tikz}

\newtheorem{thm}{Theorem}[section]

\newtheorem{prop}[thm]{Proposition}
\newtheorem{defi}[thm]{Definition}
\newtheorem{lem}[thm]{Lemma}

\theoremstyle{remark}
\newtheorem{rema}[thm]{Remark}

\makeatletter
\newcommand*\rel@kern[1]{\kern#1\dimexpr\macc@kerna}
\newcommand*\widebar[1]{%
  \begingroup
  \def\mathaccent##1##2{%
    \rel@kern{0.8}%
    \overline{\rel@kern{-0.8}\macc@nucleus\rel@kern{0.2}}%
    \rel@kern{-0.2}%
  }%
  \macc@depth\@ne
  \let\math@bgroup\@empty \let\math@egroup\macc@set@skewchar
  \mathsurround\z@ \frozen@everymath{\mathgroup\macc@group\relax}%
  \macc@set@skewchar\relax
  \let\mathaccentV\macc@nested@a
  \macc@nested@a\relax111{#1}%
  \endgroup
}
\makeatother

\newcommand{\Z}{\mathbb{Z}}

\newcommand{\C}{\mathbb{C}}
\newcommand{\e}{\mathrm{e}}
\renewcommand{\d}{\mathrm{d}}

\newcommand{\id}{\mathrm{id}}
\newcommand{\Id}{\mathrm{Id}}
\newcommand{\End}{\mathrm{End}}

\newcommand{\pr}{\mathrm{pr}}
\newcommand{\siy}{\sigma^{\mathrm{y}}}
\renewcommand{\c}{\mathrm{c}}

\title[Integral solutions to boundary qKZ equations]{Integral solutions to boundary quantum Knizhnik-Zamolodchikov equations}
\author{Nicolai Reshetikhin, Jasper Stokman, Bart Vlaar}
\address{N.R.: Department of Mathematics, University of California, Berkeley,
CA 94720, USA \& ITMO University, Kronverskii Ave. 49, Saint Petersburg, 197101, Russia \& KdV Institute for Mathematics, University of Amsterdam, Science Park 105-107, 1098 XG Amsterdam, The Netherlands}
\email{reshetik@math.berkeley.edu}
\address{J.S.: KdV Institute for Mathematics, University of Amsterdam,
Science Park 105-107, 1098 XG Amsterdam, The Netherlands.}
\email{j.v.stokman@uva.nl}
\address{B.V.: School of Mathematical Sciences, University of Nottingham, University Park, Nottingham, NG7 2RD, UK.}
\email{Bart.Vlaar@nottingham.ac.uk}

\begin{document}
\keywords{}
\maketitle
\setcounter{tocdepth}{1}
\begin{abstract}
We construct integral representations of solutions to the boundary quantum Knizhnik-Zamolodchikov equations.
These are difference equations taking values in tensor products of Verma modules of quantum affine $\mathfrak{sl}_2$, with the K-operators acting diagonally. 
The integrands in question are products of scalar-valued elliptic weight functions with 
vector-valued trigonometric weight functions (boundary Bethe vectors).
These integrals give rise to a basis of solutions of the boundary 
qKZ equations over the field of quasi-constant meromorphic functions in weight subspaces of the tensor product. 
\end{abstract}


\section{Introduction}

The boundary quantum Knizhnik-Zamolodchikov (qKZ) equations have roots in representation theory, in the theory of solvable models in statistical mechanics and integrable quantum field theory. 
They first appeared in \cite{Ch} where the qKZ equation of type A \cite{FR} was generalized to other affine Weyl types. 
What we call here the boundary qKZ equation corresponds to type C. 
Shortly after the work \cite{Ch} the boundary qKZ equation appeared in \cite{JKKMW} as an equation for correlation functions in the 6-vertex model with reflecting boundary conditions \cite{Sk}.
The boundary qKZ equations also appear as an equation for form-factors in such models and in quantum integrable field theories on the half-line \cite{GZ}.
In \cite{Vl} the precise relation between the boundary qKZ equations and Sklyanin's commuting transfer matrices \cite{Sk} was established.

Recall that the boundary qKZ equations related to quantum affine $\mathfrak{sl}_2$ require the following data (we refer the reader to Section \ref{sec:integrabledata} for more details).
Fix highest weights $\ell_1, \dots, \ell_N \in \C$ of Verma modules $V^{\ell_1},\ldots,V^{\ell_N}$ of $U_q(\widehat{\mathfrak{sl}}_2)$ and a step size 
$\tau \in \C$. 
Recall that the Yang-Baxter relation for linear operators $R^{\ell_1\ell_2}(x): V^{\ell_1}\otimes V^{\ell_2}\rightarrow V^{\ell_1}\otimes V^{\ell_2}$ is the identity
\begin{equation}\label{YBE}
R^{\ell_1 \ell_2}_{12}(x) R^{\ell_1 \, \ell_3}_{13}(x+y) R^{\ell_2 \, \ell_3}_{23}(y) =  R^{\ell_2 \,\ell_3}_{23}(y) R^{\ell_1 \, \ell_3}_{13}(x+y) R^{\ell_1 \ell_2}_{12}(x),
\end{equation}
for linear operators in $V^{\ell_1} \otimes V^{\ell_2} \otimes V^{\ell_3}$. 
Here we use standard tensor leg notations, e.g., 
$R^{\ell_1 \ell_2}_{12}(x) =R^{\ell_1 \ell_2}(x)\otimes \Id_{V^{\ell_3}}$. 
Given $R^{\ell_1 \ell_2}(x)$ satisfying \eqref{YBE}, 
the left and right reflection equations are identities in 
$V^{\ell_1} \otimes V^{\ell_2}$ for two linear operators 
$K^{\pm,\ell}(x): V^\ell \to V^\ell$ ($\ell=\ell_1,\ell_2$), namely
\begin{equation} \label{REs}
\begin{aligned}
&R^{\ell_1 \ell_2}(x-y) K^{+,\ell_1}_{1}(x) P^{\ell_2 \ell_1} R^{\ell_2 \ell_1}(x+y) P^{\ell_1 \ell_2} K^{+,\ell_2}_{2}(y) = \\
&\quad= K^{+,\ell_2}_{2}(y) R^{\ell_1 \ell_2}(x+y) K^{+,\ell_1}_{1}(x) P^{\ell_2 \ell_1}  R^{\ell_2 \ell_1}(x-y) P^{\ell_1 \ell_2}, \\
& P^{\ell_2 \ell_1}  R^{\ell_2 \ell_1}(x-y) P^{\ell_1 \ell_2} K^{-,\ell_1}_{1}(x) R^{\ell_1 \ell_2}(x+y) K^{-,\ell_2}_2(y) = \\
&\quad= K^{-,\ell_2}_{2}(y)  P^{\ell_2 \ell_1} R^{\ell_2 \ell_1}(x+y) P^{\ell_1 \ell_2} K^{-,\ell_1}_{1}(x) R^{\ell_1 \ell_2}(x-y),
\end{aligned}
\end{equation}
respectively; here $P^{\ell_1 \ell_2}: V^{\ell_1} \otimes V^{\ell_2} \to V^{\ell_2}  \otimes V^{\ell_1}$ is the flip of tensor factors.

The corresponding boundary qKZ equations are the compatible system of
difference equations
\begin{equation} \label{bqkz-int}
\begin{aligned}
& \Psi(t_1, \dots, t_r+\tau, \dots, t_N) =
R^{\ell_r \, \ell_{r+1}}_{r \, r+1}(t_r-t_{r+1}+\tau) \cdots R^{\ell_r \, \ell_N}_{r \, N}(t_r-t_N+\tau)  \\
 & \quad \times K^{+,\ell_r}_r(t_r+\tfrac{\tau}{2})  R^{\ell_N \, \ell_r}_{N \, r}(t_N+t_r) \cdots R^{\ell_{r+1} \, \ell_r}_{r+1 \, r}(t_{r+1}+t_r) \\
 & \quad \times R^{\ell_{r-1} \, \ell_r}_{r-1 \, r}(t_{r-1}+t_r) \cdots R^{\ell_1 \, \ell_r}_{1 \, r}(t_1+t_r) K^{-,\ell_r}_r(t_r) \\
 & \quad \times \bigl( R^{\ell_1 \, \ell_r}_{1 \, r}(t_1-t_r) \bigr)^{-1} \cdots \bigl(R^{\ell_{r-1} \, \ell_r}_{r-1 \, r}(t_{r-1}-t_r) \bigr)^{-1}
 \Psi(t_1, \dots, t_r, \dots, t_N),\hspace{-15pt} \end{aligned}
\end{equation}
with $r \in \{1,\ldots,N\}$ for meromorphic $V^{\ell_1}\otimes\cdots\otimes 
V^{\ell_N}$-valued functions 
$\Psi(\bm t)$ in $\bm t=(t_1,\ldots,t_N)\in\mathbb{C}^N$. 
The Yang-Baxter equation \eqref{YBE} and the reflection equations \eqref{REs} 
guarantee the compatibility of the system \eqref{bqkz-int}. \\

Special solutions to the boundary qKZ equations are known. 
First results go back to the papers \cite{JKKKM,JKKMW} where the Heisenberg algebra realization of q-vertex operators was used to construct a specific solution corresponding to correlation functions in the 6-vertex model. 
This method was applied to other models, to obtain special solutions to boundary qKZ for other R-matrices, see \cite{Ko} for an overview of some results in this direction. 
A family of solutions to the boundary qKZ equations was constructed in our earlier papers \cite{RSV,RSV2} as Jackson integrals (bilateral series), cf. the works \cite{R,TV1} which dealt with the same topic in type A. 
In \cite{SV} Laurent-polynomial solutions were found in terms of nonsymmetric Koornwinder polynomials associated to principal series modules of the affine Hecke algebra of type C.

\subsection{Summary of main results}

This paper can be regarded as a "type C counterpart" of \cite{TV2} where a basis of integral solutions to the qKZ equations of type A was constructed. 
{}From the perspective of the Heisenberg XXZ spin chains, the present paper
deals with (diagonal) integrable reflecting boundary conditions as opposed to
quasi-periodic boundary conditions in \cite{TV2}. 

We consider the boundary qKZ equation \eqref{bqkz-int} for the tensor 
product of $N$ finite-dimensional representations of quantum affine 
$\mathfrak{sl}_2$ and/or Verma modules over quantum $\mathfrak{sl}_2$, 
for K-matrices which are diagonal in the weight basis.
For a fixed total weight space in the tensor product of the representations,
which is naturally labelled by a nonnegative integer $M$, 
we construct a basis for the associated
meromorphic solutions $\{\Psi_{\bm k}\}_{\bm k}$ of the boundary qKZ equations
over the field of $\tau \Z^N$-periodic 
meromorphic functions admitting an explicit integral representation
\begin{equation} \label{integrals-int}
\Psi_{\bm k}(\bm t) = \int_{C_{\bm k}(\bm t)}  w_{\bm k}(\bm x ; \bm t) \widetilde{\mathcal{B}}(\bm x;\bm t) \Omega \d^M \bm x
\end{equation}
for $\bm t$ deep enough in the asymptotic sector 
\[
\{\bm t \,\, | \,\, \Re(t_1) > \Re(t_2) > \cdots > \Re(t_N) >0\},
\]
where $\Re(t)$ is the real part of $t$. 
The index set consists of
$M$-tuples $\bm k = (k_1,\ldots,k_M)$ of integers satisfying $1 \leq k_1 \leq k_2 \leq \ldots \leq k_M \leq N$, in case of Verma modules. 
All the elements appearing in these integrals will be precisely defined in Section \ref{sec:integralsolutions}; we only emphasize some key points here.
The integration contours $C_{\bm k}(\bm t)$ are Cartesian products of line segments in the complex plane of length $\pi$ parallel to the imaginary axis, whose location depends on the variables $t_{k_1},\ldots,t_{k_M}$.
The weight functions $w_{\bm k}$ are similar to those used in \cite{RSV,RSV2}; more precisely, they are solutions to the same difference equations, but differ by exponential factors and quotients of Jacobi theta functions. 
The $\widetilde{\mathcal{B}}(\bm x;\bm t) \Omega$ are so-called off-shell Bethe vectors for reflecting boundary conditions as introduced in \cite{Sk}; they are, up to a scalar factor, those used in \cite{RSV,RSV2}.
The proof of completeness and of independence of solutions is done by a careful study of asymptotic behaviour of
solutions when $\Re(t_1) \gg \Re(t_2) \gg \cdots \gg \Re(t_N) \gg 0$.
It is this asymptotic analysis that relates the integral solutions of the present paper to power series solutions of boundary qKZ equations, as studied before in, e.g., \cite{vMS} and \cite{S}.

\subsection{Open problems}
It would be very interesting to understand the precise relation between the integral solutions and the solutions considered in \cite{JKKKM,JKKMW} in terms of matrix elements of vertex operators with respect to so-called boundary states,
as well as the Jackson integral solutions from \cite{RSV,RSV2}. See \cite{KN}
for some results in this direction for type A.
Another important problem is to verify the compatibility of these solutions $\Psi_{\bm k}(\bm t)$ with fusion (cf. \cite[Sec. 8]{RSV2}) and study their classical and rational limits.

Among other open problems it would be a natural continuation of present work to find integral solutions for the boundary qKZ equations \eqref{bqkz-int} for different R- and K-matrices.
It would be nice in all these cases to construct a basis of solutions, to understand their representation-theoretical meaning and to compare with special cases known from physics literature.

\subsection{Outline}
In Section \ref{sec:integrabledata} we review the construction of 
the boundary qKZ equations for tensor products of Verma modules over 
quantum $\mathfrak{sl}_2$. We restrict throughout the paper
to the special case that the associated solutions of the reflections equations
are diagonal with respect to the standard basis of the tensor product of Verma
modules. 
Section \ref{sec:integralsolutions} is the heart of the paper: here we introduce the building blocks necessary for our solutions, namely weight functions defined in terms of q-shifted factorials, the boundary Bethe vectors and the integration contours. 
We also define the integral solutions 
and state the main result of this paper, namely, that for a fixed total weight
the integral solutions form a basis
of the meromorphic solutions to \eqref{bqkz-int} over the field of 
$\tau \Z^N$-periodic meromorphic functions, taking values in the correponding (finite-dimensional) total weight space of 
the tensor product of Verma modules.

The proof of the main statement is spread out over Section \ref{sec:proof}.
In Section \ref{sec:findim} we consider the boundary qKZ equations for functions taking values in the tensor product of finite-dimensional modules over quantum affine $\mathfrak{sl}_2$.
The appendices \ref{app:bqKZasymptotics} and \ref{app:Bethevectors} provide technical statements underpinning the proof of the main result. 

\subsection{Acknowledgements}

The research of N.R. was supported by the Chern-Simons chair and by the NSF grant DMS-1201391; he also acknowledges support from QGM at Aarhus, where an important part of the work has been done.
B.V. is grateful to the University of California and the University of Amsterdam for hospitality; his work was supported by an NWO free competition grant and EPSRC grant EP/L000865/1.

\section{Representation theory and integrable data} \label{sec:integrabledata}

Here we will use conventions from \cite[Section 2.1]{RSV2} and refer to this publication for more detail and references.

\subsection{The quantum affine algebra $\mathcal{U}_\eta(\widehat{\mathfrak{sl}}_2)$ and the universal R-matrix} \label{sec:uniR}
Let $\eta\in\mathbb{C}$ such that $\e^\eta$ is not a root of unity and
write
\begin{equation*}
\left(\begin{matrix} a_{00} & a_{01}\\ a_{10} & a_{11}\end{matrix}\right)=
\left(\begin{matrix} 2 & -2\\ -2 & 2\end{matrix}\right).
\end{equation*}
We start with the unital 
Hopf algebra $\widehat{\mathcal{U}}_\eta:=
\mathcal{U}_\eta(\widehat{\mathfrak{sl}}_2)$ with deformation parameter 
$\e^\eta$ and generators $e_0$, $e_1$, $f_0$, $f_1$ and $\e^h$ 
($h\in\mathfrak{h}:=\mathbb{C}h_0\oplus\mathbb{C}h_1$) with
defining relations 
\begin{equation*}
\begin{split}
&\e^0=1,\qquad \e^{h+h^\prime}=\e^h \e^{h^\prime},\\
&\e^h e_i \e^{-h} = \e^{\alpha_i(h)} e_i, \qquad \e^h f_i \e^{-h} = \e^{-\alpha_i(h)} f_i,
\qquad 
\lbrack e_i,f_j \rbrack = \delta_{i,j} \frac{\e^{\eta h_i}-\e^{-\eta h_i}}
{\e^{\eta}-\e^{-\eta}}
\end{split}
\end{equation*}
for $h,h^\prime\in\mathfrak{h}$ and $i,j=0,1$,
and with the Serre relations
\[
\begin{array}{r}
e_i^3e_j-(\e^{2\eta}+1+\e^{-2\eta})e_i^2e_je_i+(\e^{2\eta}+1+\e^{-2\eta})e_ie_je_i^2-e_je_i^3 =0 \\[3pt]
f_i^3f_j-(\e^{2\eta}+1+\e^{-2\eta})f_i^2f_jf_i+(\e^{2\eta}+1+\e^{-2\eta})f_if_jf_i^2-f_jf_i^3  =0
\end{array}
\]
for $i\not=j$. Here the roots 
$\alpha_i\in\mathfrak{h}^*$ are defined by $\alpha_i(h_j):=a_{ij}$,

The coproduct $\Delta$ and the counit $\epsilon$ are determined by their action on generators:
\begin{equation*}
\begin{split}
\Delta(\e^h)&=\e^h\otimes \e^h,\\
\Delta(e_i)&=e_i\otimes 1+\e^{-\eta h_i}\otimes e_i,\\
\Delta(f_i)&=f_i\otimes \e^{\eta h_i}+1\otimes f_i
\end{split}
\end{equation*}
and
\begin{equation*}
\epsilon(\e^h)=1,\quad \epsilon(e_i)=0,\quad \epsilon(f_i)=0
\end{equation*}
for $h\in\mathfrak{h}$ and $i=0,1$.

Let $\widetilde{\mathcal{U}}_\eta$ be the extension of 
$\widehat{\mathcal{U}}_\eta$ with generators $\e^{xd}$ ($x\in\mathbb{C}$)
satisfying $\lbrack \e^h,\e^{xd}\rbrack=0$,
$\e^{xd}e_i=\e^{x\delta_{i,0}}e_i\e^{xd}$ and $\e^{xd}f_i=\e^{-x\delta_{i,0}}f_i\e^{xd}$.
Then $\widetilde{\mathcal{U}}_\eta$ turns into a quantized Kac-Moody algebra
in a natural way (see \cite[Chpt. 9]{EFK}). 
In particular, there is a universal R-matrix $R \in \widetilde{\mathcal{U}}_\eta \widehat{\otimes} \widetilde{\mathcal{U}}_\eta$
\cite{Dr,EFK,FR}, where $\widehat{\otimes}$ is suitably completed tensor product. 
The universal R-matrix has the form
\[
R =\exp(\eta (c\otimes d+d\otimes c)) \mathcal{R}
\]
where $c=h_0+h_1$ and $\mathcal{R} \in \widehat{\mathcal{U}}_\eta \widehat{\otimes} \widehat{\mathcal{U}}_\eta$ is the ``truncated universal R-matrix''.
We denote the opposite coproduct by $\Delta^{\rm op}$.
In the category of modules where $c$ acts by zero (zero-level representations), $\mathcal{R}$ satisfies all properties of the universal R-matrix:
\begin{gather*}
\mathcal{R} \Delta(a)=\Delta^{\rm op}(a) \mathcal{R}, \\
(\Delta \otimes \id)(\mathcal{R})= \mathcal{R}_{13} \mathcal{R}_{23} , \ (\id \otimes \Delta)(\mathcal{R})=\mathcal{R}_{13} \mathcal{R}_{12}
\end{gather*}

\subsection{Evaluation representations of quantum affine $\mathfrak{sl}_2$ }
$\widehat{\mathcal{U}}_\eta$ contains quantum $\mathfrak{sl}_2$  as the Hopf subalgebra $\mathcal{U}_\eta = \langle e_1, f_1, \e^{z  h_1} \rangle$.
For $\ell \in \C$, define a representation $\pi^\ell$ of $\mathcal{U}_\eta$ on $V^\ell = \bigoplus_{d=0}^\infty \C v^\ell_d$ by means of
\begin{align*}
\pi^\ell(\e^{z h_1})(v^{\ell}_d) &= \e^{2 (\ell-d)z} v^\ell_d, \\
\pi^\ell(e_1)(v^{\ell}_d) &= \frac{\sinh(d\eta)\sinh((2\ell+1-d)\eta)}{\sinh(\eta)^2} v^{\ell}_{d-1}, \qquad \text{with } v^{\ell}_{-1} := 0, \\
\pi^\ell(f_1)(v^{\ell}_d) &= v^{\ell}_{d+1},
\end{align*}
for $d \in \Z_{\geq 0}$.
The $\mathcal{U}_\eta$-module $(\pi^\ell,V^\ell)$ is the Verma module with highest weight $\ell$ and highest weight vector $v^\ell_0$.

Precisely if $\ell \in \frac{1}{2} \Z_{\geq 0}$, the maximal
$\mathcal{U}_\eta$-submodule $W^\ell$ is given by 
$\oplus_{d=2\ell+1}^\infty \C v^\ell_d$. We write 
$\widebar{V}^\ell := V^\ell / W^\ell$ for the 
irreducible finite-dimensional quotient.
Denote by $\pr^{\ell}$ the canonical map: $V^\ell \twoheadrightarrow \widebar{V}^\ell$.
The cosets
\[ \widebar{v}^\ell_d := \pr^{\ell}(v^\ell_d) = v^\ell_d + W^\ell \qquad \text{for } d \in \{0,1,\ldots,2\ell \}\]
form a weight basis of $\widebar{V}^\ell$.
If $\ell = \tfrac{1}{2}$ we write $\widebar{V} = \widebar{V}^{\frac{1}{2}}$ and $\widebar{v}_d = \widebar{v}^{\frac{1}{2}}_d$ for $d \in \{0,1\}$.

For each $x \in \C$ there exists a unique unit-preserving algebra homomorphism $\phi_x: \widehat{\mathcal{U}}_\eta \to \mathcal{U}_\eta$ satisfying
\begin{align*}
\phi_x(e_0) &= \e^{-x} f_1, & \phi_x(e_1) &= \e^{-x} e_1, \\
\phi_x(f_0) &= \e^x e_1, & \phi_x(f_1) &= \e^x f_1, \\
\phi_x(\e^{zh_0}) &= \e^{-zh_1}, & \phi_x(\e^{zh_1}) &= \e^{zh_1},
\end{align*} 
using which we define the evaluation representation $\pi^{\ell}_x = 
\pi^\ell \circ \phi_x: \widehat{\mathcal{U}}_\eta \to \End(V^\ell)$.

\begin{rema}
In \cite{RSV2} we wrote $M^\ell$ instead of $V^\ell$ and $\{ m^\ell_1, \ldots, m^\ell_{2\ell+1} \}$ instead of $\{ v^\ell_0, \ldots, v^\ell_{2\ell} \}$. Also, $V^\ell$ in \cite{RSV2} corresponds to the present notation $\widebar{V}^\ell$. 
The purpose of this is to have simplified notation for infinite-dimensional objects (by placing a bar over the symbol to denote the corresponding finite-dimensional objects) as we will focus on these in most of this paper. 
\end{rema}

\subsection{R-matrices}  \label{sec:Rmatrices}
We now pass from the universal R-matrix to its image under the evaluation representation. For details see e.g. \cite{FR}.
Let $\ell_1,\ell_2 \in \C$ and let $R^{\ell_1 \, \ell_2}(x-y) \in \End(V^{\ell_1}\otimes V^{\ell_2})$ be the scalar multiple of 
$(\pi^{\ell_1}_x \otimes \pi^{\ell_2}_y)(\mathcal{R})$ normalized by the condition
\[
R^{\ell_1 \, \ell_2}(x)(v^{\ell_1}_0 \otimes v^{\ell_2}_0)=v^{\ell_1}_0 \otimes v^{\ell_2}_0.
\]
Then $R^{\ell_1 \, \ell_2}(x)$ satisfies the Yang-Baxter equation \eqref{YBE}.
Furthermore, it satisfies unitarity, 
\begin{equation} \label{unitarity}
R^{\ell_1 \, \ell_2}(x)^{-1} = P^{\ell_2 \, \ell_1} R^{\ell_2 \, \ell_1}(-x) P^{\ell_1 \, \ell_2}
\end{equation}
as well as $P$-symmetry,
\begin{equation} \label{Psymm}
 P^{\ell_2 \, \ell_1} R^{\ell_2 \, \ell_1}(x)  P^{\ell_1 \ell_2} = R^{\ell_1 \ell_2}(x).
\end{equation}
See, for example, \cite[Lemma 2.1]{RSV2} for a proof of $P$-symmetry.
Another important property is the \emph{(higher-spin) ice rule} according to 
which $R^{\ell_1\ell_2}(x)$ preserves the total weight spaces 
\begin{equation}\label{icerule}
\begin{split}
(V^{\ell_1}\otimes V^{\ell_2})(M):=&\{v\in V^{\ell_1}\otimes V^{\ell_2} \,\, | \,\,
(\pi^{\ell_1}\otimes\pi^{\ell_2})(\Delta(\e^{h_1}))v=
\e^{2(\ell_1+\ell_2-M)}v\}\\
=&\textup{span}\{ v_{d_1}^{\ell_1}\otimes v_{d_2}^{\ell_2}\,\, | \,\,
d_1,d_2\in\mathbb{Z}_{\geq 0}\,\&\,\, d_1+d_2=M\}
\end{split}
\end{equation}
for all $M\in\mathbb{Z}_{\geq 0}$.

In the case where $\ell_1 \in \tfrac{1}{2}\Z_{\geq 0}$, there is a unique linear
operator $L^{\ell_1,\ell_2}(x)$ on 
$\widebar{V}^{\ell_1} \otimes V^{\ell_2}$ satisfying
\begin{equation} \label{Ldefn} (\pr^{\ell_1} \otimes \Id_{V^{\ell_2}}) R^{\ell_1\, \ell_2}(x) = L^{\ell_1\, \ell_2}(x)  (\pr^{\ell_1} \otimes \Id_{V^{\ell_2}}). \end{equation}
Furthermore, if $\ell_1,\ell_2 \in \tfrac{1}{2}\Z_{\geq 0}$, there is a unique 
linear operator $\widebar{R}^{\ell_1,\ell_2}(x)$ on 
$\widebar{V}^{\ell_1} \otimes \widebar{V}^{\ell_2}$ satisfying
\begin{equation} \label{Rdefn} (\pr^{\ell_1} \otimes \pr^{\ell_2}) R^{\ell_1\, \ell_2}(x) = \widebar{R}^{\ell_1\, \ell_2}(x)  (\pr^{\ell_1} \otimes  \pr^{\ell_2}). \end{equation}

In particular, we have the spin-half R-matrix $\widebar{R}(x):=\widebar{R}^{\frac{1}{2} \, \frac{1}{2}}(x)$ satisfying, for $d_1,d_2 \in \{0,1\}$,
\begin{equation} \label{Rspinhalf} \widebar{R}(x)(\widebar{v}_{d_1} \otimes \widebar{v}_{d_2} ) = \begin{cases} \widebar{v}_{d_1} \otimes \widebar{v}_{d_2}, & \text{if } d_1=d_2, \\ \frac{\sinh(x)}{\sinh(x+\eta)} \widebar{v}_{d_1} \otimes \widebar{v}_{d_2} +  \frac{\sinh(\eta)}{\sinh(x+\eta)} \widebar{v}_{d_2} \otimes \widebar{v}_{d_1}, & \text{if } d_1\ne d_2.  \end{cases} \end{equation}
The operators $\widebar{R}^{\ell_1 \, \ell_2}(x)$ can be recursively defined in terms of $\widebar{R}(x)$ through what is known as R-matrix fusion, see \cite[Sections 3.2 and 3.3]{RSV2} and references therein.

Given $\ell \in \C$ we also have the spin-half L-operators 
$L^\ell(x) := L^{\frac{1}{2}\, \ell}(x)$, 
meromorphically depending on $x \in \C$; from \eqref{YBE}, \eqref{Ldefn} and \eqref{Rdefn} it follows that they satisfy
\begin{equation}
\label{RLL} \widebar{R}_{00'}(x-y) L^\ell_0(x;\bm t) L^\ell_{0'}(y;\bm t) = L^\ell_{0'}(y;\bm t) L^\ell_0(x;\bm t) \widebar{R}_{00'}(x-y),
\end{equation}
an identity for operators in $\widebar{V} \otimes \widebar{V} \otimes V^\ell$, with the first copy of $\widebar{V}$ labelled 0 and the second labelled $0'$.
They also satisfy \emph{crossing symmetry}
\begin{equation} \label{Lcrossing}
L^\ell(x) = \frac{\sinh(x+(\tfrac{1}{2}-\ell)\eta)}{\sinh(x+(\tfrac{1}{2}+\ell)\eta)} \siy_0 L^\ell(-x-\eta)^{t_0} \siy_0,
\end{equation}
an identity for operators in $\widebar{V} \otimes V^\ell$, with $\widebar{V}$ labelled 0, $\siy = \left( \begin{smallmatrix} 0 & -\sqrt{-1} \\ \sqrt{-1} & 0 \end{smallmatrix} \right)$ and $t_0$ transposition relative to $\widebar{V}$, cf. \cite[Eq. (2.8)]{RSV2}.

\begin{rema}
We emphasize that the notation $R^{\ell_1,\ell_2}(x)$ in this paper corresponds to $\mathcal{R}^{\ell_1,\ell_2}(x)$ in \cite{RSV2} whereas the present notation $\widebar{R}^{\ell_1,\ell_2}(x)$ is called $R^{\ell_1,\ell_2}(x)$ in \cite{RSV2}.
\end{rema}

\subsection{K-matrices} \label{sec:Kmatrices}
We highlight here the key points from \cite[Section 4]{RSV2}, which the reader may consult for more details and references.
Let $R^{\ell_1\ell_2}(x)$ be as in Section \ref{sec:Rmatrices}. 
Because of P-symmetry \eqref{Psymm}, both reflection equations \eqref{REs} simplify to
\begin{equation} \label{RE}
\begin{aligned}
& R^{\ell_1 \ell_2}(x-y) K^{\ell_1}_{1}(x) R^{\ell_1 \ell_2}(x+y) K^{\ell_2}_{2}(y) = \\
& \qquad K^{\ell_2}_{2}(y) R^{\ell_1 \ell_2}(x+y) K^{\ell_1}_{1}(x) R^{\ell_2 \ell_1}(x-y).
\end{aligned}
\end{equation}
There is a one-parameter family $K^\ell(x;\xi) \in \End(V^{\ell})$ of solutions to \eqref{RE} which are diagonal in the weight basis.
They act on arbitrary weight vectors according to
\begin{equation}\label{Kdiagonal}
K^{\ell}(x;\xi) v_d^\ell = \left( \prod_{j=1}^{d}\frac{\sinh(\xi-x+(\ell+\frac{1}{2}-j)\eta)}{\sinh(\xi+x+(\ell+\frac{1}{2}-j)\eta)} \right) v_d^\ell,
\end{equation}
where $d \in \mathbb{Z}_{\geq 0}$.
In particular, they satisfy
\[
K^{\ell}(x;\xi)(v^\ell_0) = v^\ell_0
\]
and the unitarity condition
\[
K^{\ell}(x;\xi)^{-1} =  K^{\ell}(-x;\xi).
\]

For $\ell \in \tfrac{1}{2} \Z_{\geq 0}$, the natural projection $\pr^\ell$ to the quotient representation $\widebar{V}^\ell$ applied to $K^\ell(x;\xi)$ gives the corresponding solution $\widebar{K}^{\ell}(x;\xi) \in \End(\widebar{V}^\ell)$ to \eqref{RE}; furthermore we write $\widebar{K}(x;\xi) = \widebar{K}^{1/2}(x;\xi)$ as for the R-matrix.
The operators $\widebar{K}^{\ell}(x;\xi)$ can be recursively defined in terms of $\widebar{R}(x)$ and $\widebar{K}(x;\xi)$ through what is known as K-matrix (bulk-boundary) fusion, see e.g. \cite{MN}.

\begin{rema}
Similar to the R-matrices, the notation $K^\ell(x)$ in this paper corresponds to $\mathcal{K}^\ell(x)$ in \cite{RSV2} whereas the present notation $\widebar{K}^\ell(x)$ is called $K^\ell(x)$ in \cite{RSV2}.
\end{rema}

\subsection{Tensor products of evaluation representations}
Let $N \in \Z_{\geq 0}$ and fix $\bm \ell = (\ell_1,\ldots,\ell_N) \in \C^N$.
We will be considering linear operators on 
\[
V^{\bm \ell} := V^{\ell_1} \otimes \cdots \otimes V^{\ell_N}
\]
and write an arbitrary element of its natural basis as $v^{\bm \ell}_{\bm d}=v^{\ell_1}_{d_1} \otimes \cdots \otimes v^{\ell_N}_{d_N}$, where $\bm d= (d_1,\ldots,d_N) \in \Z_{\geq 0}^N$.
Taking into account the weight decomposition 
$V^{\ell}=\oplus_{d\geq 0} \C v^\ell_d$ with respect to the action of $\e^{h_1}$
we have the total weight decomposition
\[
V^{\bm \ell}=\bigoplus_{M =0}^\infty  V^{\bm \ell}(M)
\]
with
\begin{equation*}
\begin{split}
V^{\bm \ell}(M)&:=\bigoplus_{\bm d \in P_{N}(M)} \C v^{\bm \ell}_{\bm d},\\
P_{N}(M)&:=\{ \bm d = (d_1,\ldots,d_N) \in \Z^N_{\geq 0} \, | \, \sum_{s=1}^N d_s = M \}
\end{split}
\end{equation*}
(compare with 
\eqref{icerule}, which is the special case $N=2$). 

The tensor product basis $\{v_{\bm d}^{\bm\ell}\}_{\bm d\in P_N(M)}$
of the weight subspaces $V^{\bm \ell}(M)$ can be labelled in another natural way.
For $M,N \in \Z_{\geq 0}$, define
\begin{equation}\label{IMN}
I_{M,N} = \{ (k_1,\ldots,k_M) \in \{1,\ldots,N\}^M \, | \, 
k_1 \leq k_2 \leq \ldots \leq k_M \}.
\end{equation}
The following map is a bijection between $P_N(M)$ and $I_{M,N}$:
\[
\zeta_{M,N}: \; I_{M,N} \stackrel{\sim}{\to} P_{N}(M) : \;  \bm k \mapsto (n_{\bm k}(1),\ldots,n_{\bm k}(N)),
\]
where we have introduced the notation
\begin{equation}
\label{ndefn}
\hspace{-4pt} n_{\bm k}(r) := \# \{ i \in \{1,\ldots,M\} \, | \, k_i=r \} 
\end{equation}
for $\bm k \in \{1,\ldots,N\}^M$ and $r \in \{1,\ldots,N\}$. 
The fact that $\zeta_{M,N}$ is a bijection is clear if we write an $N$-tuple from $I_{M,N}$ as follows:
\[
(k_1,\dots, k_M)=(\underbrace{1,\dots, 1}_{n_{\bm k}(1)},\underbrace{2,\dots, 2}_{n_{\bm k}(2)}, \underbrace{3,\dots, 3}_{n_{\bm k}(3)}, \dots, \underbrace{N,\dots, N}_{n_{\bm k}(N)}).
\]
Note also that $\sum_{r=1}^N n_{\bm k}(r) = M$ for all $\bm k \in I_{M,N}$.
Using $\zeta_{M,N}$ we can parametrize the tensor product basis of
the weight subspace $V^{\bm \ell}(M)$ by elements in $I_{N,M}$:
\[
V^{\bm \ell}(M) = \bigoplus_{\bm k \in I_{M,N}} \C \Omega_{\bm k}  \]
where
\[
\Omega_{\bm k} := v_{\zeta_{M,N}(\bm k)}^{\bm\ell}=
v^{\ell_1}_{n_{\bm k}(1)}\otimes \dots \otimes v^{\ell_N}_{n_{\bm k}(N)}.
\]

\subsection{Boundary qKZ equations}
Fix $\bm \ell \in \C^N$ and $\tau,\xi_+,\xi_- \in \C$.
Given the R- and K-matrices defined in 
Sections \ref{sec:Rmatrices} and \ref{sec:Kmatrices}, 
the spin-$\bm \ell$ boundary qKZ equations are the following equations for meromorphic $V^{\bm\ell}$-valued functions $\Psi(\bm t)$ in $\bm t\in\C^N$:
\begin{equation} \label{bqKZ}
\Psi(\bm t+ \tau \bm e_r) = \mathcal{A}_r(\bm t) \Psi(\bm t), \qquad r\in\{1,\ldots,N\},
\end{equation}
with $\{\bm e_r\}_{r=1}^N$ the standard othonormal basis of $\C^N$ and the transport operators given by 
\begin{equation}
\label{bqKZtransportmatrix}
\begin{aligned}
\mathcal{A}_r(\bm t) &= R^{\ell_r \, \ell_{r+1}}_{r \, r+1}(t_r-t_{r+1}+\tau) \cdots R^{\ell_r \, \ell_N}_{r \, N}(t_r-t_N+\tau)  \\
 & \quad \times K^{\ell_r}_r(t_r+\tfrac{\tau}{2};\xi_+) R^{\ell_N \, \ell_r}_{N \, r}(t_N+t_r) \cdots R^{\ell_{r+1} \, \ell_r}_{r+1 \, r}(t_{r+1}+t_r) \\
 & \quad \times R^{\ell_{r-1} \, \ell_r}_{r-1 \, r}(t_{r-1}+t_r) \cdots R^{\ell_1 \, \ell_r}_{1 \, r}(t_1+t_r) K^{\ell_r}_r(t_r;\xi_-) \\
 & \quad \times \bigl( R^{\ell_1 \, \ell_r}_{1 \, r}(t_1-t_r) \bigr)^{-1} \cdots \bigl(R^{\ell_{r-1} \, \ell_r}_{r-1 \, r}(t_{r-1}-t_r) \bigr)^{-1} \hspace{-15pt}
\end{aligned}
\end{equation}
for $r \in \{1,\ldots,N\}$.
The Yang-Baxter equation \eqref{YBE} and the reflection equation \eqref{RE} yield the consistency conditions
\[ \mathcal{A}_r(\bm t+\tau \bm e_s)\mathcal{A}_s(\bm t)=\mathcal{A}_s(\bm t + \tau \bm e_r)\mathcal{A}_r(\bm t) \qquad \text{for } r,s \in \{1,\ldots,N\}. \]
When $\ell_s \in \frac{1}{2}\mathbb{Z}_{\geq 0}$ the equation projects to the corresponding quotient space $V^{\ell_1} \otimes \cdots \otimes V^{\ell_{s-1}} \otimes \widebar{V}^{\ell_s} \otimes V^{\ell_{s+1}} \otimes \cdots \otimes V^{\ell_N}$,
see \cite{RSV2}.\\

The finite-dimensional subspaces $V^{\bm\ell}(M)\subseteq V^{\bm\ell}$ ($M\geq 0$) are invariant subspaces for the transport operators $\mathcal{A}_r(\bm t)$ ($r=1,\ldots,N$) since the $R$-matrices satisfy the ice rule and since the $K$-matrices act diagonally with respect to the 
weight basis of the Verma module.
It follows that the meromorphic solutions of \eqref{bqKZ} are of the form 
$\Psi=\sum_{M\geq 0}\Psi_M$ with $\Psi_M$ a meromorphic 
$V^{\bm\ell}(M)$-valued solution of \eqref{bqKZ}. 
In the remainder of the paper we therefore focus on the construction of a basis of $V^{\bm\ell}(M)$-valued meromorphic
solutions of the boundary qKZ equations \eqref{bqKZ} for a fixed value of $M$.

We now first discuss the existence of power series solutions of the boundary
qKZ equations \eqref{bqKZ}, for fixed value of $M\in\mathbb{Z}_{\geq 0}$.
Suppose $\Re(\tau)<0$ and consider the sector
\[ 
\mathbb{A} := \{ (t_1,\ldots,t_N) \in \C^N \, | \, \Re(t_1) > \Re(t_2) > \ldots > \Re(t_N) > 0 \}. 
\]
We will write ${\bm t}\stackrel{\mathbb{A}}{\rightarrow}\infty$ when
\[
\Re(t_s-t_{s+1}) \to \infty \text{ for } s \in \{1,\ldots,N-1\} \qquad \text{and} \qquad \Re(t_N) \to \infty.
\]
Note that if ${\bm t}\stackrel{\mathbb{A}}{\rightarrow}\infty$ the real
parts of the arguments of all R- and K-matrices in 
\eqref{bqKZtransportmatrix} go to $+\infty$.

In order to study the asymptotics of the bqKZ equations \eqref{bqKZ} and their solutions as ${\bm t}\stackrel{\mathbb{A}}{\rightarrow}\infty$ more precisely, we rely on appendix \ref{app:bqKZasymptotics} and \cite[\S 9.6]{EFK}.

Let $Q_+\subseteq\mathbb{R}^N$ be the cone
\begin{equation}\label{Qplus}
Q_+:=\bigoplus_{i=1}^{N-1}\mathbb{Z}_{\geq 0}(\bm e_i-\bm e_{i+1})\oplus
\mathbb{Z}_{\geq 0}\bm e_N.
\end{equation}
For $\beta=(\beta_1,\ldots,\beta_N)\in\mathbb{Z}^N$ we write 
$\e^{(\beta,\bm t):}=\prod_{i=1}^N \e^{\beta_it_i}$. 
Let $r\in\{1,\ldots,N\}$. 
It follows from \cite[\S 9.6]{EFK} and the explicit form of the diagonal $K$-matrices that 
there exist $\mathcal{A}_{\alpha;r} \in \End(V^{\bm \ell}(M))$ for 
$\alpha \in Q^+$ such that
\[
\mathcal{A}_{r}(\bm t)=
\sum_{\alpha\in Q_+}\mathcal{A}_{\alpha;r} \e^{-(\alpha,\bm t)}
\]
as linear operator on $V^{\bm\ell}(M)$, with the power series converging normally on compact sets for $\bm t$ deep enough in the sector $\mathbb{A}$. 
Write $\bm 0 = (0,\ldots,0) \in \C^N$.
Lemma \ref{lem:Ainfinity} implies that $\mathcal{A}_{\bm 0;r}$ acts on $V^{\bm \ell}(M)$ by the formula
\begin{equation}\label{Ainfty}
\mathcal{A}_{\bm 0;r}\Omega_{\bm m}=\varphi_{\bm m;r}\Omega_{\bm m},\qquad
\bm m\in I_{M,N},
\end{equation}
where
\begin{equation} \label{varphidefn}
\varphi_{\bm m;r} := 
\Biggl( \prod_{i=1 \atop m_i=r}^M  
\e^{2(\eta - \xi_+ - \xi_-) +4(M-i-\sum_{s \geq r} \ell_s ) \eta}  \Biggr) 
\e^{-4 \ell_r \#\{i\in \{1,\ldots,M\}|m_i>r\}\eta }.
\end{equation}

Let $\Phi_{\bm k}$ be a nonzero meromorphic function on $\C^N$ satisfying the 
scalar difference equations
\begin{equation} \label{Phifunceqn}
\Phi_{\bm k}(\bm t + \tau \bm e_r) = \varphi_{\bm k;r} \Phi_{\bm k}(\bm t), \qquad 
r \in \{1 ,\ldots, N\}.
\end{equation}
Then it follows that 
\[
\Psi_{\bm k}^{\infty}(\bm t):=\Phi_{\bm k}(\bm t)\Omega_{\bm k}
\]
satisfies the asymptotic boundary qKZ equations
\[
\Psi_{\bm k}^{\infty}(\bm t+\tau \bm e_r)=
\mathcal{A}_{\bm 0;r}\Psi_{\bm k}^{\infty}(\bm t),
\qquad r=1,\ldots,N. 
\]
So it then makes sense to look for power series solutions
$\Psi_{\bm k}(\bm t)$ of \eqref{bqKZ} tending to $\Psi_{\bm k}^{\infty}(\bm t)$
as ${\bm t}\stackrel{\mathbb{A}}{\rightarrow}\infty$.

It is easy to construct an explicit solution $\Phi_{\bm k}$ of
\eqref{Phifunceqn} 
as a quotient of products of renormalized theta functions. We first need to
introduce some notations. 
We write $q=\e^\tau$ and we will assume $\Re(\tau) <0$, so that $0<|q|<1$.
The $q^2$-shifted factorial 
\[ (z;q^2)_\infty := \prod_{m \geq 0} (1-zq^{2m}) 
\]
is a holomorphic function of $z$ satisfying the property 
$(q^2z;q^2)_\infty = (1-z)^{-1} (z;q^2)_\infty$.
We will employ the notation
\[ (z_1,z_2;q^2)_\infty :=  (z_1;q^2)_\infty(z_2;q^2)_\infty.\]
The renormalized Jacobi theta function is the holomorphic function defined by
\[ \theta(z) := (z,q^2z^{-1};q^2)_\infty. \]
It satisfies the quasi-periodicity condition
$\theta(q^2 z) = -z^{-1} \theta(z)$. We will use in formulas the notation
$a(\pm x)$ to stand for $a(x)a(-x)$. 
For instance, $(\e^{\pm x};q^2)_\infty$ stands for $(\e^{x},\e^{-x};q^2)_\infty$.

An explicit nonzero meromorphic 
solution $\Phi_{\bm k}$ of \eqref{Phifunceqn} is now given by
\[
\Phi_{\bm k}(\bm t)=\prod_{r=1}^N\frac{\theta(\e^{2t_r})}
{\theta(\varphi_{\bm k;r} \e^{2t_r})}.
\]
Any other choice differs from
$\Phi_{\bm k}$ by a nonzero $\tau\mathbb{Z}^N$-periodic meromorphic function. 

If $\Psi$ is a $V^{\bm\ell}(M)$-valued meromorphic solution of the
boundary qKZ equations \eqref{bqKZ} and $\bm k\in I_{M,N}$ is fixed, then
\[
\widetilde{\Theta}_{\bm k}:=\Phi_{\bm k}^{-1}\Psi
\] 
is a $V^{\bm\ell}(M)$-valued meromorphic solution of the rescaled boundary
qKZ equations
\begin{equation}\label{rbqKZ}
\widetilde{\Psi}(\bm t+\tau \bm e_r) = 
\widetilde{\mathcal{A}}_{r}^{\bm k}(\bm t)\widetilde{\Psi}(\bm t), \qquad 
r\in\{1,\ldots,N\},
\end{equation}
with rescaled transport operators
$\widetilde{\mathcal{A}}_{r}^{\bm k}(\bm t):=
\varphi_{\bm k;r}^{-1}\mathcal{A}_r(\bm t)$. Now the leading coefficient 
$\widetilde{\mathcal{A}}_{\bm 0;r}^{\bm k}$ of
the power series expansion 
\[
\widetilde{\mathcal{A}}_r^{\bm k}(\bm t)=
\sum_{{\alpha}\in Q_+}\widetilde{\mathcal{A}}_{{\alpha};r}^{\bm k} \e^{-({\alpha},\bm t)}
\]
is acting on $V^{\bm\ell}(M)$ by  
\[
\widetilde{\mathcal{A}}_{\bm 0;r}^{\bm k}\Omega_{\bm m}=  \frac{\varphi_{\bm m;r}}{\varphi_{\bm k;r}} \Omega_{\bm m},\qquad
\bm m\in I_{M,N}.
\]
Then \cite[Appendix]{vMS} guarantees, for generic parameters, 
the existence and uniqueness of a $V^{\bm\ell}(M)$-valued meromorphic solution
$\widetilde{\Theta}_{\bm k}$ of the rescaled boundary qKZ equations \eqref{rbqKZ}
such that
\[
\widetilde{\Theta}_{\bm k}(\bm t)=\sum_{{\alpha}\in Q_+}
\widetilde{L}_{{\alpha}}^{\bm k} \e^{-({\alpha},\bm t)},\qquad 
\widetilde{L}_{{\alpha}}^{\bm k}\in V^{\bm\ell}(M)
\] 
for $\bm t$ deep enough in the sector $\mathbb{A}$, with the $V^{\bm\ell}(M)$-valued power series normally converging on compact sets
and with leading coefficient 
\[
\widetilde{L}_0^{\bm k}=\Omega_{\bm k}.
\]
Our main goal is to find an explicit integral expression of 
$\widetilde{\Theta}_{\bm k}$ deep enough in the sector $\mathbb{A}$. 

\section{Integral solutions of the boundary qKZ equations} \label{sec:integralsolutions}
We will exhibit $V^{\bm\ell}(M)$-valued solutions $\Psi_{\bm k}(\bm t)$ of 
\eqref{bqKZ} for $\bm k = (k_1,\ldots,k_M) \in I_{M,N}$
 admitting an integral representation of the form 
\[
\Psi_{\bm k}(\bm t)=
\int_{C_{\bm k}(\bm t)} w_{\bm k}(\bm x;\bm t) 
\widetilde{\mathcal{B}}(\bm x;\bm t) \Omega \d^M\bm x, 
\]
on some subsector $\widetilde{\mathbb{A}} \subset \mathbb{A}$.
We will show that, up to an explicit multiplicative constant,
$\Psi_{\bm k}$ equals $\Phi_{\bm k}\widetilde{\Theta}_{\bm k}$,
which provides the link with the power series solutions of the boundary
qKZ equations from the previous subsection.

The Bethe vectors $\widetilde{\mathcal{B}}(\bm x;\bm t) \Omega$ are 
elements of $V^{\bm \ell}(M)$ with trigonometric dependence on $\bm x\in\C^M$
and $\bm t\in\C^N$. They will be discussed in Subsection \ref{sec:Bethevectors}.
The scalar weight functions $w_{\bm k}(\bm x;\bm t)$ 
will be defined in Subsection \ref{sec:weightfunctions}.
We will specify the integration contour $C_{\bm k}(\bm t)$ in Subsection \ref{sec:contours} before stating the main theorem in Subsection \ref{sec:main}. 

\subsection{Bethe vectors} \label{sec:Bethevectors}
Like the qKZ transport operators $\mathcal{A}_r(\bm t)$, the objects $\widetilde{\mathcal{B}}(\bm x;\bm t)$ are linear operators on $V^{\bm \ell}$ constructed in terms of the R- and K-matrices introduced in Section \ref{sec:integrabledata}, but according to a different procedure, first conceived for quantum integrable systems with reflecting boundaries by Sklyanin \cite{Sk}.
We first introduce linear operators acting on $\widebar{V} \otimes V^{\bm \ell}$, where the solitary tensor factor $\widebar{V} \cong \C^2$ is called 
auxiliary space and is labelled 0.

Recall the L-operators $L^\ell(x)=L^{\frac{1}{2}\ell}(x) \in \End(\widebar{V} \otimes V^\ell)$ defined through \eqref{Ldefn}.
Fix $\bm \ell=(\ell_1,\ldots,\ell_N) \in \C^N$.
Define the (type A) monodromy matrix
\[ T(x;\bm t) = L^{\ell_1}_{0 \, 1}(x-t_1) \cdots L^{\ell_N}_{0 \, N}(x-t_N) \in \End(\widebar{V} \otimes V^{\bm \ell}). \]
From \eqref{RLL} it follows that the $T(x;\bm t)$ satisfy
\begin{equation}
\label{RTT} \widebar{R}_{00'}(x-y) T_0(x;\bm t) T_{0'}(y;\bm t) = T_{0'}(y;\bm t) T_0(x;\bm t) \widebar{R}_{00'}(x-y),
\end{equation}
an identity in $\End(\widebar{V} \otimes \widebar{V} \otimes V^{\bm \ell})$, with the first copy of $\widebar{V}$ labelled 0 and the second labelled $0'$.
Fix $\xi_- \in \C$ and define
\[ \mathcal{T}(x;\bm t) =  T(-x;\bm t)^{-1} \bigl( \widebar{K}(x;\xi_-) \otimes \Id_{V^{\bm \ell}} \bigr) T(x;\bm t)  \in \End(\widebar{V} \otimes V^{\bm \ell}). \]
Given that $T(x;\bm t)$ satisfies \eqref{RTT} and $\widebar{K}(x;\xi_-)$ satisfies the reflection equation in $\End(\widebar{V} \otimes \widebar{V})$, it can be straightforwardly verified that $ \mathcal{T}(x;\bm t)$ satisfies the reflection equation in $\End(\widebar{V} \otimes \widebar{V} \otimes V^{\bm \ell})$:
\begin{equation} \label{RTRT}
\begin{gathered}
\widebar{R}_{00'}(x-y) \mathcal{T}_0(x;\bm t) \widebar{R}_{00'}(x+y)  \mathcal{T}_{0'}(y;\bm t) =\\
 \mathcal{T}_{0'}(y;\bm t) \widebar{R}_{00'}(x+y) \mathcal{T}_0(x;\bm t) \widebar{R}_{00'}(x-y) .
\end{gathered}
\end{equation}
We introduce an operator $\mathcal{B}(x;\bm t) \in \End(V^{\bm \ell})$ by means of
\begin{equation} \label{bB0defn} \mathcal{T}(x;\bm t)  = \begin{pmatrix} \ast & \mathcal{B}(x;\bm t) \\ \ast & \ast \end{pmatrix}, \end{equation}
i.e. for all $u \in V^{\bm \ell}$ we have
\[ \mathcal{T}(x;\bm t)(\widebar{v}_1 \otimes u) = \widebar{v}_0 \otimes \mathcal{B}(x;\bm t)(u) + \widebar{v}_1 \otimes \text{(some element of $V^{\bm \ell}$)} .\]
In the rest of the paper we will use the shorthand notations
\[ \widetilde \xi_+ := \xi_+ - \tfrac{\eta}{2} - \tfrac{\tau}{2}, \qquad \widetilde \xi_- := \xi_- - \tfrac{\eta}{2}. \]
It is convenient, as will become apparent in Appendix \ref{app:bBethevectordecomposition}, to use a slightly modified version of $\mathcal{B}$, namely
\[ \widetilde{\mathcal{B}}(x;\bm t) :=  \biggl( \prod_{s=1}^N \frac{\sinh(t_s-x+\ell_s \eta)}{\sinh(t_s-x-\ell_s \eta)}\biggr)  \frac{\sinh(2x)}{\sinh(2x+\eta)} \frac{\sinh(\widetilde \xi_- - x)}{ \sinh(\eta)} \mathcal{B}( -x-\tfrac{\eta}{2};\bm t) . \]
Fix $M \in \Z_{\geq 0}$ and define, 
for $\bm x = (x_1,\ldots,x_M)$ and $\bm t=(t_1,\ldots,t_N)$,
\[ \widetilde{\mathcal{B}}(\bm x;\bm t) = \widetilde{\mathcal{B}}(x_1;\bm t) \cdots \widetilde{\mathcal{B}}(x_M;\bm t)\in
\textup{End}(V^{\bm\ell}) \]
(we do not specify the depth $M$ in the notation of $\widetilde{\mathcal{B}}$,
it will be clear from context).

The \emph{off-shell spin$-\bm \ell$ boundary} (or \emph{type C}) \emph{Bethe vectors} are the elements $\widetilde{\mathcal{B}}(\bm x;\bm t) \Omega \in 
V^{\bm \ell}(M)$, where $\Omega$ is the tensor product of highest weight vectors:
\[ \Omega := \Omega_{\emptyset} = v^{\ell_1}_0 \otimes \cdots \otimes v^{\ell_N}_0 \in V^{\bm \ell}.\]
One of the results of this paper is an explicit decomposition of the boundary Bethe vectors in terms of the tensor product basis of $V^{\bm \ell}(M)$.

\begin{prop} \label{thm:Bethevectordecomposition}
As meromorphic $V^{\bm\ell}(M)$-valued functions in $(\bm x,\bm t) \in \C^M\times
\C^N$, we have
\begin{equation} \label{bBdecomposition}
\widetilde{\mathcal{B}}(\bm x;\bm t) \Omega =  \sum_{\bm k \in I_{M,N} }  
\beta_{\bm k}(\bm x;\bm t) \Omega_{\bm k}
\end{equation}
where
\begin{align*}
\hspace{-9pt} \beta_{\bm k}(\bm x;\bm t)
&= \e^{\sum_i \bigl(\tfrac{n_{\bm k}(k_i)}{2} - \ell_{k_i}\bigr) \eta} 
\sum_{\bm m \in S_M\bm k}  \sum_{\bm \epsilon \in \{\pm\}^M} \Biggl( \prod_i  \frac{\epsilon_i  \sinh(\epsilon_i x_i  - \widetilde \xi_-)}{\sinh(t_{m_i}+\epsilon_i x_i  - \ell_{m_i} \eta)}  \\
& \hspace{25pt}  \times  \biggl( \prod_{s>m_i} \frac{\sinh(t_s +\epsilon_i x_i +  \ell_s \eta)}{\sinh(t_s + \epsilon_i x_i  - \ell_s \eta)} \biggr) \biggl( \prod_s \frac{\sinh(t_s - \epsilon_i x_i + \ell_s \eta)}{\sinh(t_s - \epsilon_i x_i -  \ell_s \eta)} \biggr)  \Biggr) \hspace{-10pt} \\
& \hspace{15pt}   \times \biggl( \prod_{i,j \atop i<j} \hspace{-2pt} \frac{\sinh(\epsilon_i x_i + \epsilon_j x_j + \eta)}{\sinh(\epsilon_i x_i + \epsilon_j x_j)} \biggr)  \biggl( \hspace{-2pt} \prod_{i,j \atop m_i<m_j} \hspace{-4pt} \frac{\sinh( \epsilon_i x_i - \epsilon_j x_j - \eta))}{\sinh( \epsilon_i x_i - \epsilon_j x_j)} \biggr).
\end{align*}
\end{prop}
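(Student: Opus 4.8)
The plan is to reduce the boundary (double-row) operator $\mathcal{B}(x;\bm t)$ to entries of the single-row type A monodromy matrix $T(x;\bm t)$, which act on the reference vector $\Omega$ in a controlled, triangular way. Writing $T(x;\bm t)=([T(x;\bm t)]_{ab})_{a,b\in\{0,1\}}$ in the auxiliary space and using that $\widebar K(x;\xi_-)$ is diagonal, the $(0,1)$-entry of $\mathcal{T}(x;\bm t)=T(-x;\bm t)^{-1}(\widebar K(x;\xi_-)\otimes\Id)T(x;\bm t)$ is
\[ \mathcal{B}(x;\bm t)=[T(-x;\bm t)^{-1}]_{00}\,[T(x;\bm t)]_{01}+\kappa(x)\,[T(-x;\bm t)^{-1}]_{01}\,[T(x;\bm t)]_{11}, \]
with $\kappa(x)=\sinh(\xi_--x)/\sinh(\xi_-+x)$ the eigenvalue of $\widebar K(x;\xi_-)$ on $\widebar v_1$. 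Computing the entries of $T(-x;\bm t)^{-1}$ from the quantum determinant of $T$ (equivalently from the crossing symmetry \eqref{Lcrossing} of the L-operators) shows that $[T(-x;\bm t)^{-1}]_{01}$ is, up to a scalar, the type A creation operator $[T(-x;\bm t)]_{01}$ at the reflected parameter $-x$, while $[T(-x;\bm t)^{-1}]_{00}$ acts diagonally on $\Omega$. Thus $\mathcal{B}(x;\bm t)$ is a sum of two terms, a creation at $+x$ dressed by a diagonal operator at $-x$ and a creation at $-x$ dressed by a diagonal operator at $+x$; this is the origin of the doubling $\epsilon_i\in\{\pm\}$ in the claimed formula, and the boundary factor $\sinh(\epsilon_i x_i-\widetilde\xi_-)$ together with the signs $\epsilon_i$ is produced by $\kappa$ and by the explicit scalar normalization $\sinh(\widetilde\xi_--x)/\sinh(\eta)$ built into $\widetilde{\mathcal{B}}$ (after the reparametrization $x\mapsto-x-\tfrac{\eta}{2}$).

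First I would record the triangular action on $\Omega$ of the four relevant operators (the two creation operators at $\pm x$ and the two diagonal dressings), read off from the explicit L-operators, together with the RTT commutation relations \eqref{RTT} between creation operators and diagonal operators. Then I would expand $\widetilde{\mathcal{B}}(\bm x;\bm t)\Omega=\widetilde{\mathcal{B}}(x_1;\bm t)\cdots\widetilde{\mathcal{B}}(x_M;\bm t)\Omega$ into $2^M$ terms indexed by $\bm\epsilon\in\{\pm\}^M$, one creation operator at $\epsilon_i x_i$ for each $i$. In each term I would commute all diagonal dressings to the right onto $\Omega$: moving a diagonal dressing of root $i$ past the creation operator of root $j$ produces, via \eqref{RTT}, exactly the sum-type factor $\sinh(\epsilon_i x_i+\epsilon_j x_j+\eta)/\sinh(\epsilon_i x_i+\epsilon_j x_j)$, while its eigenvalue on $\Omega$ produces the single-site ratios $\prod_{s>m_i}(\cdots)$ and $\prod_s(\cdots)$. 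What remains is a product of type A creation operators at the parameters $\{\epsilon_i x_i\}$ applied to $\Omega$, whose decomposition into the tensor basis (the type A off-shell Bethe-vector expansion developed in Appendix \ref{app:Bethevectors}) supplies the sum over $\bm m\in S_M\bm k$ and the difference-type factor $\sinh(\epsilon_i x_i-\epsilon_j x_j-\eta)/\sinh(\epsilon_i x_i-\epsilon_j x_j)$ over $m_i<m_j$. Collecting the L-operator and K-matrix scalars with the normalization of $\widetilde{\mathcal{B}}$ yields the overall prefactor $\e^{\sum_i(n_{\bm k}(k_i)/2-\ell_{k_i})\eta}$, completing the match with $\beta_{\bm k}$.

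The main obstacle is the combinatorial bookkeeping of these two interlocking families of scattering factors across all $2^M$ sign choices and all orderings. The reorderings generate not only the ``diagonal'' contributions described above but also ``crossed'' terms in which a diagonal dressing is replaced by a creation operator at its own parameter; one must show that, after summing over the symmetric group and over $\bm\epsilon$, these recombine so that the coefficient of each $\Omega_{\bm k}$ is precisely the stated double sum. I expect to handle this by induction on $M$, checking the single-root case $M=1$ directly from the L-operators and the two-root exchange $M=2$ to fix both products of sinh-ratios, the inductive step reducing to a trigonometric (partial-fraction) identity that repackages the accumulated reordering factors into the symmetric form $\prod_{i<j}$ and $\prod_{m_i<m_j}$ required by \eqref{bBdecomposition}.
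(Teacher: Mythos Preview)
Your approach is essentially the paper's: use crossing symmetry to write $\mathcal{B}$ in terms of the type~A entries $B$ and $D$, reorder the resulting product so that all $B$'s sit to the left of all $D$'s, act with the $D$'s on $\Omega$, and then plug in the type~A Bethe-vector decomposition. Two points are worth sharpening. First, the paper does not commute $D$'s through $B$'s starting from the naive form \eqref{bBexpression0}; it first applies the $DB$ relation \emph{once inside a single} $\widetilde{\mathcal{B}}(x)$ together with a trigonometric identity to obtain the symmetric form \eqref{bBexpression1}, in which each term is already $B(-\epsilon x-\tfrac{\eta}{2})D(\epsilon x-\tfrac{\eta}{2})$ with the \emph{same} rapidity in both factors. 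It is this preliminary rewriting that makes the induction on $M$ (your last paragraph, the paper's \eqref{bBexpression2}, cf.\ \cite[Prop.~4.1]{RSV}) go through cleanly: the ``crossed'' terms generated by $DB\to BD$ are absorbed by the sum over $\bm\epsilon$ rather than cancelling against one another in an ad hoc partial-fraction identity. Second, a small bookkeeping correction: the factor $\prod_{s>m_i}\sinh(t_s+\epsilon_i x_i+\ell_s\eta)/\sinh(t_s+\epsilon_i x_i-\ell_s\eta)$ does \emph{not} come from the eigenvalue of $D$ on $\Omega$ (that produces only the $\prod_s$ factor via \eqref{DOmega}); it comes from the type~A decomposition \eqref{Bdecomposition2} after the substitution $x_i\mapsto -\epsilon_i x_i-\tfrac{\eta}{2}$.
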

Essentially, the result owes to the higher-spin ice rule of the $R$-operators and the diagonality of $\widebar{K}(x;\xi_-)$.
The proof of the Theorem is given in appendix \ref{app:Bethevectors}.

Note that Proposition 
\ref{thm:Bethevectordecomposition} implies that the off-shell boundary Bethe vectors 
$\widetilde{\mathcal{B}}(\bm x;\bm t) \Omega$
 are $\pi \sqrt{-1}$-periodic in each $x_i$.

\subsection{Weight functions} \label{sec:weightfunctions}

Fix $\tau, \eta, \xi_+,\xi_- \in \C$ and $\bm \ell \in \C^N$. Suppose that
$\Re(\tau)<0$ and set $q=\e^\tau$.
For $\bm t \in \C^N$, introduce the single-variable meromorphic $\pi\sqrt{-1}$-periodic functions $F(\cdot;\bm t)$, $g$, $h$ as follows:
\begin{equation}
\label{Fghdefn} \begin{aligned}
F(x;\bm t) &:= \prod_{s=1}^N \frac{\bigl(\e^{-2(t_s \pm x - \ell_s \eta)}
;q^2\bigr)_{\infty}}{\bigl(\e^{-2(t_s \pm x + \ell _s \eta)};q^2\bigr)_{\infty}}, \\
g(x) &:= \frac{(q^2 \e^{2(\widetilde \xi_+ -x)},q^2 \e^{2(\widetilde \xi_- - x)};q^2)_\infty}{(\e^{2(- \widetilde \xi_+-x)},\e^{2(-\widetilde \xi_- - x)};q^2)_\infty},\\
h(x) &:= (1-\e^{-2x}) \frac{(q^2 \e^{-2(x+\eta)};q^2)_\infty}{(\e^{-2(x-\eta)};q^2)_\infty}.
\end{aligned}
\end{equation}
For $i \in \{1,\ldots,M\}$ and $\bm k  \in I_{M,N}$ define the single-variable meromorphic $\pi\sqrt{-1}$-periodic function $u_{\bm k;i}(\cdot;\bm t)$ by
\begin{equation}
\label{udefn}
u_{\bm k;i}(x;\bm t) := \e^{-t_{k_i}} \theta\bigl(\e^{2(x-t_{k_i}+\psi_{\bm k;i})}\bigr) \frac{  \prod_{s>k_i} \theta(\e^{2(x-t_s- \ell_s \eta)})}{ \prod_{s\geq k_i} \theta(\e^{2(x-t_s+\ell_s \eta)})}
\end{equation}
where
\[ \psi_{\bm k;i} = \widetilde \xi_+ + \widetilde \xi_- + \tau + \ell_{k_i}\eta +  2\bigl( \sum_{s > k_i} \ell_s - M + i\bigr) \eta. \]
We use these functions as building blocks to define a 
meromorphic weight function $w_{\bm k}$ on 
$\C^M \times \C^N$ by
\begin{equation}
\label{wdefn} w_{\bm k}(\bm x;\bm t) := \Phi_{\bm k}(\bm t) \biggl( \prod_{i=1}^M F(x_i ; \bm t) g(x_i) u_{\bm k;i}(x_i;\bm t) \biggr)  \prod_{1\leq i<j\leq M} h(x_i \pm x_j) \hspace{10mm}
\end{equation}
By writing out the theta-functions as products of 
$q^2$-shifted factorials, cancellation with $q^2$-shifted factorials coming from
$F$ takes place. It leads to the expression
\begin{equation}\label{wformula}
\begin{aligned} 
\hspace{-8pt} w_{\bm k}(\bm x;\bm t)&=\Phi_{\bm k}(\bm t)  \Biggl\{ \prod_{i=1}^M\frac{ \prod_{s<k_i} ( \e^{2(x_i - t_s + \ell_s \eta)};q^2)_\infty}{ \prod_{s \leq k_i} ( \e^{2(x_i - t_s -\ell_s \eta)};q^2)_\infty}  \e^{-t_{k_i}}  \theta\bigl(\e^{2(x_i-t_{k_i}+\psi_{\bm k;i})}\bigr)   \\
& \hspace{18mm} \times  \frac{\prod_{s>k_i} (q^2 \e^{2(t_s-x_i+\ell_s \eta)};q^2)_\infty}{\prod_{s \geq k_i} (q^2 \e^{2(t_s-x_i-\ell_s \eta)};q^2)_\infty} \bigg( \! \prod_s \! \frac{( \e^{-2(t_s+x_i-\ell_s \eta)};q^2)_\infty}{( \e^{-2(t_s+x_i+\ell_s \eta)};q^2)_\infty} \! \bigg) \hspace{-8pt} \\
& \hspace{18mm} \times  \frac{(q^2 \e^{2(\widetilde \xi_+ -x_i)},q^2 \e^{2(\widetilde \xi_- -x_i)};q^2)_\infty}{( \e^{2(- \widetilde \xi_+-x_i)},\e^{2(-\widetilde \xi_- -x_i)};q^2)_\infty} \Biggr\} \\
& \qquad \qquad \times \Biggl\{ \prod_{i,j =1 \atop i <j}^M (1-\e^{-2( x_i \pm x_j)}) \frac{(q^2 \e^{-2(x_i \pm x_j + \eta)};q^2)_\infty}{(\e^{-2(x_i \pm x_j - \eta)};q^2)_\infty}\Biggr\}.
\end{aligned} \end{equation}
The poles of $w_{\bm k}(\bm x; \bm t)$ in $x_i$ 
are unilateral sequences whose real parts tend to either 
$\infty$ or $-\infty$ in steps of size $-\Re(\tau)$.
Also, $w_{\bm k}(\bm x;\bm t)$ is $\pi \sqrt{-1}$-periodic in each $x_i$.

\subsection{Integration contours} \label{sec:contours}

We fix $\tau\in\mathbb{C}$ with $\Re(\tau)<0$ and set $q=\e^{\tau}$.
We furthermore fix $N\in\mathbb{Z}_{>0}$ and $M\in\mathbb{Z}_{\geq 0}$.
Write $\bm\ell=(\ell_1,\ldots,\ell_N)$ for the $N$-tuple of highest
weights.

\begin{defi}
For $\bm k\in I_{M,N}$ we write
$\mathcal{D}_{M,N}^{\bm k}$ for the set of parameters $(\bm\ell,\eta)\in\C^N\times\C$ satisfying 
\[
\Re(\ell_r\eta) > \max\biggl( 0,\frac{n_{\bm k}(r)-1}{2}\Re(\eta) \biggr)
\qquad\forall\, r\in\{1,\ldots,N\}.
\]
Furthermore, set
\begin{equation}\label{parameterdomain}
\mathcal{D}_{M,N}:=\bigcap_{\bm k\in I_{M,N}}\mathcal{D}_{M,N}^{\bm k}.
\end{equation}
\end{defi}

For $\bm k\in I_{M,N}$ we write
\begin{equation}\label{ibmk}
i_{\bm k}(m;r):=\sum_{s<r}n_{\bm k}(s)+m
\end{equation}
for $r\in\{1,\ldots,N\}$ and $m\in\{1,\ldots,n_{\bm k}(r)\}$,
so that $k_i=r$ if and only if $i=i(m;r)$ for some 
$1\leq m\leq n_{\bm k}(r)$. We now define for $\bm k\in I_{M,N}$ 
and $(\ell,\eta)\in\mathcal{D}_{M,N}^{\bm k}$ the set
$\Gamma_{M,N}^{\bm k}$ of base points of the integration cycle
as the set of $M$-tuples
$\bm\gamma=(\gamma_{1},\ldots,\gamma_{M})\in\C^M$
satisfying
\begin{equation*}
\begin{split}
&-\Re(\ell_{k_i} \eta)<\Re(\gamma_{i})<\Re(\ell_{k_i} \eta),\\
& \Re(\gamma_{i(s+1;r)}) \leq \Re(\gamma_{i(s;r)}),\\
& \Re(\gamma_{i(s+1;r)})+\Re(\eta)<\Re(\gamma_{i(s;r)})
\end{split}
\end{equation*}
for $1\leq i\leq M$, $1\leq r\leq N$ and $1\leq s<n_{\bm k}(r)$.
Note that $\Gamma_{M,N}^{\bm k}$
is nonempty and path-connected if 
$(\bm\ell,\eta)\in\mathcal{D}_{M,N}^{\bm k}$.

Let $\bm k\in I_{M,N}$, $(\bm\ell,\eta)\in\mathcal{D}_{M,N}^{\bm k}$ and 
$\bm\gamma\in\Gamma_{M,N}^{\bm k}$. The integration cycles that will
be used in the definition of the integral solution of the boundary qKZ
equations \eqref{bqKZ} are of the form
\[ C_{\bm k}^{\bm\gamma}(\bm t) := \bigl(t_{k_1}+\gamma_{1}+
\sqrt{-1}[0,\pi]\bigr) \times \cdots \times \bigl(t_{k_M}+\gamma_{M}+
\sqrt{-1}[0,\pi]\bigr)
\]
for $\bm t=(t_1,\ldots,t_N)\in\C^N$. 


\subsection{Main result} \label{sec:main}

The set $\{\mathbf{e}_i-\mathbf{e}_{i+1}\}_{i=1}^{N-1}\cup\{\mathbf{e}_N\}$
is a choice of simple roots of the standard realization of the root system of type $\textup{B}_N$ in $\mathbb{R}^N$. Let $Q_+$ be the cone in $\mathbb{Z}^N$ generated by these simple roots, see \eqref{Qplus}. Denote
\[ \Lambda_+ := \max_{s \in \{1,\ldots,N\}} \Re(\ell_s \eta). \]

The main result of the paper can now be stated as follows.
\begin{thm}\label{thm:main}
Fix $N\in\mathbb{Z}_{>0}$ and $M\in\mathbb{Z}_{\geq 0}$. Set
$q=\e^{\tau}$ with $\Re(\tau)<0$. 
\begin{enumerate}
\item[{\bf a.}]  Let $\bm k \in I_{M,N}$ and 
$(\bm\ell,\eta)\in\mathcal{D}_{M,N}^{\bm k}$. For 
$\bm \gamma \in \Gamma_{M,N}^{\bm k}$ the integral
\begin{equation} \label{integrals} \Theta_{\bm k}(\bm t):= 
\int_{C_{\bm k}^{\bm\gamma}(\bm t)} \frac{w_{\bm k}(\bm x ; \bm t)}{\Phi_{\bm k}(\bm t)} \widetilde{\mathcal{B}}(\bm x;\bm t) \Omega \d^M \bm x \end{equation}
defines a $V^{\bm\ell}(M)$-valued holomorphic function in $\bm t\in
\widetilde{\mathbb{A}}$, where
\[
\begin{aligned}
\widetilde{\mathbb{A}} := 
\Bigl\{ \bm t \in \mathbb{A} \, \Big| \, & \Re(t_s-t_{s+1}) > 2\Lambda_+ + 
\max(\Re(\eta),0) \text{ for } 1 \leq s< N,\\ 
& \quad \Re(t_N)> \Lambda_+ + \max\bigl(\Re(\tfrac{\eta}{2}),0,\Re( -\widetilde \xi_+),\Re(-\widetilde \xi_-)\bigr) \Bigr\}. 
\end{aligned}
\]
The integral $\Theta_{\bm k}(\bm t)$
does not depend on the choice of $\bm \gamma \in \Gamma_{M,N}^{\bm k}$.
\item[{\bf b.}] Let $\bm k\in I_{M,N}$ and 
$(\bm\ell,\eta)\in\mathcal{D}_{M,N}^{\bm k}$. There exists a unique 
$V^{\bm\ell}(M)$-valued meromorphic solution $\Psi_{\bm k}$ of the boundary qKZ equations \eqref{bqKZ} on $\C^N$ such that, 
on the sector $\widetilde{\mathbb{A}}$,
\begin{equation} \label{scaledintegrals} \Psi_{\bm k}(\bm t)= \Phi_{\bm k}(\bm t)  \Theta_{\bm k}(\bm t)  =  \int_{C^{\bm \gamma}_{\bm k}(\bm t)} w_{\bm k}(\bm x ; \bm t)  \widetilde{\mathcal{B}}(\bm x;\bm t) \Omega \d^M \bm x. \end{equation}
\item[{\bf c.}] Let $\bm k\in I_{M,N}$ and $(\bm\ell,\eta)\in
\mathcal{D}_{M,N}^{\bm k}$. The $V^{\bm\ell}(M)$-valued
integral $\Theta_{\bm k}(\bm t)$ has a $V^{\bm\ell}(M)$-valued
series expansion for $\bm t\in\widetilde{\mathbb{A}}$ of the form
\begin{equation}\label{integralasymptotics}
\Theta_{\bm k}(\bm t)=\sum_{{\alpha}\in Q_+}L_{\alpha}^{\bm k} \e^{-({\alpha},\bm t)},\qquad L_{\alpha}^{\bm k}\in V^{\bm\ell}(M),
\end{equation}
with the series converging normally for $\bm t$ in compact subsets of $\widetilde{\mathbb{A}}$. 
The leading coefficient is given by
$L_{\bm 0}^{\bm k}=\nu_{\bm k}\Omega_{\bm k}$, where
\begin{gather} 
\label{nuproduct}
\begin{aligned}
\nu_{\bm k}  &:= \bigl( \pi \sqrt{-1} \e^{\xi_-} \bigr)^M \biggl( \prod_{1 \leq r < s \leq N} \e^{2 \ell_r n_{\bm k}(s) \eta} \biggr) \\
& \qquad \times  \prod_{r=1}^N \prod_{m=1}^{n_{\bm k}(r)} \frac{(q^2 \e^{-2m\eta}, q\e^{2((m-1-\ell_r)\eta \pm \omega_{\bm k;r})};q^2)_\infty}{(q^2,q^2\e^{-2\eta},\e^{2(m-1-2\ell_r)\eta};q^2)_\infty}, \hspace{-3mm}
\end{aligned} \\
\label{omega} \omega_{\bm k;r} := \xi_+ + \xi_- + \tau + (\ell_r - n_{\bm k}(r))\eta + 2 \sum_{s>r} (\ell_s - n_{\bm k}(s) ) \eta. 
\end{gather}
\item[{\bf d.}] Let $(\bm\ell,\eta)\in\mathcal{D}_{M,N}$. Then 
$\{ \Psi_{\bm k} \, | \, \bm k \in I_{M,N}\}$ is a linear basis of the space 
of $V^{\bm \ell}(M)$-valued meromorphic solutions of the boundary 
qKZ equations \eqref{bqKZ} over the field of $\tau \Z^N$-periodic 
meromorphic functions.
\end{enumerate}
\end{thm}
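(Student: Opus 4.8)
The plan is to prove the four parts in the order a, c, b, d, since the asymptotic expansion of part c is what identifies the integral as the expected solution in part b and drives the independence argument in part d.

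For part a I would first read off from the product formula \eqref{wformula} the precise location of the poles of $w_{\bm k}(\bm x;\bm t)$ in each $x_i$: each inverse $q^2$-shifted factorial contributes a unilateral string of poles spaced by $\tau$, with real parts marching off to $+\infty$ or $-\infty$. The point is that for $(\bm\ell,\eta)\in\mathcal{D}_{M,N}^{\bm k}$, $\bm\gamma\in\Gamma_{M,N}^{\bm k}$ and $\bm t\in\widetilde{\mathbb{A}}$, the vertical segment $t_{k_i}+\gamma_i+\sqrt{-1}[0,\pi]$ sits strictly between the two nearest pole strings; the inequalities defining $\mathcal{D}_{M,N}^{\bm k}$, $\Gamma_{M,N}^{\bm k}$ and $\widetilde{\mathbb{A}}$ are calibrated for exactly this separation. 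Since both $w_{\bm k}$ and $\widetilde{\mathcal{B}}(\bm x;\bm t)\Omega$ are $\pi\sqrt{-1}$-periodic in each $x_i$ (Proposition~\ref{thm:Bethevectordecomposition} and the remark after \eqref{wformula}), each segment is a closed cycle on the cylinder $\C/\pi\sqrt{-1}\Z$ and the integrand is holomorphic in a neighbourhood of the compact cycle $C_{\bm k}^{\bm\gamma}(\bm t)$; holomorphy of $\Theta_{\bm k}$ in $\bm t$ then follows by differentiation under the integral sign. Independence of $\bm\gamma$ is Cauchy's theorem: $\Gamma_{M,N}^{\bm k}$ is path-connected, and moving $\bm\gamma$ along a path homotopes the cycles without crossing poles.

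For part c I would pass to the variables $y_i=x_i-t_{k_i}$, in which the contour becomes $\bm\gamma+\sqrt{-1}[0,\pi]^M$, independent of $\bm t$. Deep in $\widetilde{\mathbb{A}}$ the integrand expands as a normally convergent power series in the quantities $\e^{-(t_s-t_{s+1})}$ ($1\leq s<N$) and $\e^{-t_N}$, equivalently in $\e^{-(\alpha,\bm t)}$ with $\alpha\in Q_+$; this uses the product formula for $w_{\bm k}$ together with the asymptotics of the Bethe coefficients $\beta_{\bm m}$ of Proposition~\ref{thm:Bethevectordecomposition}, for which I would invoke Appendix~\ref{app:Bethevectors}. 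Integrating term by term gives \eqref{integralasymptotics}. For the leading coefficient I set all expansion variables to zero: the asymptotic analysis singles out the component $\Omega_{\bm k}$, and the leading weight factor factorizes over the clusters of equal $k_i$ — factors $h(y_i\pm y_j)$ with $k_i\neq k_j$ tend to constants while those within a cluster survive — so that each cluster $r$ contributes an $n_{\bm k}(r)$-dimensional Gustafson-type $q$-integral. Evaluating these yields the closed product $\nu_{\bm k}$ of \eqref{nuproduct}; in particular $\nu_{\bm k}\neq0$ on $\mathcal{D}_{M,N}^{\bm k}$ for generic $\tau,\xi_\pm$.

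Part b is the heart of the argument. On $\widetilde{\mathbb{A}}$ I would verify directly that $\Psi_{\bm k}=\Phi_{\bm k}\Theta_{\bm k}$ satisfies $\Psi_{\bm k}(\bm t+\tau\bm e_r)=\mathcal{A}_r(\bm t)\Psi_{\bm k}(\bm t)$. The mechanism is the standard one for integral solutions: $w_{\bm k}$ was designed so that the scalar factors $F,g,u_{\bm k;i}$ obey prescribed $\tau$-difference relations, while $\widetilde{\mathcal{B}}(\bm x;\bm t)\Omega$ inherits exchange relations with the R- and K-matrices from \eqref{RLL} and \eqref{RTRT}. Shifting $t_r\mapsto t_r+\tau$ displaces the base points of exactly those contour components indexed by $i$ with $k_i=r$; using the $\tau$-quasi-periodicity of the integrand in $x_i$ one deforms each such segment back, the discrepancy being governed by the intervening pole strings, and the accumulated factors reassemble into the transport operator $\mathcal{A}_r(\bm t)$. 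I expect the bookkeeping of the boundary K-matrix contributions — the feature distinguishing type C from the type A computation of \cite{TV2} — to be the principal obstacle; to control it I would first treat the finite-dimensional modules of Section~\ref{sec:findim}, where the Bethe vectors are polynomial, and then continue meromorphically in $\bm\ell$. Once the equation holds on $\widetilde{\mathbb{A}}$, the relations \eqref{bqKZ} themselves, with their meromorphic transport operators and the compatibility $\mathcal{A}_r(\bm t+\tau\bm e_s)\mathcal{A}_s(\bm t)=\mathcal{A}_s(\bm t+\tau\bm e_r)\mathcal{A}_r(\bm t)$, propagate $\Psi_{\bm k}$ to a meromorphic solution on all of $\C^N$. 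Uniqueness follows from part c: $\Phi_{\bm k}^{-1}\Psi_{\bm k}=\Theta_{\bm k}$ solves the rescaled system \eqref{rbqKZ} with leading coefficient $\nu_{\bm k}\Omega_{\bm k}$, so the uniqueness statement of \cite[Appendix]{vMS} forces $\Theta_{\bm k}=\nu_{\bm k}\widetilde{\Theta}_{\bm k}$.

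Finally, for part d I would combine the standard fact that the space of $V^{\bm\ell}(M)$-valued meromorphic solutions of a compatible first-order system of the form \eqref{bqKZ} has dimension $\dim V^{\bm\ell}(M)=|I_{M,N}|$ over the field $\mathcal{F}$ of $\tau\Z^N$-periodic meromorphic functions with linear independence of the $\Psi_{\bm k}$. For the latter, suppose $\sum_{\bm k}c_{\bm k}\Psi_{\bm k}=0$ with $c_{\bm k}\in\mathcal{F}$ and restrict to $\widetilde{\mathbb{A}}$, where $\Psi_{\bm k}=\Phi_{\bm k}\Theta_{\bm k}$. Taking $\Omega_{\bm m}$-components gives $\sum_{\bm k}(c_{\bm k}\Phi_{\bm k})(\Theta_{\bm k})_{\bm m}=0$ for all $\bm m$, a homogeneous linear system in the unknowns $c_{\bm k}\Phi_{\bm k}$ whose coefficient matrix $[(\Theta_{\bm k})_{\bm m}]_{\bm m,\bm k}$ tends to $\mathrm{diag}(\nu_{\bm k})$ as $\bm t\stackrel{\mathbb{A}}{\rightarrow}\infty$ by part c. Since $\nu_{\bm k}\neq0$, this matrix is invertible for $\bm t$ deep in $\widetilde{\mathbb{A}}$, forcing $c_{\bm k}\Phi_{\bm k}=0$ there and hence $c_{\bm k}\equiv0$ by meromorphy. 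As the number of the $\Psi_{\bm k}$ equals $\dim V^{\bm\ell}(M)$, independence upgrades to a basis.
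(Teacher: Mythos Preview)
Parts {\bf a}, {\bf c} and {\bf d} of your proposal match the paper's proof closely: the pole analysis and Cauchy argument for {\bf a}, the change of variables $y_i=x_i-t_{k_i}$ followed by term-by-term integration and factorisation of the leading integral into cluster integrals for {\bf c} (the paper evaluates these via \cite[Appendix D]{TV2} rather than calling them Gustafson integrals, but it is the same computation), and the leading-term argument combined with \cite[\S 5.6]{vMS} for {\bf d}.

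For part {\bf b} your outline has the right shape but the proposed route through finite-dimensional modules would not work: the half-integer points $\ell_r\in\tfrac{1}{2}\Z_{>0}$ form a discrete set, so one cannot meromorphically continue from there back to generic $\bm\ell$. The paper instead keeps $\bm\ell$ generic and temporarily restricts $\tau$ so that $-\Re(\tau)$ is small relative to $\Re(\ell_r\eta)$ (condition \eqref{tauinequality}), and simultaneously restricts $\bm\gamma$ to a smaller set $\Gamma_{M,N;\tau}^{\bm k}$. Under these restrictions the shift $t_r\mapsto t_r+\tau$ moves the contour without crossing any poles (Lemma~\ref{lem:integrandanalyticinstrip}), so no residues enter; the verification of \eqref{bqKZ} then reduces, via Cauchy's theorem replacing the summation-variable shifts, to the algebraic manipulations already carried out for Jackson integrals in \cite[Section 7]{RSV2} (Lemma~\ref{lem:Zanalyticinstrip} supplies the one additional analytic check needed). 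The extra constraint on $\tau$ is afterwards removed by analytic continuation in $\tau$, not in $\bm\ell$. Finally, the uniqueness asserted in part {\bf b} is not the \cite{vMS} uniqueness of power-series solutions but simply uniqueness of the meromorphic extension of $\Phi_{\bm k}\Theta_{\bm k}$ from $\widetilde{\mathbb{A}}$ to $\C^N$ subject to \eqref{bqKZ}; the paper proves this directly via the cocycle $\{\mathcal{A}_\alpha(\bm t)\}_{\alpha\in\Z^N}$ and the extension formula \eqref{extension}. Your identification $\Theta_{\bm k}=\nu_{\bm k}\widetilde{\Theta}_{\bm k}$ is correct and useful, but it is a consequence of parts {\bf b} and {\bf c} together rather than the content of {\bf b}.
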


\begin{rema}
We may write 
\[  \Psi_{\bm k}(\bm t) =  \sum_{\bm m \in I_{M,N} } 
\biggl( \int_{C_{\bm k}^{\bm\gamma}(\bm t)}  w_{\bm k}(\bm x;\bm t) \beta_{\bm m}(\bm x;\bm t)  \d^M \bm x  \biggr) \Omega_{\bm m} \] 
for $\bm t\in\widetilde{\mathbb{A}}$ with the coefficients $\beta_{\bm m}$ 
as given in Theorem \ref{thm:Bethevectordecomposition}. 
It allows for a direct comparison with the integral solutions to type A 
qKZ in \cite{TV2}. 
The coefficients $\beta_{\bm m}(\bm x;\bm t)$ of the Bethe vectors 
are the boundary analogons of the ``trigonometric weight functions'' appearing in \cite{TV2}, whereas $w_{\bm k}(\bm x;\bm t)$ corresponds to the product of the ``short phase function'' and the ``elliptic weight function'' and $\Phi_{\bm k}(\bm t)$ is the direct counterpart of the ``adjusting factor'' of the elliptic weight function. 
More precisely, our integrand is the direct analogon of the integrand $F(t)$ considered in \cite[p.43, Proof of Theorem 6.6]{TV2}.
Here we add that in \cite{TV2} the elliptic weight function, like the trigonometric weight function, is defined as an orbit sum.
However, owing to invariance properties under an action of the symmetric group, its terms all contribute the same to the integral. 
In \cite{KN}, this type A elliptic weight function was re-defined as a single term in order to make a connection with solutions to the qKZ equations in terms of formulae derived from free-field realizations of intertwiners of quantum affine $\mathfrak{sl}_2$.
\end{rema}

The version of Theorem \ref{thm:main} for solutions of the boundary
qKZ equations \eqref{bqKZ} taking values in the tensor product of finite
dimensional modules over quantum $\mathfrak{sl}_2$ is discussed in Section 
\ref{sec:findim}.


\section{Proof of the main results} \label{sec:proof}

Here we will prove the various statements made in Thm. \ref{thm:main}. 
We start by a helpful lemma listing the poles of the integrand of $\Theta_{\bm k}(\bm t)$.

\begin{lem} \label{lem:poles}
Let $j \in \{1,\ldots,M\}$. 
Fix $x_i \in  t_{k_i} + \gamma_{i} + \sqrt{-1} [0,\pi] $ for $i \ne j$.
The poles of the integrand $w_{\bm k}(\bm x;\bm t) \widetilde{\mathcal{B}}(\bm x;\bm t) \Omega$ as a function of $x_j$ are contained in 
\[ \Bigl( (P^+_{\bm k;j}(\bm t) - \tau \Z_{\geq 0}) \cup (P^-_{\bm k;j}(\bm t) + \tau \Z_{\geq 0})  \Bigr)+\pi \sqrt{-1} \Z, \]
where
\begin{align*}
P^+_{\bm k;j}(\bm t) &= { \{   t_s + \ell_s \eta \}_{  s \leq k_j }} \cup \{ x_i - \eta  \}_{i<j} \\
P^-_{\bm k;j}(\bm t) &= \{ t_s - \ell_s \eta\}_{s \geq k_j } \cup \{ -t_s -\ell_s \eta \}_{s} \cup \{ -\widetilde \xi_+, -\widetilde \xi_- \} \cup \\
& \qquad  \cup \{ x_i + \eta \}_{i>j} \cup \{ - x_i + \eta \}_{i \ne j }.
\end{align*}
\end{lem}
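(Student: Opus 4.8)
The plan is to locate the poles of the two factors of the integrand separately and then take their union. The poles of the weight function $w_{\bm k}(\bm x;\bm t)$ in the variable $x_j$ can be read off directly from the product formula \eqref{wformula}. I would scan that expression factor by factor: each $q^2$-shifted factorial of the form $(\e^{2(x_j-t_s-\ell_s\eta)};q^2)_\infty$ in a denominator contributes poles where its argument equals $q^{-2m}$ for $m \in \Z_{\geq 0}$, i.e. at $x_j = t_s + \ell_s\eta - \tau\Z_{\geq 0}$ modulo $\pi\sqrt{-1}\Z$; similarly, the factorials in $e^{-2(\cdots)}$ form contribute poles marching off in the $+\tau\Z_{\geq 0}$ direction. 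Tracking all such factors gives exactly the points $t_s+\ell_s\eta$ (for $s\le k_j$) and $-\tilde\xi_+,-\tilde\xi_-,\ -t_s-\ell_s\eta,\ t_s-\ell_s\eta$, together with the pairwise factors $h(x_i\pm x_j)$ which produce $x_i-\eta$ and $-x_i+\eta$ (for $i\neq j$). The renormalized theta function $\theta(e^{2(x_j-t_{k_j}+\psi_{\bm k;j})})$ is entire, so it contributes no poles. I would organize these into the $P^+$ (poles descending by $-\tau\Z_{\geq 0}$) and $P^-$ (ascending by $+\tau\Z_{\geq 0}$) lists according to the sign of the step direction, matching the claimed statement.

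For the Bethe-vector factor $\widetilde{\mathcal{B}}(\bm x;\bm t)\Omega$, I would invoke Proposition \ref{thm:Bethevectordecomposition}, which gives its coefficients $\beta_{\bm k}(\bm x;\bm t)$ as explicit sums of products of $\sinh$-quotients. The denominators there are $\sinh(t_{m_i}+\epsilon_i x_i - \ell_{m_i}\eta)$, $\sinh(t_s-\epsilon_i x_i - \ell_s\eta)$, $\sinh(\epsilon_i x_i+\epsilon_j x_j)$, $\sinh(\epsilon_i x_i - \epsilon_j x_j)$ and $\sinh(2x_i+\eta)$ (the latter from the prefactor of $\widetilde{\mathcal{B}}$). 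Since $\sinh$ vanishes precisely on $\pi\sqrt{-1}\Z$, each such denominator yields a single pole location modulo $\pi\sqrt{-1}\Z$, with no $\tau$-ladder. I would check that every one of these $\sinh$-poles already lies among the points in $P^+_{\bm k;j}$ or $P^-_{\bm k;j}$: the $\sinh(t_s\pm x_j - \ell_s\eta)$ factors reproduce the $\pm t_s\mp\ell_s\eta$ entries, the $\sinh(x_i\pm x_j)$ and $\sinh(x_i\mp x_j - \eta)$ factors land at $\pm x_i$ and $\pm x_i\pm\eta$, and the $\sinh(2x_j+\eta)$ factor gives $x_j=-\eta/2$, which I must verify is subsumed or handled by the prefactor $\sinh(2x_j)$ cancellation in $\widetilde{\mathcal{B}}$.

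The key subtlety — and the step I expect to cost the most care — is verifying that the poles of $\widetilde{\mathcal{B}}$ do not enlarge the stated set, and conversely that spurious poles cancel. In particular, the definition of $\widetilde{\mathcal{B}}$ carries the prefactor $\tfrac{\sinh(2x)}{\sinh(2x+\eta)}\cdot\tfrac{\sinh(\tilde\xi_- - x)}{\sinh(\eta)}\prod_s\tfrac{\sinh(t_s-x+\ell_s\eta)}{\sinh(t_s-x-\ell_s\eta)}$ evaluated at $-x-\tfrac\eta2$, and I must substitute $x\mapsto -x_j-\tfrac\eta2$ and recompute where these factors place their poles, confirming they fall into $P^+\cup P^-$ (for example $\sinh(2x+\eta)\big|_{x=-x_j-\eta/2}=\sinh(-2x_j)$, whose pole at $x_j=0$ must be checked against the numerator). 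I would also confirm that the finitely many ``base'' poles only propagate in one direction along the $\tau$-lattice — poles from $w_{\bm k}$ genuinely march to $\pm\infty$ as noted after \eqref{wformula}, whereas the $\beta_{\bm k}$ contribute only the single representatives — so that the combined pole set is precisely the claimed union of $P^+_{\bm k;j}(\bm t)-\tau\Z_{\geq 0}$ and $P^-_{\bm k;j}(\bm t)+\tau\Z_{\geq 0}$, modulo $\pi\sqrt{-1}\Z$.
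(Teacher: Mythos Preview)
Your overall strategy matches the paper's: read off the poles of $w_{\bm k}$ from \eqref{wformula}, read off the poles of $\widetilde{\mathcal{B}}(\bm x;\bm t)\Omega$ from Proposition \ref{thm:Bethevectordecomposition}, and combine. But your second step contains a genuine error that makes the argument fail as written.

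You claim that each $\sinh$-pole of $\beta_{\bm m}$ ``already lies among the points in $P^+_{\bm k;j}$ or $P^-_{\bm k;j}$''. This is false. From the formula in Proposition \ref{thm:Bethevectordecomposition}, the denominators of $\beta_{\bm m}$ place poles (in $x_j$, modulo $\pi\sqrt{-1}\Z$) at $\pm(t_s-\ell_s\eta)$ for all $s$, and at $\pm x_i$ for $i\ne j$. None of the bare points $\pm x_i$ appear in $P^\pm_{\bm k;j}(\bm t)$ (only $x_i\pm\eta$ and $-x_i+\eta$ do), nor does $-t_s+\ell_s\eta$, nor does $t_s-\ell_s\eta$ for $s<k_j$. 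The correct mechanism --- and this is precisely what the paper's proof invokes --- is that these extra poles are \emph{cancelled by zeros of} $w_{\bm k}$: the factors $(1-\e^{-2(x_i\pm x_j)})$ in \eqref{wformula} kill the $\pm x_i$ poles, the factor $(\e^{-2(t_s+x_j-\ell_s\eta)};q^2)_\infty$ kills the pole at $-t_s+\ell_s\eta$, and $(\e^{2(x_j-t_s+\ell_s\eta)};q^2)_\infty$ (present only for $s<k_j$) kills the pole at $t_s-\ell_s\eta$ for those $s$. The surviving Bethe-vector pole at $t_s-\ell_s\eta$ for $s\ge k_j$ then supplies exactly the $m=0$ term of the $P^-$ ladder (the $w_{\bm k}$ denominator $(q^2\e^{2(t_s-x_j-\ell_s\eta)};q^2)_\infty$ by itself only gives $m\ge 1$).

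Two smaller points. First, there is no separate $\sinh(2x_j+\eta)$ denominator to track: that prefactor is already absorbed into the explicit $\beta_{\bm m}$ formula of Proposition \ref{thm:Bethevectordecomposition}, whose denominators are only the four $\sinh$ types listed above. Second, your bookkeeping of the $h(x_i\pm x_j)$ poles is slightly off: one gets $x_i-\eta$ only for $i<j$ (landing in $P^+$), $x_i+\eta$ only for $i>j$, and $-x_i+\eta$ for all $i\ne j$ (both in $P^-$), exactly as in the statement. Once you replace ``inclusion in $P^\pm$'' by ``cancellation against zeros of $w_{\bm k}$'' for the Bethe-vector poles, your argument becomes the paper's.
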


\begin{proof}
Since $\Re(\tau)<0$, from \eqref{wformula} we see that the sequences of poles of $x_j \mapsto w_{\bm k}(\bm x;\bm t)$ whose real parts tend to $+\infty$ are given by
\[  \Bigl( \bigl\{ t_s + \ell_s \eta \, \big| \, 1 \leq s \leq k_j  \bigr\} \cup \bigl\{  x_i - \eta  \, \big| \, i  < j  \bigr\} \Bigr) - \tau \Z_{\geq 0} + \pi \sqrt{-1} \Z \]
whereas the sequences of poles whose real parts tend to $-\infty$ are given by
\begin{align*}
& \Bigl( \bigl\{ t_s - \ell_s \eta + \tau \, \big| \, k_j  \leq s \leq N \bigr\} \cup \bigl\{ -t_s - \ell_s \eta \, \big| \, 1 \leq s \leq N \bigr\} \cup \{ -\widetilde \xi_+, -\widetilde \xi_- \} \cup \\
& \hspace{32mm} \cup \bigl\{ x_i+\eta \, \big| \, i>j \bigr\} \cup \bigl\{  -x_i + \eta  \, \big| \, i  \ne  j  \bigr\} \Bigr) + \tau \Z_{\geq 0} + \pi \sqrt{-1} \Z. 
\end{align*}
From Proposition \ref{thm:Bethevectordecomposition} we see that the poles of $x_j \mapsto \beta_{\bm k'}(\bm x;\bm t)$ are contained in
\[   \Bigl( \bigl\{ \pm (t_s - \ell_s \eta) \, \big| \, 1 \leq s \leq N \bigr\} \cup \bigl\{ \pm x_i \, \big| \, i  \ne j  \bigr\} \Bigr) + \pi \sqrt{-1} \Z. \]
Using \eqref{wformula} again, we see that of these poles all but those of the form $t_s-\ell_s \eta$ for $1 \leq s \leq k_j$ are cancelled by zeros of $w_{\bm k}(\bm x; \bm t)$.
We obtain the desired statement.
\end{proof}

In the following four subsections we prove the four parts of Theorem 
\ref{thm:main}.

\subsection{Holomorphicity of the $\Theta_{\bm k}$}
Here and in Section \ref{sec:asymptoticscompleteness} we will use the shorthand notation
\[ \bm t_{\bm k} := (t_{k_1},\ldots,t_{k_M}) \in \C^M \]
for $\bm t = (t_1,\ldots,t_N) \in \C^N$ and $\bm k = (k_1,\ldots,k_M) \in I_{M,N}$.
We may substitute $x_i = y_i + t_{k_i}$ for $i \in \{1,\ldots,M\}$ in the defining formula \eqref{integrals}, 
so that 
\begin{equation} \label{integraltransformed}  \Theta_{\bm k}(\bm t) =   \int_{C^{\bm \gamma}_{\bm k}(\bm 0)} \frac{w_{\bm k}(\bm t_{\bm k} + \bm y; \bm t)}{\Phi_{\bm k}(\bm t)} \widetilde{\mathcal B}(\bm t_{\bm k} + \bm y;\bm t)  \Omega \d^M \bm y. \end{equation}
Note that the integration over $y_i$ is over the line segment 
$\gamma_{i} + \sqrt{-1} [0,\pi]$.

From Lemma \ref{lem:poles} we see that, for any $j \in \{1,\ldots,M\}$, the poles of the integrand as a function of $y_j$ are avoided if the real part of every element of $P^+_{\bm k;j}(\bm t)$ (with $x_i = y_i + t_{k_i}$ for $i \ne j$) exceeds $\Re(t_{k_j}+y_j)$, and the real part of every element of $P^-_{\bm k;j}(\bm t)$ (with $x_i = y_i + t_{k_i}$ for $i \ne j$) is less than $\Re(t_{k_j}+y_j)$. 
Note that $\Re(y_i)=\Re(\gamma_{i})$ for all $i \in \{1,\ldots,N\}$.
This yields the inequalities
\[
\begin{array}{r@{\hspace{3pt}}ll} 
|\Re(\gamma_{j})| &< \Re(\ell_{k_j} \eta), & \\
\Re(\gamma_{j} - \gamma_{j}) &> \Re(\eta), & \text{for } i<j \text{ and } k_i=k_j,  \\
\Re(\gamma_{j} - \gamma_{i}) &> \Re(\eta), & \text{for } i>j \text{ and } k_i=k_j, \\
\Re(t_s - t_{k_j}) &> \Re(\gamma_{j} - \ell_s \eta), & \text{for } 1 \leq s < k_j, \\
\Re(t_{k_j} - t_s) &> \Re(-\gamma_{j} - \ell_s \eta), & \text{for } k_j < s \leq N, \\
\Re(t_s+t_{k_j}) &> \Re(-\gamma_{j} - \ell_s \eta), & \text{for } 1 \leq s \leq N, \\
\Re(t_{k_i} - t_{k_j}) &> \Re(\gamma_{j} - \gamma_{i} + \eta),  & \text{for } i<j \text{ and } k_i<k_j, \\
\Re(t_{k_j} - t_{k_i}) &> \Re(\gamma_{i}- \gamma_{j} + \eta),  & \text{for } i>j \text{ and } k_i>k_j, \\
\Re(t_{k_i}+t_{k_j}) &> \Re(-\gamma_{i}- \gamma_{j} + \eta), & \text{for } i \ne j, \\
\Re(t_{k_j}) &> \Re(-\widetilde \xi_+ - \gamma_{j}) , & \\ 
\Re(t_{k_j}) &> \Re(-\widetilde \xi_- - \gamma_{j}). &
\end{array}
\]
The inequalities independent of $\bm t$ are a simple consequence of the condition $\bm \gamma \in \Gamma_{M,N}^{\bm k}$. 
For the inequalities involving sums and differences of the $t_s$ we also need $\Re(t_r-t_{r+1})>2\Lambda_+ + \max(\Re(\eta),0)$ for $1 \leq r< N$, 
$\Re(t_N)>\Lambda_+ + \max(\Re(\tfrac{\eta}{2}),0)$ and $\Re(t_N)>0$. 
The final two inequalities rely on $\Re(t_N) > \Lambda_+ + \max(\Re(-\widetilde \xi_+),\Re(-\widetilde \xi_-))$.

Since the integration in \eqref{integraltransformed} is over a compact set which is independent of $\bm t$ 
the integrals define $V^{\bm\ell}(M)$-valued 
holomorphic functions $\Theta_{\bm k}$ on $\widetilde{\mathbb{A}}$.

By virtue of Cauchy's integral theorem, these functions $\Theta_{\bm k}$ do not depend on $\bm \gamma \in \Gamma_{M,N}^{\bm k}$ 
since $\Gamma_{M,N}^{\bm k}$ is path-connected and the separation of poles by
the contours is unaltered for different choices of $\gamma$. 
We obtain part {\bf a} of Theorem \ref{thm:main}.

\subsection{Integral solutions of the boundary qKZ equations} \label{sec:bqKZverification}

It is convenient to define $F_{\bm k}$ to be the $V^{\bm \ell}(M)$-valued meromorphic function $\Phi_{\bm k} \Theta_{\bm k}$ on $\widetilde{\mathbb{A}}$.
Consider the subsector
\begin{align*} 
\widetilde{\mathbb{A}}_\tau := 
\Bigl\{ \bm t \in \mathbb{A} \, \Big| \, & \Re(t_s-t_{s+1}) > 2\Lambda_+ + \max( \Re(\eta) , 0 ) - \Re(\tau)
\text{ for } 1 \leq s< N,\\ 
& \qquad \Re(t_N)> \Lambda_+ + \max\bigl( \Re(\tfrac{\eta}{2}), 0 ,\Re(-\widetilde \xi_+),\Re(-\widetilde \xi_-) \bigr) - \Re(\tau) \Bigr\} 
\end{align*}
of $\widetilde{\mathbb{A}}$.
Note that if 
$\bm t \in \widetilde{\mathbb{A}}_\tau \subset \widetilde{\mathbb{A}}$ then
$\bm t + \tau \bm e_r \in \widetilde{\mathbb{A}}$ for all $r \in \{1,\ldots,N\}$ so that both sides of the boundary qKZ equations \eqref{bqKZ} with $\Psi = F_{\bm k}$ as given by \eqref{scaledintegrals} are well-defined on 
$\widetilde{\mathbb{A}}_\tau$. Here we will show that they are equal
on $\widetilde{\mathbb{A}}_\tau$.

To show this we need to make additional assumption on the step size $\tau$,
which we can later remove by meromorphic continuation.

Note that the definition of the parameter set $\mathcal{D}^{\bm k}_{M,N}$
does not depend on $\tau$. We can and will therefore restrict to
parameters $(\bm\ell,\eta)\in\mathcal{D}^{\bm k}_{M,N}$ 
and $\tau\in\mathbb{C}$ with $\Re(\tau)<0$
satisfying the additional conditions
\begin{equation} \label{tauinequality}
-\Re(\tau) \leq \min_{1 \le r \le N} \Re(\ell_r \eta) \text{ and }  - \Re(\tau) < \min_{1 \le r \le N} \frac{\Re \big( (2 \ell_r + 1 - n_{\bm k}(r))\eta\big)}{n_{\bm k}(r)+1}
\end{equation}
(which is possible since $(\bm\ell,\eta)\in\mathcal{D}^{\bm k}_{M,N}$).
We also take the base point $\bm\gamma$ in the restricted set 
$\Gamma^{\bm k}_{M,N;\tau}$ consisting of the 
base points $\bm\gamma\in\Gamma^{\bm k}_{M,N}$
satisfying
\begin{equation*}
\begin{split}
&-\Re(\ell_{k_i} \eta+\tau)<\Re(\gamma_{i})<\Re(\ell_{k_i} \eta+\tau),\\
&\Re(\gamma_{i(s+1;r)})+\Re(\eta-\tau)<\Re(\gamma_{i(s;r)})
\end{split}
\end{equation*}
for $1\leq i\leq M$, $1\leq r\leq N$ and $1\leq s<n_{\bm k}(r)$.
Note that $\Gamma^{\bm k}_{M,N;\tau}$ is nonempty 
as a consequence of $(\bm \ell,\eta) \in \mathcal{D}^{\bm k}_{M,N}$ and 
\eqref{tauinequality}.

We will use the proof \cite[Section 7]{RSV2} of the analogous statement for Jackson integral solutions of the boundary qKZ equations. 
These were defined as summations over $\bm x \in \bm x_0+\tau \Z^M$ for some
base point $\bm x_0$.
Hence each variable $x_j$ could be replaced by $x_j - \tau$ without affecting the overall value of the sum; such shifts provided a key step to the proof. 

To mimic the proof for Jackson integrals we 
will shift integration variables by $-\tau$ in the integrals
using Cauchy's theorem (recall that $\Re(\tau)<0$). We use the following 
standard observation. Let $f(\bm x)$ be a  
meromorphic function in $\bm x\in \C^M$ and view it, for fixed $x_i$ 
($i\not=j$), as meromorphic function in $x_j$.
Suppose it is $\pi\sqrt{-1}$-periodic in $x_j$ and 
holomorphic for $x_j$ in the vertical strip
\begin{equation} \label{strip} 
S_{\tau}(z) := \{ y\in\mathbb{C} \,\, | \,\, \Re(z)\leq\Re(x)\leq\Re(z-\tau) \}.
\end{equation}
Then
\[ \int_{z+[0,\pi] \sqrt{-1}} f(\bm x) \d x_j = \int_{z+[0,\pi] \sqrt{-1}} f(\bm x - \bm e_j \tau) \d x_j  \]
by a direct application of Cauchy's theorem.
So when in \cite{RSV2} a summation variable is shifted by $-\tau$ (this occurs in Lemma 7.6 in {\it ibid.}), we replace this by the procedure above based on the application of Cauchy's theorem.

The only other difference with the Jackson integral case arises because in the definition of $F_{\bm k}(\bm t)$, the contours depend on $\bm t$. In other
words, for $\bm t \in \widetilde{\mathbb{A}}_\tau$ we have
\[ F_{\bm k}(\bm t + \tau \bm e_r) =  \int_{C_{\bm k}^{\bm \gamma}(\bm t+\tau \bm e_r)} w_{\bm k}(\bm x;\bm t + \tau \bm e_r) \widetilde{\mathcal{B}}(\bm x;\bm t + \tau \bm e_r) \Omega \d^M \bm x, \]
with the shift in $\tau$ possibly appearing in one of the integration contours as well as the integrand.
If $r \ne k_j$ for all $j \in \{1,\ldots,M\}$, then the shift in $\tau$ does not affect the contour, hence
\begin{equation}\label{Psiwithtaushift}
F_{\bm k}(\bm t + \tau \bm e_r) =  \int_{C_{\bm k}^{\bm \gamma}(\bm t)} w_{\bm k}(\bm x;\bm t + \tau \bm e_r) \widetilde{\mathcal{B}}(\bm x;\bm t + \tau \bm e_r) 
\Omega \d^M \bm x. 
\end{equation}
We first claim that \eqref{Psiwithtaushift} is also true 
if $r=k_j$ for some $j \in \{1,\ldots,M\}$.
We can apply the above procedure 
involving Cauchy's theorem 
(without the subsequent variable substitution) 
with $z = t_{k_j} + \gamma_{j} + \tau$.
A subtlety arises if $n_{\bm k}(r)>1$, 
in which case Cauchy's theorem needs to be 
successively applied for all integration variables $x_j$ with $k_j=r$ 
starting with the variable $x_j$ with $j=i_{\bm k}(1;r)$ and 
working our way up to $i_{\bm k}(n_{\bm k}(r);r)$.
Hence, at each step we must assume that 
$x_i \in t_{k_i}+\gamma_{i} + [0,\pi] \sqrt{-1}$ if $k_i=k_j$ and $i<j$, and $x_i \in  t_{k_i}+\gamma_{i}+\tau + [0,\pi] \sqrt{-1}$ if $k_i=k_j$ and $i>j$.
To show that Cauchy's theorem indeed can be applied, we use the following
lemma. 

\begin{lem} \label{lem:integrandanalyticinstrip}
Let $\bm k\in I_{M,N}$. Let
$(\bm \ell,\eta) \in \mathcal{D}^{\bm k}_{M,N}$
and $\tau\in\C$ satisfying $\Re(\tau)<0$ and \eqref{tauinequality}. 
Choose $\bm \gamma \in \Gamma^{\bm k}_{M,N;\tau}$.
Let $1 \le j \le M$. Fix $\bm t \in \widetilde{\mathbb{A}}_\tau$, $x_i \in t_{k_i}+\gamma_{i} + [0,\pi] \sqrt{-1}$ if $i < j$ 
and $x_i \in t_{k_i} + \gamma_{i} + \delta_{k_i,k_j} \tau + [0,\pi] \sqrt{-1}$ 
if $i > j$, then
\[\frac{w_{\bm k}(\bm x;\bm t+ \tau \bm e_{k_j})}{\Phi_{\bm k}(\bm t+\tau\bm
e_{k_j})}\widetilde{\mathcal{B}}(\bm{x};\bm t+ \tau \bm e_{k_j}) \Omega \]
as a function of $x_j$ is $\pi \sqrt{-1}$-periodic and has no poles in $S_{\tau}(t_{k_j}+\gamma_{j} + \tau)$.
\end{lem}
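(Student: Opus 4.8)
The statement has two parts. The periodicity is immediate: $\Phi_{\bm k}(\bm t+\tau\bm e_{k_j})$ is independent of $\bm x$, while $w_{\bm k}(\bm x;\bm t+\tau\bm e_{k_j})$ (by \eqref{wformula}) and $\widetilde{\mathcal B}(\bm x;\bm t+\tau\bm e_{k_j})\Omega$ (by the remark after Proposition \ref{thm:Bethevectordecomposition}) are $\pi\sqrt{-1}$-periodic in $x_j$, so the whole integrand is. The content of the lemma is thus the absence of $x_j$-poles in the strip $S_\tau(t_{k_j}+\gamma_j+\tau)$, whose boundaries have real parts $\Re(t_{k_j}+\gamma_j+\tau)$ (left) and $\Re(t_{k_j}+\gamma_j)$ (right).

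The plan is to invoke Lemma \ref{lem:poles} with $\bm t$ replaced by $\bm t+\tau\bm e_{k_j}$ and with the prescribed values of $x_i$, $i\neq j$, substituted, so that the $x_j$-poles lie in $(P^+_{\bm k;j}-\tau\mathbb{Z}_{\geq 0})+\pi\sqrt{-1}\mathbb{Z}$ together with $(P^-_{\bm k;j}+\tau\mathbb{Z}_{\geq 0})+\pi\sqrt{-1}\mathbb{Z}$, evaluated at the shifted spectral parameter. Because $\Re(\tau)<0$, every member of a $P^+$-sequence has real part at least that of its base point, and every member of a $P^-$-sequence has real part at most that of its base point (the $\pi\sqrt{-1}\mathbb{Z}$-shifts leaving real parts unchanged). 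It therefore suffices to show that each $P^+$-base point lies strictly to the right of the strip, i.e. has real part exceeding $\Re(t_{k_j}+\gamma_j)$, and each $P^-$-base point lies strictly to the left, i.e. has real part below $\Re(t_{k_j}+\gamma_j+\tau)$. I would verify this case by case; for the base points coming from the fixed $x_i$ ($i\neq j$), from $-\widetilde\xi_\pm$, and from the $t_s$ with $s\neq k_j$, the required strict inequalities are exactly those of the holomorphicity proof of part {\bf a}, now with the extra $-\Re(\tau)$ of margin supplied by $\widetilde{\mathbb{A}}_\tau$ and $\Gamma^{\bm k}_{M,N;\tau}$, and they follow from $\bm t\in\widetilde{\mathbb{A}}_\tau$, $(\bm\ell,\eta)\in\mathcal{D}^{\bm k}_{M,N}$ and $\bm\gamma\in\Gamma^{\bm k}_{M,N;\tau}$.

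The main obstacle, and the reason for restricting to $\widetilde{\mathbb{A}}_\tau$ and $\Gamma^{\bm k}_{M,N;\tau}$, is the pair of pole families attached to $s=k_j$, which are precisely those pulled towards the strip by the shift $t_{k_j}\mapsto t_{k_j}+\tau$. The $P^+$-base point $t_{k_j}+\tau+\ell_{k_j}\eta$ lies to the right of the strip exactly when $\Re(\gamma_j)<\Re(\ell_{k_j}\eta+\tau)$, the sharpened upper bound defining $\Gamma^{\bm k}_{M,N;\tau}$ (the weaker bound $\Re(\gamma_j)<\Re(\ell_{k_j}\eta)$ of $\Gamma^{\bm k}_{M,N}$ would not suffice), while the $P^-$-base point $t_{k_j}+\tau-\ell_{k_j}\eta$ lies to the left exactly when $\Re(\gamma_j)>-\Re(\ell_{k_j}\eta)$. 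Likewise the poles at $x_i+\eta$ with $i>j$ and $k_i=k_j$, where $x_i$ itself carries a $\tau$-shift, are controlled only by the sharpened within-block spacing $\Re(\gamma_{i(s;r)})-\Re(\gamma_{i(s+1;r)})>\Re(\eta-\tau)$ of $\Gamma^{\bm k}_{M,N;\tau}$, and the poles at $-\widetilde\xi_\pm$ use precisely the additional $-\Re(\tau)$ in the lower bound on $\Re(t_N)$ that distinguishes $\widetilde{\mathbb{A}}_\tau$ from $\widetilde{\mathbb{A}}$. Throughout, the fact that $\Gamma^{\bm k}_{M,N;\tau}$ is nonempty --- guaranteed by $(\bm\ell,\eta)\in\mathcal{D}^{\bm k}_{M,N}$ together with \eqref{tauinequality} --- ensures these refined constraints are consistent, so the argument is not vacuous.
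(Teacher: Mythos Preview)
Your approach is the paper's own: invoke Lemma~\ref{lem:poles} at the shifted parameter, then check that every $P^+$-base point has real part $>\Re(t_{k_j}+\gamma_j)$ and every $P^-$-base point has real part $<\Re(t_{k_j}+\gamma_j+\tau)$, using $\bm\gamma\in\Gamma^{\bm k}_{M,N;\tau}$ and $\bm t\in\widetilde{\mathbb A}_\tau$. One bookkeeping slip: for $i>j$ with $k_i=k_j$ the pole $x_i+\eta\in P^-$ carries the extra $\tau$ from $x_i$, which cancels against the $\tau$ in the left edge of the strip, so the needed inequality is only $\Re(\gamma_j-\gamma_i)>\Re(\eta)$, not the sharpened $\Re(\eta-\tau)$; in fact the paper notes that the \emph{only} place in this lemma where $\Gamma^{\bm k}_{M,N;\tau}$ (as opposed to $\Gamma^{\bm k}_{M,N}$) is genuinely required is the single bound $\Re(\gamma_j)<\Re(\ell_{k_j}\eta+\tau)$ coming from the $P^+$-pole at $t_{k_j}+\tau+\ell_{k_j}\eta$. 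The sharpened spacing is used later, in Lemma~\ref{lem:Zanalyticinstrip}.
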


\begin{proof}
The periodicity condition follows immediately from the analogous properties of $w_{\bm k}(\bm x;\bm t)$ and $\widetilde{\mathcal{B}}(\bm{x};\bm t) \Omega$. 
Using Lemma \ref{lem:poles}, the desired result on the poles holds if the real parts of all elements of $P^+_{\bm k;j}(\bm t + \tau \bm e_{k_j})$ are strictly greater than $\Re(t_{k_j}+\gamma_{j})$ and the real parts of all elements of $P^-_{\bm k;j}(\bm t+ \tau \bm e_{k_j})$ are strictly less than $\Re(t_{k_j}+\gamma_{j}+\tau)$. 
This imposes the inequalities
\[
\begin{array}{r@{\hspace{3pt}}ll} 
\Re(\gamma_{j}) &< \Re(\ell_{k_j} \eta + \tau), \\
-\Re(\gamma_{j}) &< \Re(\ell_{k_j} \eta), \\
\Re(\gamma_{i}- \gamma_{j}) &> \Re(\eta) , & \text{for } i<j \text{ and } k_i=k_j, \\ 
\Re(\gamma_{j}- \gamma_{i}) &> \Re(\eta) , & \text{for } i>j \text{ and } k_i=k_j, \\
\Re(t_s - t_{k_j}) &> \Re(\gamma_{j} - \ell_s \eta) , & \text{for } 1 \leq s < k_j, \\
\Re(t_{k_j} - t_s) &> \Re(-\gamma_{j} - \ell_s \eta - \tau) , & \text{for } k_j < s \leq N, \\
\Re(t_s+t_{k_j}) &> \Re(-\gamma_{j} - \ell_s \eta - \tau) , & \text{for } 1 \leq s \leq N, \\
\Re(t_{k_i} - t_{k_j}) &> \Re(\gamma_{j} - \gamma_{i} + \eta)  , & \text{for } i<j \text{ and } k_i<k_j, \\
\Re(t_{k_j} - t_{k_i}) &> \Re(\gamma_{i} -  \gamma_{j} + \eta-\tau)  , & \text{for } i>j \text{ and } k_i>k_j, \\
\Re(t_{k_i}+t_{k_j}) &> \Re(-\gamma_{i}- \gamma_{j} + \eta-\tau) , & \text{for } i \ne j \text{ and } k_i \ne k_j, \\
\Re(2t_{k_j}) &> \Re(-\gamma_{i}- \gamma_{j} + \eta-\tau) , & \text{for } i < j \text{ and } k_i = k_j, \\
\Re(2t_{k_j}) &> \Re(-\gamma_{i}- \gamma_{j} + \eta-2\tau) , & \text{for } i > j \text{ and } k_i = k_j, \\
\Re(t_{k_j}) &> \Re(-\widetilde \xi_+ - \tau  - \gamma_{j} ), \\
\Re(t_{k_j}) &> \Re(-\widetilde \xi_- - \tau - \gamma_{j}).
\end{array}
\]
The inequalities independent of $\bm t$ immediately follow from 
$\bm \gamma \in \Gamma_{M,N;\tau}^{\bm k}$  (the inequality $\Re(\gamma_{j}) < \Re(\ell_{k_j} \eta + \tau)$ is the only instance in this proof where we actually require $\bm \gamma \in \Gamma_{M,N;\tau}^{\bm k}$ as opposed to $\bm \gamma \in \Gamma_{M,N}^{\bm k}$).
For the inequalities involving $\bm t$ we also need the condition $|\Re(\gamma_{j})|<\Re(\ell_{k_j}\eta)$ and in addition conditions implied by $\bm t \in \widetilde{\mathbb A}_\tau$. 
Namely, the first three inequalities involving $\bm t$ follow from the conditions $\Re(t_r-t_{r+1})>2\Lambda_+ -\Re(\tau)$ for $1 \leq r < N$ and $\Re(t_N)>0$. 
The inequalities involving $t_{k_i} \pm t_{k_j}$ are a consequence of the conditions $\Re(t_r-t_{r+1}) > 2 \Lambda_+ + \Re(\eta-\tau)$, $\Re(t_N) > \Lambda_+ + \Re(\tfrac{\eta}{2}-\tau)$, $\Re(t_N)>0$ (and $\Re(\tau)<0$), and the final two inequalities are a consequence of $\Re(t_N) > \Lambda_+ + \max\bigl( \Re(-\widetilde \xi_+), \Re(-\widetilde \xi_- ) \bigr) - \Re(\tau)$. 
\end{proof}

Applying Lemma \ref{lem:integrandanalyticinstrip} for $j$ running through $\{ i \in \{1 ,\ldots,M\} \, | \, k_i = r\}$ from low to high values, we obtain
\eqref{Psiwithtaushift} 
Hence the two sides of the boundary qKZ equation \eqref{bqKZ} for $\Psi(\bm t) = F_{\bm k}(\bm t)$ have the same contour $C_{\bm k}^{\bm \gamma}(\bm t)$ and differ in the integrands in the same way as the summands did at the start of the proof in \cite[Section 7]{RSV2}.
Thus we follow that proof taking care to correctly use Cauchy's theorem when dealing with replacements $x_j \to x_j - \tau$.
We recall that the strategy is to shift the integration variable by $\tau$ in appropriate terms in \eqref{Psiwithtaushift}; these terms are essentially due to the expansion of $\widetilde{\mathcal{B}}(\bm x;\bm t)$ by means of \eqref{bBdecomposition}. 
Next, we use the conditions \eqref{wfunceqns} on the weight function $w_{\bm k}$ to establish that the terms on the left- and right-hand sides of \eqref{Psiwithtaushift} can be matched.

We need the following functional equations
\begin{align*}
F(x;\bm t + \tau \bm e_r) &= 
\e^{4 \ell_r \eta} \frac{\sinh(t_r \pm  x - \ell_r \eta + \tau)}{\sinh(t_r \pm  x + \ell_r \eta + \tau)} F(x;\bm t), \displaybreak[2] \\
F(x-\tau;\bm t) &= \biggl( \prod_{s=1}^N  \frac{\sinh(t_s - x -\ell_s \eta + \tau)}{\sinh(t_s - x + \ell_s \eta + \tau)} \frac{\sinh(t_s+ x+\ell_s \eta)}{\sinh(t_s+x-\ell_s \eta)} \biggr) F(x;\bm t), \displaybreak[2] \\
g(x-\tau) &= q^{-2} \e^{-2(\widetilde \xi_+ + \widetilde \xi_-)}   \frac{\sinh(x+\widetilde \xi_+)}{\sinh(x-\widetilde \xi_+ -\tau)} \frac{\sinh(x+\widetilde \xi_- )}{\sinh(x- \widetilde \xi_- - \tau)} g(x), \displaybreak[2] \\
h(x- \tau) &= \e^{2 \eta} \frac{\sinh(x- \tau)}{\sinh(x)} \frac{\sinh(x- \eta)}{\sinh(x+\eta-\tau)} h(x), \displaybreak[2]  \\
u_{\bm k;i}(x;\bm t+\tau \bm e_r) &= u_{\bm k;i}(x;\bm t) \times \begin{cases} 1 & \text{if } r<k_i, \\ \e^{2(\xi_++\xi_--\eta)+4(\sum_{s>k_i} \ell_s - M + i)\eta} & \text{if } r=k_i, \\ \e^{-4 \ell_r \eta} & \text{if } r > k_i, \end{cases} \displaybreak[2]  \\
u_{\bm k;i}(x-\tau;\bm t) &= q^2 \e^{2(\widetilde \xi_+ + \widetilde \xi_- )-4(M-i)\eta} u_{\bm k;i}(x;\bm t)
\end{align*}
where 
$k,r \in \{1,\ldots, N\}$ and $i \in \{1,\ldots,M\}$.
From these and \eqref{Phifunceqn} one derives
\begin{equation} \label{wfunceqns} \begin{aligned}
\frac{w_{\bm k}(\bm{x};\bm{t}+\tau \bm e_r)}{w_{\bm k}(\bm{x};\bm{t})} &= \biggl( \prod_{i=1}^M \frac{\sinh(t_r \pm x_i - \ell_r \eta + \tau)}{\sinh(t_r \pm x_i + \ell_r \eta + \tau)} \biggr) ,\\
\frac{w_{\bm k}(\bm{x}-\tau \bm e_j;\bm{t})}{w_{\bm k}(\bm{x};\bm{t})} &= \biggl( \prod_{s=1}^N \frac{\sinh(t_s + x_j + \ell_s \eta)}{\sinh(t_s + x_j - \ell_s \eta)} \frac{\sinh(t_s - x_j - \ell_s \eta + \tau)}{\sinh(t_s - x_j + \ell_s \eta + \tau)}  \biggr) \\
& \qquad \times \frac{\sinh(x_j + \widetilde \xi_+ )}{\sinh(x_j - \widetilde \xi_+ - \tau)} \frac{\sinh(x_j + \widetilde \xi_- )}{\sinh(x_j - \widetilde \xi_- -\tau )}  \\
& \qquad \times \biggl( \prod_{i \ne j} \frac{\sinh(x_j \pm x_i - \tau)}{\sinh(x_j \pm x_i)} \frac{\sinh(x_j \pm x_i - \eta)}{\sinh(x_j \pm x_i + \eta - \tau)} \biggr).
\end{aligned} \end{equation}
We emphasize that these are the same equations as those satisfied by the weight function $w(\bm x;\bm t)$ appearing in the Jackson integral in \cite[Thm. 6.2]{RSV2}; in particular they do not depend on $\bm k$.

We denote $e_r \bm t = (t_1,\ldots,t_{r-1},-t_r,t_{r+1},\ldots,t_N)$ for $\bm t = (t_1,\ldots,t_N) \in \C^N$ and $r \in \{1,\ldots,N\}$ and $J^{\c} = \{1,\ldots,M\} \setminus J$ for $J \subseteq \{1,\ldots,M\}$.
Define, for $\bm \epsilon \in \{ \pm \}^M$, $J \subseteq \{1,\ldots,M\}$, 
$r \in \{1,\ldots,N\}$ and $\bm x \in C_{\bm k}^{\bm \gamma}(\bm t)$:
\begin{align*}
m_r^{\bm \epsilon,J}(\bm x;\bm t) &:=
(-1)^{\# J^\c} \Biggl( \prod_{i =1}^M \epsilon_i \sinh(\widetilde \xi_- - \epsilon_i x_i) \prod_{s \ne r} \frac{\sinh(t_s - \epsilon_i x_i + \ell_s \eta)}{\sinh(t_s - \epsilon_i x_i -\ell_s \eta)} \Biggr) \\
& \qquad \times \biggl( \prod_{i' \in J^\c} \frac{1}{\sinh(t_r+\epsilon_{i'}  x_{i'} + \ell_r \eta)} \biggr) \biggl( \prod_{i \in J \atop i' \in J^\c} \frac{\sinh(\pm x_i + \epsilon_{i'} x_{i'} + \eta)}{\sinh(\pm x_i + \epsilon_{i'} x_{i'})} \biggr) \\
& \qquad \times \biggl( \prod_{i<i'\atop i,i'\in J \text{ or } i,i'\in J^\c} \frac{\sinh(\epsilon_i x_i + \epsilon_{i'} x_{i'} + \eta)}{\sinh(\epsilon_i x_i + \epsilon_{i'} x_{i'} )} \biggr),
\end{align*}
which defines a $\pi \sqrt{-1}$-periodic function of each $x_j$ with $j \in J^\c$;
this is the same function as in \cite[Eqn. (7.8)]{RSV2}. 

We require some further notation.
For $d \in \{0,\ldots,M\}$ consider a subset $J \subseteq \{1,\ldots,M\}$ 
of cardinality $M-d$. 
Write $J =  \{ i_1,\ldots,i_{M-d} \}$ with 
$1 \leq i_1<i_2<\ldots<i_{M-d} \leq M$.
Then for given $\bm k = (k_1,\ldots,k_M) \in I_{M,N}$, $\bm x = (x_1,\ldots,x_M) \in C_{\bm k}^{\bm \gamma}(\bm t)$ and $\bm \epsilon = (\epsilon_1,\ldots,\epsilon_M) \in \{ \pm \}^M$, we denote $\bm k_J := (k_{i_1},\ldots,k_{i_{M-d}})$, $\bm x_J := (x_{i_1},\ldots,x_{i_{M-d}})$ and $\bm \epsilon_J := (\epsilon_{i_1},\ldots,\epsilon_{i_{M-d}})$. 
Note that $\bm x_J \in C_{\bm k_J}(\bm t)$.

For the remainder of this subsection, we fix $r \in \{1,\ldots,N\}$, $d \in \{0,\ldots,M\}$, $J \subset \{1,\ldots,M\}$ such that $\# J = M-d$ and the subtuples $\bm k_J$, $\bm x_J$ and $\bm \epsilon_J$ as above.
For $\bm \epsilon_{J^\c} \in \{\pm \}^d$, set
\begin{align*}
\Lambda_{r,\bm \epsilon_{J^\c}}(\bm t) &:= \int_{C_{\bm k_{J^\c}}(\bm t)} w_{\bm k}(\bm x;\bm t) m_r^{\bm \epsilon,J}(\bm x;e_r \bm t) \d^d \bm x_{\bm J^\c}, \\
\Upsilon_{r,\bm \epsilon_{J^\c}}(\bm t) &:= \int_{C_{\bm k_{J^\c}}(\bm t)} w_{\bm k}(\bm x;\bm t) m_r^{\bm \epsilon,J}(\bm x;\bm t + \tau \bm e_r) \d^d \bm x_{\bm J^\c}.
\end{align*}
They are the analogons of their namesakes introduced in \cite[Sec. 7.3]{RSV2} with the summation over $\bm x_{\bm J^\c} \in \tau \Z^d$ replaced by an integral over $\bm x_{\bm J^\c} \in C_{\bm k_{J^\c}}(\bm t)$.

A careful inspection of the proof in \cite[Section 7]{RSV2} shows that the only statement which does not immediately generalize from the Jackson integral case to the integral case is in fact in the proof of \cite[Lemma 7.6]{RSV2} where in the Jackson integral a certain variable $x_j$ in the summand is replaced by $x_j - \tau$. 
In order to adapt that proof to one suitable for our integral solutions, 
we apply the argument based on Cauchy's theorem again.
Hence, the following lemma replaces the start of the
 proof of \cite[Lemma 7.6]{RSV2}.

\begin{lem} \label{lem:Zanalyticinstrip}
Let $\bm k\in I_{M,N}$. Let
$(\bm \ell,\eta) \in \mathcal{D}^{\bm k}_{M,N}$
and $\tau\in\C$ satisfying $\Re(\tau)<0$ and \eqref{tauinequality}. 
Choose $\bm \gamma \in \Gamma^{\bm k}_{M,N;\tau}$.
Let $j \in J^\c$ such that $\epsilon_j  = +$.
For fixed $\bm t \in \widetilde{\mathbb{A}}_\tau$ and 
$x_i \in t_{k_i}+\gamma_{i} + [0,\pi]\sqrt{-1}$ ($i \ne j$), the functions 
\begin{align*}
x_j & \mapsto w_{\bm k}(\bm x;\bm t) m_r^{\bm \epsilon,J}(\bm x;e_r \bm t), \\
x_j & \mapsto w_{\bm k}(\bm x;\bm t) m_r^{\bm \epsilon,J}(\bm x;\bm t + \tau \bm e_r)
\end{align*}
are $\pi \sqrt{-1}$-periodic and have no poles in $S_\tau(t_{k_j}+\gamma_{j})$.
\end{lem}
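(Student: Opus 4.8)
The plan is to mimic the proof of Lemma \ref{lem:integrandanalyticinstrip}, now with the factor $m_r^{\bm\epsilon,J}(\bm x;\cdot)$ in place of the Bethe vector and with the strip $S_\tau(t_{k_j}+\gamma_j)$ in place of $S_\tau(t_{k_j}+\gamma_j+\tau)$. The $\pi\sqrt{-1}$-periodicity in $x_j$ is immediate: $w_{\bm k}(\bm x;\bm t)$ is $\pi\sqrt{-1}$-periodic in each $x_j$ (as noted after \eqref{wformula}) and $m_r^{\bm\epsilon,J}(\bm x;\cdot)$ is $\pi\sqrt{-1}$-periodic in each $x_j$ with $j\in J^\c$ (as noted after its definition), both properties being insensitive to the second argument. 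It therefore remains to locate the poles of the product as a function of $x_j$ and to check that none have real part in the interval $[\Re(t_{k_j}+\gamma_j),\Re(t_{k_j}+\gamma_j-\tau)]$.

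First I would read off the poles of $x_j\mapsto m_r^{\bm\epsilon,J}(\bm x;\cdot)$ from its explicit product of $\sinh$-factors, treating the two second arguments $e_r\bm t$ and $\bm t+\tau\bm e_r$ in parallel. With $\epsilon_j=+$ the denominators produce poles (modulo $\pi\sqrt{-1}\Z$) at $x_j\in\{t_s-\ell_s\eta\}_{s\ne r}$, at $x_j\in\{\pm x_i\}_{i\in J}\cup\{-\epsilon_i x_i\}_{i\in J^\c\setminus\{j\}}$, and at one further point coming from the factor $1/\sinh(t_r+\epsilon_j x_j+\ell_r\eta)$: after the substitution $t_r\mapsto -t_r$ this point is $x_j=t_r-\ell_r\eta$, and after $t_r\mapsto t_r+\tau$ it is $x_j=-t_r-\tau-\ell_r\eta$; the factor $\sinh(\widetilde\xi_--\epsilon_j x_j)$ only contributes a zero. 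I would then invoke \eqref{wformula} to describe the poles and zeros of $x_j\mapsto w_{\bm k}(\bm x;\bm t)$, exactly as in the proof of Lemma \ref{lem:poles}.

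Next comes the cancellation bookkeeping, again following Lemma \ref{lem:poles}. The simple zeros of $w_{\bm k}$ at $x_j=\pm x_i$ ($i\ne j$) cancel all the $x_i$-dependent poles of $m_r^{\bm\epsilon,J}$; the zeros of $w_{\bm k}$ at $x_j=t_s-\ell_s\eta$ for $s<k_j$ (coming from $\prod_{s<k_j}(\e^{2(x_j-t_s+\ell_s\eta)};q^2)_\infty$) cancel the corresponding $m_r$-poles, and in the $e_r\bm t$ case also the special pole $t_r-\ell_r\eta$ whenever $r<k_j$. What survives is the collection of $w_{\bm k}$-ladders running to $\pm\infty$ in real part together with the $m_r$-poles at $\{t_s-\ell_s\eta\}_{s\ge k_j}$ and the surviving special pole. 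I would then translate the requirement that none of these lie in $S_\tau(t_{k_j}+\gamma_j)$ into inequalities on real parts. The ladders going to $+\infty$ (led by $x_j=t_{k_j}+\ell_{k_j}\eta$ and, for $k_i=k_j$, by $x_j=x_i-\eta$) must clear the right edge $\Re(t_{k_j}+\gamma_j-\tau)$; the binding instances yield precisely the defining inequalities $\Re(\gamma_j)<\Re(\ell_{k_j}\eta+\tau)$ and $\Re(\gamma_i-\gamma_j)>\Re(\eta-\tau)$ of $\Gamma^{\bm k}_{M,N;\tau}$, while the rest follow from the spacing built into $\widetilde{\mathbb{A}}_\tau$. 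The poles running to $-\infty$ must stay left of the edge $\Re(t_{k_j}+\gamma_j)$; the binding instance $x_j=t_{k_j}-\ell_{k_j}\eta$ reproduces the bound $-\Re(\ell_{k_j}\eta)<\Re(\gamma_j)$ from $\Gamma^{\bm k}_{M,N}$, and the poles at $-t_s-\ell_s\eta$, $-\widetilde\xi_+$, $-\widetilde\xi_-$ and the special pole are controlled by $\widetilde{\mathbb{A}}_\tau$ together with \eqref{tauinequality}.

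The main obstacle is not any individual estimate but the careful propagation of the K-matrix-type factor $1/\sinh(t_r+\epsilon_j x_j+\ell_r\eta)$ through the two distinct spectral substitutions $e_r\bm t$ and $\bm t+\tau\bm e_r$, keeping track of which of its poles are annihilated by zeros of $w_{\bm k}$ and which survive. The hypothesis $\epsilon_j=+$ fixes the sign of $x_j$ uniformly across $m_r^{\bm\epsilon,J}$, which is what places the surviving poles at the locations listed above. The remaining delicate feature is that $S_\tau(t_{k_j}+\gamma_j)$ has full width $-\Re(\tau)$, so the $+\infty$-ladders must clear a right edge lying $-\Re(\tau)$ beyond the contour; this is exactly why the $\tau$-refined conditions \eqref{tauinequality} and the tightened base-point set $\Gamma^{\bm k}_{M,N;\tau}$ are required in place of $\Gamma^{\bm k}_{M,N}$. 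Matching the collected inequalities against the definitions of $\widetilde{\mathbb{A}}_\tau$, $\Gamma^{\bm k}_{M,N;\tau}$ and \eqref{tauinequality} then yields holomorphicity in $S_\tau(t_{k_j}+\gamma_j)$.
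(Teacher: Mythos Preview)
Your proposal is correct and follows essentially the same approach as the paper: locate the $x_j$-poles of $m_r^{\bm\epsilon,J}$ using $\epsilon_j=+$, cancel the $\pm x_i$ and $t_s-\ell_s\eta$ ($s<k_j$) poles against zeros of $w_{\bm k}$ via \eqref{wformula}, and verify that the surviving ladders clear the strip $S_\tau(t_{k_j}+\gamma_j)$ using $\bm\gamma\in\Gamma^{\bm k}_{M,N;\tau}$ and $\bm t\in\widetilde{\mathbb{A}}_\tau$. One minor remark: the final appeal to \eqref{tauinequality} is not actually needed to verify the pole inequalities themselves (it only guarantees $\Gamma^{\bm k}_{M,N;\tau}\ne\emptyset$); the paper's proof checks each inequality using only $\bm\gamma\in\Gamma^{\bm k}_{M,N;\tau}$, $\Re(\tau)<0$, and $\bm t\in\widetilde{\mathbb{A}}_\tau$.
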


\begin{proof}
The periodicity statement follows from the fact that both $x_j \mapsto w_{\bm k}(\bm x;\bm t)$ and $x_j \mapsto  m_r^{\bm \epsilon,J}(\bm x; \bm t)$ are $\pi \sqrt{-1}$-periodic for any $\bm t$.

Using that $\epsilon_j = +$ we see that the poles of $m_r^{\bm \epsilon,J}(\bm x;e_r \bm t)$ as a function of $x_j$ are contained in the set
\[ \Bigl( \{ t_s - \ell_s \eta \, | \, 1 \leq s \leq N \}  \cup \{ \pm x_i \, | \, i \ne j \} \Bigr)+\pi \sqrt{-1} \Z. \]
Similarly, the poles of $m_r^{\bm \epsilon,J}(\bm x; \bm t + \tau \bm e_r)$ as a function of $x_j$ are contained in the set
\[ \Bigl( \{ t_s - \ell_s \eta  \, | \, 1 \leq s \leq N, \, s \ne r \} \cup \{ -t_r - \tau - \ell_r \eta \} \cup\{ \pm x_i \, | \, i \ne j \} \Bigr)+\pi \sqrt{-1} \Z. \]
Considering \eqref{wformula}, we see that all $x_j$-dependent poles are cancelled by zeros of $w_{\bm k}(\bm x;\bm t)$, as well as the poles of the form $t_s-\ell_s \eta$ for $s<k_j$.
It follows that all poles of $w_{\bm k}(\bm x;\bm t) m_r^{\bm \epsilon,J}(\bm x;e_r \bm t)$ and $w_{\bm k}(\bm x;\bm t) m_r^{\bm \epsilon,J}(\bm x;\bm t + \tau \bm e_r)$ are contained in
\[ \Bigl( (Q^+_{\bm k;j} - \tau \Z_{\geq 0}) \cup (Q^-_{\bm k;j} + \tau \Z_{\geq 0})  \Bigr)+\pi \sqrt{-1} \Z, \]
where
\begin{align*}
Q^+_{\bm k;j}(\bm t) &=  \{  t_s + \ell_s \eta  \}_{s \leq k_j } \cup \{ x_i - \eta  \}_{i<j} \\
Q^-_{\bm k;j}(\bm t) &= \{  t_s - \ell_s \eta \}_{s \geq k_j} \cup \{ -t_s -\ell_s \eta  -\tau \}_{s} \cup \{ -\widetilde \xi_+, -\widetilde \xi_- \} \cup \\
& \qquad  \cup \{ x_i + \eta \}_{i>j} \cup \{ - x_i + \eta \}_{i \ne j }.
\end{align*}
The conclusion of the lemma is justified if the real parts of all elements of $Q^+_{\bm k;j}$ are strictly greater than $\Re(t_{k_j} + \gamma_{j} - \tau)$ and the real parts of all elements of $Q^-_{\bm k;j}$ are strictly less than $\Re(t_{k_j} + \gamma_{j})$.
This yields the inequalities
\[
\begin{array}{r@{\hspace{3pt}}ll}  
\Re(\gamma_{j}) &> -\Re(\ell_{k_j}\eta) , \\
\Re(\gamma_{j}) &< \Re(\ell_{k_j} \eta + \tau),  \\
\Re(\gamma_{i}- \gamma_{j}) &> \Re(\eta-\tau), & \text{for } i<j \text{ and } k_i=k_j, \\ 
\Re(\gamma_{j}- \gamma_{i}) &> \Re(\eta-\tau), & \text{for } i>j \text{ and } k_i=k_j, \\
\Re(t_s - t_{k_j}) &> \Re(\gamma_{j} - \ell_s \eta - \tau), & \text{for } 1 \leq s < k_j, \\
\Re(t_{k_j} - t_s) &> \Re(-\gamma_{j} - \ell_s \eta), & \text{for } k_j < s \leq N, \\
\Re(t_s+t_{k_j}) &> \Re(-\gamma_{j} - \ell_s \eta -\tau), & \text{for } 1 \leq s \leq N, \\
\Re(t_{k_i} - t_{k_j}) &> \Re(\gamma_{j} -\gamma_{i}+ \eta - \tau) , & \text{for } i<j \text{ and } k_i<k_j, \\
\Re(t_{k_j} - t_{k_i}) &> \Re( \gamma_{i} -  \gamma_{j} + \eta) , & \text{for } i>j \text{ and } k_i>k_j, \\
\Re(t_{k_i}+t_{k_j}) &> \Re( -\gamma_{i}- \gamma_{j} + \eta), & \text{for } i \ne j \text{ and } k_i \ne k_j, \\
\Re(2t_{k_j}) &> \Re( -\gamma_{i}- \gamma_{j} + \eta), & \text{for } i \ne j \text{ and } k_i = k_j, \\
\Re(t_{k_j}) &> \Re(-\widetilde \xi_+  - \gamma_{j} ),  \\
 \Re(t_{k_j}) &> \Re( -\widetilde \xi_-  - \gamma_{j}).
\end{array}
\]
We get similar conditions as in the proof of Lemma \ref{lem:integrandanalyticinstrip}. 
However, the inequality for the differences on $\gamma_{j}$ is stronger as $\Re(\tau)<0$.
All these conditions are again a consequence of $\bm \gamma \in \Gamma_{M,N;\tau}^{\bm k}$, $\Re(\tau)<0$ and $\bm t \in \widetilde{\mathbb{A}}_\tau$. 
\end{proof}

In Fig. \ref{fig:complexplane} we illustrate a typical arrangement of poles with respect to the vertical strip $S_\tau(t_{k_i}+\gamma_i)$ as pertains to Lemma \ref{lem:Zanalyticinstrip}.\\

\begin{figure}[h]
\begin{tikzpicture}
\filldraw[fill=green!15,draw=white] (1.6+0.1,3.14) rectangle (1.6+0.1+1.3,-3.14);
\draw (1.6+0.1+1.3/2,2.8) node{$\scriptstyle S_\tau(t_1+\gamma_1)$};
\node (t1) at (1.6+0.1,-2.2) [circle,fill,inner sep=2pt,label={[label distance=-2pt]left:$\scriptstyle t_1+\gamma_1$}] {};
\node (t1+pi) at (1.6+0.1,-2.2+3.14) [circle,fill,inner sep=2pt,label={[label distance=-3pt]above:$\scriptstyle t_1+\gamma_1+\pi \sqrt{-1}$}] {};
\node (t1+pi+tau) at (1.6+0.1+1.3,-2.2-0.4+3.14) [circle,fill,inner sep=2pt,label={[label distance=-3pt]right:$\scriptstyle t_1 +\gamma_1- \tau + \pi \sqrt{-1}$}] {};
\node (t1+tau) at (1.6+0.1+1.3,-2.2-0.4) [circle,fill,inner sep=2pt,label={[label distance=-2pt]right:$\scriptstyle t_1 +\gamma_1- \tau$}] {};
\foreach \n in {0,1,2}
{
  \node at (1.6-1.5-1.3*\n,-2.2-0.4+0.4*\n)  {$\times$};
  \node at (1.6-1.5-1.3*\n,-2.2-0.4+3.14+0.4*\n) {$\times$};
}
\draw (1.6-1.5-.60,-2.2-0.3) node{$\scriptstyle t_1-\ell_1 \eta$};
\draw (1.6-1.5-1.10,-2.2-0.3+3.14) node{$\scriptstyle t_1-\ell_1 \eta+\pi \sqrt{-1}$};

\foreach \n in {0,1,2}
{
  \node at (1.6+1.5+1.3*\n,-2.2+0.4-0.4*\n) {$\times$};
  \node at (1.6+1.5+1.3*\n,-2.2+0.4+3.14-0.4*\n) {$\times$};
}
\draw (1.6+1.5+.50,-2.2+0.3+.25) node{$\scriptstyle t_1+\ell_1 \eta$};
\draw (1.6+1.5+1.00,-2.2+0.3+3.14+.25) node{$\scriptstyle t_1+\ell_1 \eta+\pi \sqrt{-1}$};

\foreach \n in {0}
{
  \node at (-1.6-1.5-1.3*\n,2.2-0.4+0.4*\n) {$\times$};
  \node at (-1.6-1.5-1.3*\n,2.2-0.4-3.14+0.4*\n) {$\times$};
}
\draw (-1.6-1.5,2.2-0.3+.20) node{$\scriptstyle -t_1-\ell_1 \eta$};
\draw (-1.6-1.5+0.50,2.2-0.3-3.14+.20) node{$\scriptstyle -t_1-\ell_1 \eta-\pi \sqrt{-1}$};

\draw[very thick,->] (t1) to (t1+pi);
\draw[->] (t1+pi) to (t1+pi+tau);
\draw[->] (t1+pi+tau) to (t1+tau);
\draw[->] (t1+tau) to (t1);
\draw[dotted] (0,-3.14) -- (0,3.14);
\draw[dotted] (-3.6,0) -- (6,0);
\end{tikzpicture}
\caption{Integration contour $t_1+\gamma_1 + [0,\pi] \sqrt{-1}$ and $t_1$-dependent poles of $x_1 \mapsto w_{\bm k}(\bm x ; \bm t) \widetilde{\mathcal{B}}(\bm x;\bm t) \Omega$ for $k_1 = 1$ and $\gamma_1$ satisfying $|\Re(\gamma_1)|< \Re(\ell_{k_1} \eta + \tau)$ as in the proof of Lemma \ref{lem:Zanalyticinstrip}. 
For each pole sequence, the pole closest to the vertical strip $S_\tau(t_1+\gamma_1)$ \eqref{strip}, which is the shaded area in the figure, is indicated.
The poles consist of (unilateral) sequences entirely to the left or entirely to the right of $S_\tau(t_1+\gamma_1)$.
In the vertical strip we have marked the closed contour with respect to which Cauchy's theorem is used in this subsection.}
\label{fig:complexplane}
\end{figure}
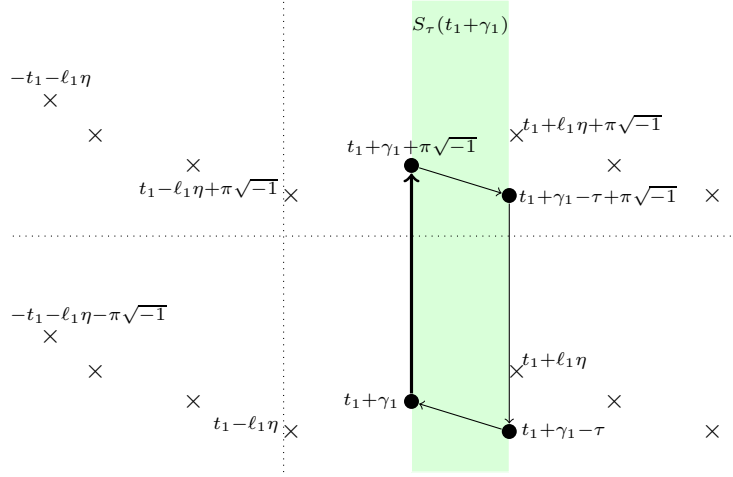

Let $\bm k\in I_{M,N}$ and $(\bm\ell,\eta)\in\mathcal{D}_{M,N}^{\bm k}$.
Following \cite{RSV2} it 
follows that for $\tau$ satisfying $\Re(\tau)<0$ and \eqref{tauinequality} 
that 
\begin{equation} \label{bqKZ:F}
F_{\bm k}(\bm t+ \tau \bm e_r) = \mathcal{A}_r(\bm t) F_{\bm k}(\bm t), \qquad  
1 \le r \le N
\end{equation}
as meromorphic functions in $\bm t\in \widetilde{\mathbb{A}}_\tau$. 

Next we show that the $V^{\bm\ell}(M)$-valued meromorphic 
function $F_{\bm k}=\Phi_{\bm k} \Theta_{\bm k}$ on 
$\widetilde{\mathbb{A}}_{\tau}$ uniquely extends to a meromorphic 
$V^{\bm\ell}(M)$-valued meromorphic function 
$\Psi_{\bm k}$ on $\C^N$ satisfying \eqref{bqKZ}.

Let $\{\mathcal{A}_{\alpha}(\bm t)\}_{{\alpha}\in\mathbb{Z}^N}$ be the unique 
family of linear operators on $V^{\bm\ell}$ 
depending meromorphically on $\bm t\in\mathbb{C}^N$ 
satisfying 
\begin{align*}
\mathcal{A}_{\alpha}(\bm t+{\beta}\tau)\mathcal{A}_{\beta}(\bm t)&=\mathcal{A}_{{\alpha}+{\beta}}(\bm t), && {\alpha},{\beta}\in\mathbb{Z}^N,\\
\mathcal{A}_{ 0}(\bm t)&=\textup{Id}_{V^{\bm\ell}},\\
\mathcal{A}_{\mathbf{e}_r}(\bm t)&=\mathcal{A}_r(\bm t), &&  r=1,\ldots,N,
\end{align*}
with $\mathcal{A}_r(\bm t)$ the transport operators 
\eqref{bqKZtransportmatrix} of the boundary qKZ equations (the compatibility
of the transport operators guarantees the existence of the cocycle
$\{\mathcal{A}_{\alpha}(\bm t)\}_{\alpha\in\mathbb{Z}^N}$). 

Consider the cone 
\[
P_+:=\{{ \lambda}\in\mathbb{Z}^N\,\, | \,\, \lambda_1\geq\lambda_2\cdots
\geq\lambda_N\geq 0\}.
\]
Note that $\bm t-{\lambda}\tau\in
\widetilde{\mathbb{A}}_\tau$ for $\bm t\in\widetilde{\mathbb{A}}_\tau$ and ${\lambda}\in P_+$.
It follows from \eqref{bqKZ:F}
that for all ${ \lambda}\in P_+$,
\begin{equation}\label{bqKZintegral}
F_{\bm k}(\bm t-{ \lambda}\tau)=\mathcal{A}_{-{ \lambda}}(\bm t)F_{\bm k}(\bm t)
\end{equation}
as meromorphic $V^{\bm\ell}(M)$-valued functions in 
$\bm t\in\widetilde{\mathbb{A}}_\tau$.
 
If $\Psi_{\bm k}$ is a $V^{\bm \ell}(M)$-valued meromorphic function 
on $\mathbb{C}^N$ satisfying the boundary qKZ equations and coinciding with 
$F_{\bm k}$ on $\widetilde{\mathbb{A}}_\tau$,
then 
\begin{equation}\label{extension}
\Psi_{\bm k}(\bm t)=\mathcal{A}_{\alpha}(\bm t-{\alpha}\tau)F_{\bm k}(\bm t-{\alpha}\tau)
\end{equation}
for $\bm t\in\mathbb{C}^N$ and 
${\alpha}\in\mathbb{Z}^N$ such that $\bm t-{\alpha}\tau\in
\widetilde{\mathbb{A}}_\tau$. Now \eqref{extension} can be used to prove 
the existence of the 
meromorphic extension of $F_{\bm k}$ to $\mathbb{C}^N$. For this we need to
show that the right hand side of \eqref{extension} does not depend on
the choice of ${\alpha}$ such that 
$\bm t-{\alpha}\tau\in\widetilde{\mathbb{A}}_\tau$. 

Suppose ${\beta}\in\mathbb{Z}^N$
also satisfies $\bm t-{\beta}\tau\in\widetilde{\mathbb{A}}_\tau$. 
Let ${ \lambda},{ \mu}\in P_+$ such that 
\[
{ \lambda}+\alpha={ \mu}+{\beta}\in P_+.
\]
Then repeated application of \eqref{bqKZintegral} gives
\begin{equation*}
\begin{split}
\mathcal{A}_{\beta}(\bm t-{\beta}\tau)F_{\bm k}(\bm t-{\beta}\tau)&=
\mathcal{A}_{\beta}(\bm t-{\beta}\tau)\mathcal{A}_{-{ \mu}}(\bm t-{\beta}\tau)^{-1}
F_{\bm k}(\bm t-({ \mu}+{ {\beta}})\tau)\\
&=\mathcal{A}_{\beta}(\bm t-{\beta}\tau)\mathcal{A}_{-{ \mu}}(\bm t-\beta\tau)^{-1}
\mathcal{A}_{-{ \lambda}}(\bm t-{\alpha}\tau)F_{\bm k}(\bm t-{\alpha}\tau)\\
&=\mathcal{A}_{\alpha}(\bm t-{\alpha}\tau)F_{\bm k}(\bm t-{\alpha}\tau),
\end{split}
\end{equation*}
where the last equality follows from the cocycle property. By a similar 
computation one shows that the resulting 
$V^{\bm\ell}(M)$-valued meromorphic function $\Psi_{\bm k}$
on $\mathbb{C}^N$ satisfies the boundary qKZ equations \eqref{bqKZ}.

The identity $\Psi_{\bm k}=F_{\bm k}$ on $\widetilde{\mathbb{A}}_\tau$
extends to $\widetilde{\mathbb{A}}$ by meromorphic continuation. Finally, 
the extra assumption \eqref{tauinequality} on the step size can be removed
by meromorphic continuation. This completes the proof of part {\bf b} of 
Theorem \ref{thm:main}.

\subsection{Asymptotics} 
\label{sec:asymptoticscompleteness}

In this subsection we prove part {\bf c} of Theorem \ref{thm:main}. 
Recall the notation $\bm t_{\bm k} := (t_{k_1},\ldots,t_{k_M}) \in \C^M$ for $\bm t \in \C^N$, $\bm k \in I_{M,N}$.
We will investigate the asymptotics of $\Psi_{\bm k}(\bm t)$ as 
$\bm t \stackrel{\mathbb{A}}{\to} \infty$
using the formula \eqref{integraltransformed}. 
It is convenient to study first the asymptotics of $\e^{\sum_i t_{k_i}} \frac{ w_{\bm k}(\bm t_{\bm k} + \bm y; \bm t)}{\Phi_{\bm k}(\bm t)}$ and of 
\[\e^{-\sum_i t_{k_i}} 
\widetilde{\mathcal{B}}(\bm t_{\bm k} + \bm y;\bm t) \Omega=
\e^{-\sum_it_{k_i}}\sum_{\bm m\in I_{M,N}}\beta_{\bm m}(\bm t_{\bm k}+\bm y; \bm t)
\Omega_{\bm m}.
\]

\begin{lem} \label{lem:wBasymptotics}
Let $\bm k \in  I_{M,N}$.
\begin{enumerate}
\item[{\bf a.}]
There exists unique continuous functions $\Delta_{\alpha}^{\bm k}$
on $C_{\bm k}^{\bm \gamma}(\bm 0)$ (${\alpha}\in Q_+$) such that 
\[
\e^{\sum_{i=1}^M t_{k_i}} \frac{w_{\bm k}(\bm t_{\bm k} + \bm y; \bm t)}
{\Phi_{\bm k}(\bm t)}=\sum_{{\alpha}\in Q_+}
\Delta_{\alpha}^{\bm k}(\bm y) \e^{-2({\alpha},\bm t)}
\]
for $(\bm y, \bm t)\in C_{\bm k}^{\bm \gamma}(\bm 0)\times\widetilde{\mathbb{A}}$, 
with the series converging normally on compact sets. 
Furthermore,
\begin{equation*}
\begin{split}
\Delta_0^{\bm k}(\bm y)&=  
\prod_{i=1}^M  \frac{\theta\bigl(\e^{2(y_i+\psi_{\bm k;i})}\bigr)}
{(\e^{2(y_i-\ell_{k_i} \eta)},q^2 \e^{-2(y_i + \ell_{k_i} \eta)};q^2)_\infty}\\
&\qquad\times 
\prod_{1\leq i<j\leq M \atop k_i=k_j} \bigl(1-\e^{2(y_j-y_i)}\bigr) 
\frac{(q^2 \e^{2(y_j-y_i-\eta)};q^2)_\infty}{(\e^{2(y_j-y_i+\eta)};q^2)_\infty}.
\end{split}
\end{equation*}
\item[{\bf b.}] Let $\bm m\in I_{M,N}$. There exists unique continuous 
functions $f_{\alpha}^{\bm k, \bm m}$ on $C_{\bm k}^{\bm \gamma}(\bm 0)$ (${\alpha}\in Q_+$) 
such that 
\[\e^{-\sum_{i=1}^M t_{k_i}}\beta_{\bm m}(\bm t_{\bm k}+\bm y;\bm t) 
=\sum_{{\alpha}\in Q_+}f_{\alpha}^{\bm k, \bm m}(\bm y) \e^{-({\alpha},\bm t)}
\]
for $(\bm y, \bm t)\in C_{\bm k}^{\bm \gamma}(\bm 0)\times\widetilde{\mathbb{A}}$, 
with the series converging
normally on compact sets. Furthermore,
\[
f_0^{\bm k, \bm m}(\bm y)=\delta_{\bm k,\bm m}
\prod_{i=1}^M  \frac{ - \e^{\xi_- + 2\sum_{s<k_i} \ell_s \eta}}{1-\e^{-2(y_i + \ell_{k_i} \eta)}}. \]
\end{enumerate}
\end{lem}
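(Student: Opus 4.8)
The plan is to prove both parts by substituting $x_i = y_i + t_{k_i}$ into the relevant explicit product formulas---formula \eqref{wformula} for part {\bf a} and the decomposition of Proposition \ref{thm:Bethevectordecomposition} for part {\bf b}---and then reading off the expansion as $\bm t\stackrel{\mathbb{A}}{\rightarrow}\infty$ factor by factor. The common mechanism is that, after the substitution, each elementary factor is either independent of $\bm t$ or is a ratio of $q^2$-shifted factorials (resp.\ of hyperbolic sines) whose argument has real part tending to $\pm\infty$ throughout $\widetilde{\mathbb{A}}$; such a factor admits a convergent expansion whose non-leading terms carry exponentials $\e^{-(\beta,\bm t)}$ with $\beta$ a nonnegative integer combination of the generators $\bm e_i-\bm e_{i+1}$ and $\bm e_N$ of $Q_+$, see \eqref{Qplus}. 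Multiplying the finitely many factors and collecting powers produces the asserted series indexed by $\alpha\in Q_+$; the coefficient functions are uniquely determined because the $\e^{-(\alpha,\bm t)}$ (resp.\ $\e^{-2(\alpha,\bm t)}$) are asymptotically independent on $\widetilde{\mathbb{A}}$, recoverable by iterated limits in the $\Re(t_s-t_{s+1})$ and $\Re(t_N)$, and normal convergence on compact subsets of $C^{\bm\gamma}_{\bm k}(\bm 0)\times\widetilde{\mathbb{A}}$ follows from uniform geometric bounds valid there.

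For part {\bf a} I would first cancel the prefactor $\e^{\sum_i t_{k_i}}$ against the factors $\e^{-t_{k_i}}$ appearing in \eqref{wformula}. The genuinely $\bm t$-independent factors that remain are the theta functions $\theta(\e^{2(y_i+\psi_{\bm k;i})})$, the $s=k_i$ contributions $(\e^{2(y_i-\ell_{k_i}\eta)};q^2)_\infty$ and $(q^2\e^{-2(y_i+\ell_{k_i}\eta)};q^2)_\infty$ from the single-variable part, and the $k_i=k_j$ cross factors; every other shifted factorial has argument of the form $\e^{-2(\text{positive combination of the }t_s-t_{s+1}\text{ and }t_N)}$, so it contributes only terms $\e^{-2(\alpha,\bm t)}$ with $\alpha\in Q_+$. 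Collecting the $\alpha=0$ part reproduces the stated $\Delta^{\bm k}_0(\bm y)$, after rewriting $(1-\e^{-2(y_i-y_j)})$ and its accompanying shifted-factorial ratio in terms of $y_j-y_i$.

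Part {\bf b} is the substantive one. Writing $\beta_{\bm m}$ via Proposition \ref{thm:Bethevectordecomposition} (so the inner summation runs over the rearrangements of $\bm m$) and substituting $x_i=y_i+t_{k_i}$, I would determine for each term of the double sum, over the inner index and over $\bm\epsilon\in\{\pm\}^M$, its leading exponential order in $\bm t$. A short case analysis of the single-variable factors shows that, after the prefactor $\e^{-\sum_i t_{k_i}}$, the combined $\bm t$-exponent of the $i$-th factor vanishes precisely when $\epsilon_i=-$ and the inner index equals $k_i$, and equals $-(\beta,\bm t)$ with $\beta\in Q_+\setminus\{0\}$ otherwise. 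Hence the $\alpha=0$ contribution can only arise from the single term with $\bm\epsilon=(-,\dots,-)$ and inner index $\bm k$; this index must lie in the $S_M$-orbit of $\bm m$, which, both tuples being sorted, forces $\bm m=\bm k$ and yields the Kronecker delta $\delta_{\bm k,\bm m}$. Evaluating the leading constants of the surviving factors, the two factors $\epsilon_i\sinh(\epsilon_i x_i-\widetilde\xi_-)$ and $\sinh(t_{k_i}+\epsilon_i x_i-\ell_{k_i}\eta)^{-1}$ combine (after the cancellation of $\e^{t_{k_i}}$) to $-\e^{\widetilde\xi_--\ell_{k_i}\eta}/(1-\e^{-2(y_i+\ell_{k_i}\eta)})$ per $i$, and I would then verify that the remaining $\eta$-powers coming from the ratios $\prod_{s>k_i}$ and $\prod_s$, the two families of cross terms, and the scalar prefactor $\e^{\sum_i(n_{\bm k}(k_i)/2-\ell_{k_i})\eta}$ telescope to give exactly $f^{\bm k,\bm m}_0(\bm y)$.

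The main obstacle I anticipate is precisely this bookkeeping in part {\bf b}: controlling the full double sum uniformly, so that the term-by-term expansions may legitimately be rearranged into a single series in $\e^{-(\alpha,\bm t)}$, and checking that the numerous $\eta$-dependent constants cancel. A point worth emphasizing is that, unlike in part {\bf a}, odd exponents $\e^{-(\alpha,\bm t)}$ genuinely occur here: within a single term the corrections sit at order $\e^{-2(\cdots)}$, but \emph{distinct} terms of the Bethe sum occupy different levels $\e^{-(\beta,\bm t)}$, and their interleaving accounts for the appearance of the full cone $Q_+$ with unit rather than doubled steps.
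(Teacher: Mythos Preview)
Your proposal is correct and follows essentially the same route as the paper's proof: substitute $x_i=y_i+t_{k_i}$ into \eqref{wformula} for part {\bf a} and into the formula of Proposition \ref{thm:Bethevectordecomposition} for part {\bf b}, then read off the $Q_+$-graded expansion factor by factor, isolating in part {\bf b} the single term $\bm p=\bm k$, $\bm\epsilon=(-)^M$ as the only one contributing at leading order. The paper organizes part {\bf b} by first isolating the quotient $\prod_i\epsilon_i\e^{-t_{k_i}}\sinh(\epsilon_i x_i-\widetilde\xi_-)/\sinh(t_{p_i}+\epsilon_i x_i-\ell_{p_i}\eta)$ and showing its leading coefficient carries the $\delta_{\bm p,\bm k}\delta_{\bm\epsilon,(-)^M}$, then carries out exactly the $\eta$-power bookkeeping you anticipate via the identity $\#\{(i,j)\mid k_i<k_j\}-\#\{(i,j)\mid i<j\}=\sum_i\frac{1-n_{\bm k}(k_i)}{2}$.
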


\begin{proof}
{\bf a.} This follows from the explicit expression
\begin{align*}
& \e^{\sum_{i=1}^M t_{k_i}} \frac{w_{\bm k}(\bm t_{\bm k} + \bm y; \bm t)}{\Phi_{\bm k}(\bm t)} =\\
&= \prod_{i=1}^M\Biggl\{\biggl(\prod_{s<k_i}
\frac{ ( \e^{2(y_i+t_{k_i} - t_s + \ell_s \eta)};q^2)_\infty}
{ ( \e^{2(y_i+t_{k_i} - t_s -\ell_s \eta)};q^2)_\infty}\biggr) 
\frac{ \theta\bigl(\e^{2(y_i+\psi_{\bm k;i})}\bigr) }
{(\e^{2(y_i-\ell_{k_i}\eta)},q^2 \e^{-2(y_i+\ell_{k_i}\eta)};q^2)_\infty}  \\
& \qquad\qquad \times \biggl( \prod_{s>k_i} 
\frac{ (q^2 \e^{2(t_s-t_{k_i}-y_i+\ell_s \eta)};q^2)_\infty}
{ (q^2 \e^{2(t_s-t_{k_i}-y_i-\ell_s \eta)};q^2)_\infty} \biggr) 
\biggl( \prod_{s=1}^N \frac{( \e^{-2(t_s+t_{k_i}+y_i-\ell_s \eta)};q^2)_\infty}
{( \e^{-2(t_s+t_{k_i}+y_i+\ell_s \eta)};q^2)_\infty} \biggr) \\
& \qquad\qquad \times  \frac{(q^2 \e^{2(\widetilde \xi_+ -t_{k_i}-y_i)},
q^2 \e^{2(\widetilde \xi_- -t_{k_i}-y_i)};q^2)_\infty}
{(\e^{-2(\widetilde \xi_++t_{k_i}+y_i)},
\e^{-2(\widetilde \xi_-+t_{k_i}+y_i)};q^2)_\infty} \Biggr\} \\
& \qquad \times 
\prod_{1\leq i<j\leq M}\Biggl\{  (1-\e^{-2( (y_i+t_{k_i})\pm (y_j+t_{k_j}))}) 
\frac{(q^2 \e^{-2((y_i+t_{k_i}) \pm (y_j+t_{k_j}) + \eta)};q^2)_\infty}
{(\e^{-2((y_i+t_{k_i}) \pm  (y_j+t_{k_j}) - \eta)};q^2)_\infty} \Biggr\}.
\end{align*}
{\bf b.} We use the explicit expression from Proposition
\ref{thm:Bethevectordecomposition}. It gives 
\begin{equation}\label{step1}
\begin{split}
&\e^{-\sum_i t_{k_i}}\beta_{\bm m}(\bm t_{\bm k}+\bm y;\bm t)=\\
&\e^{\sum_i \bigl(\tfrac{n_{\bm m}(m_i)}{2} - \ell_{m_i}\bigr) \eta}
\sum_{\bm p\in S_M\bm m}\,\,\sum_{\bm \epsilon \in \{\pm\}^M}\Biggl\{
\prod_{i=1}^M\Biggl\{\frac{  \epsilon_i  \e^{-t_{k_i}} 
\sinh( \epsilon_i t_{k_i} + \epsilon_i y_i  - \widetilde \xi_-)}
{\sinh( t_{p_i}+ \epsilon_i t_{k_i} +\epsilon_i y_i  - \ell_{p_i} \eta)}\\
& \hspace{3mm} \times  \biggl( \prod_{s>p_i} 
\frac{\sinh( t_s + \epsilon_i t_{k_i} +\epsilon_i y_i + \ell_s \eta)}
{\sinh(t_s + \epsilon_i t_{k_i} +\epsilon_i y_i - \ell_s \eta)} \biggr) 
\biggl( \prod_{s=1}^N \frac{\sinh(t_s - \epsilon_i y_i - \epsilon_i t_{k_i} + 
\ell_s \eta)}{\sinh(t_s - \epsilon_i y_i - \epsilon_i t_{k_i} - \ell_s \eta)}
 \biggr)  \Biggr\} \hspace{-8pt} \\
 & \qquad \times  \biggl( \prod_{1\leq i<j\leq M} \hspace{-2pt} 
\frac{\sinh(\epsilon_i t_{k_i}+  \epsilon_j t_{k_j} +\epsilon_i y_i + 
 \epsilon_j y_j + \eta)}{\sinh(\epsilon_i t_{k_i}+  \epsilon_j t_{k_j} +
\epsilon_i y_i +  \epsilon_j y_j )} \biggr) \\
 & \qquad \times  \biggl( \hspace{-2pt} \prod_{1\leq i<j\leq M\atop p_i<p_j} 
\hspace{-4pt} 
\frac{\sinh(\epsilon_i t_{k_i}- \epsilon_j t_{k_j} +\epsilon_i y_i  - 
\epsilon_j y_j - \eta)}{\sinh( \epsilon_i t_{k_i}- 
\epsilon_j t_{k_j} +\epsilon_i y_i  - \epsilon_j y_j )} \biggr)\biggr\}.
\end{split}
\end{equation}
For fixed $\bm p\in S_M\bm m$ and 
$\bm\epsilon\in\{\pm\}^M$, a direct computation 
shows that
\[
\prod_{i=1}^M\frac{  \epsilon_i  \e^{-t_{k_i}} 
\sinh( \epsilon_i t_{k_i} + \epsilon_i y_i  - \widetilde \xi_-)}
{\sinh( t_{p_i}+ \epsilon_i t_{k_i} +\epsilon_i y_i  - \ell_{p_i} \eta)}=
\sum_{{\alpha}\in Q_+}d_{\alpha}^{\bm p,\bm\epsilon}(\bm y) \e^{-({\alpha},\bm t)}
\]
with leading coefficient 
\[
d_0^{\bm p,\bm\epsilon}(\bm y)=\delta_{\bm p,\bm k}\delta_{\bm\epsilon,(-)^M}
\prod_{i=1}^M \frac{-\e^{\widetilde \xi_- -\ell_{k_i}\eta}}
{1-\e^{-2(y_i+\ell_{k_i}\eta)}} \]
and $(-)^M:=(-,\ldots,-)$ the $M$-tuple of minus signs. Here the convergence
is as indicated in the lemma.
For all the other factors in the right hand side of
 \eqref{step1} it is easy to compute the series expansion when 
$\bm t \stackrel{\mathbb{A}}{\to} \infty$. It leads to the result that
\[\e^{-\sum_{i=1}^M t_{k_i}}\beta_{\bm m}(\bm t_{\bm k}+\bm y;\bm t) 
=\sum_{{\alpha}\in Q_+}f_{\alpha}^{\bm k, \bm m}(\bm y) \e^{-({\alpha},\bm t)}
\]
with
\[
f_0^{\bm k, \bm m}(\bm y)=\delta_{\bm k, \bm m} \Biggl( \prod_{i=1}^M  \frac{-\e^{ \widetilde \xi_- + \bigl(\tfrac{n_{\bm k}(k_i)}{2} + 2 \sum_{s<k_i} \ell_s \bigr) \eta}}{1-\e^{-2(y_i+\ell_{k_i} \eta)}} \Biggr)  \e^{( \#\{ (i,j) \, | \, k_i < k_j \} - \#\{ (i,j) \, | \, i<j \}  ) \eta}
\]
and the convergence as indicated in the lemma.
Finally note that the expression of $f_0^{\bm k, \bm m}(\bm y)$ coincides
with the expression as given in the lemma because
\begin{equation*}
\begin{split}
\#\{ (i,j) \, | \, k_i<k_j \} - \#\{ (i,j) \, | \, i < j \} &= 
-\#\{ (i,j) \, | \, i < j, \, k_i=k_j \} \\
&  = \frac{M- \#\{ (i,j) \, |  \, k_i=k_j \} }{2}= \sum_{i=1}^M \frac{1-n_{\bm k}(k_i)}{2},
\end{split}
\end{equation*}
where we have used that $k_i \leq k_j$ if $i<j$.
\end{proof}

From Lemma \ref{lem:wBasymptotics} 
we immediately deduce that there exists unique continuous $V^{\bm\ell}(M)$-valued
functions $R_{\alpha}^{\bm k}$ on $C_{\bm k}^{\bm \gamma}(\bm 0)$  (${\alpha}\in Q_+$)
such that
\begin{equation}\label{almostthere}
\frac{w_{\bm k}(\bm t_{\bm k}+\bm y;\bm t)}{\Phi_{\bm k}(\bm t)}
\widetilde{\mathcal{B}}(\bm t_{\bm k}+\bm y;\bm t)\Omega=
\sum_{{\alpha}\in Q_+}R_{\alpha}^{\bm k}(\bm y) \e^{-({\alpha},\bm t)}
\end{equation}
for $(\bm y, \bm t)\in C_{\bm k}^{\bm \gamma}(\bm 0)\times\widetilde{\mathbb{A}}$
with the $V^{\ell}(M)$-valued series converging
normally on compact sets, and with leading coefficient given by
\begin{equation*}
\begin{split}
R_0^{\bm k}(\bm y)&=\sum_{\bm m\in I_{M,N}}\Delta_0^{\bm k}(\bm y)
f_0^{\bm k,\bm m}(\bm y)\Omega_{\bm m}\\
&=\Delta_0^{\bm k}(\bm y)\biggl( \prod_{i=1}^M  \frac{ - \e^{\xi_- + 2\sum_{s<k_i} \ell_s \eta}}{1-\e^{-2(y_i + \ell_{k_i} \eta)}}   \biggr)\Omega_{\bm k}\\
&=\Biggl( \prod_{i=1}^M \frac{-\e^{\xi_-+2\sum_{s<k_i} \ell_s \eta} \theta\bigl(\e^{2(y_i + \psi_{\bm k;i})}\bigr)}{(\e^{2(\pm y_i-\ell_{k_i} \eta)};q^2)_\infty} \Biggr)\\
&\qquad\qquad\times
\biggl(\prod_{1\leq i<j\leq M\atop k_i=k_j} \bigl(1-\e^{2(y_j-y_i)}\bigr) \frac{(q^2 \e^{2(y_j-y_i-\eta)};q^2)_\infty}{(\e^{2(y_j-y_i+\eta)};q^2)_\infty}\biggr)\Omega_{\bm k}.
\end{split}
\end{equation*}

It follows from \eqref{integraltransformed},
\eqref{almostthere} and Fubini's theorem 
that $\Theta_{\bm k}(\bm t)$ has a $V^{\bm\ell}(M)$-valued series expansion
\[
\Theta_{\bm k}(\bm t)=\sum_{{\alpha}\in Q_+}
L_{\alpha}^{\bm k} \e^{-({\alpha},\bm t)}
\]
normally converging for $\bm t$ in compact subsets of $\widetilde{\mathbb{A}}$ and 
with the coefficients given by
\[
L_{\alpha}^{\bm k}=\int_{C_{\bm k}^{\bm \gamma}(\bm 0)}R_{\alpha}^{\bm k}(\bm y)d^M\bm y,\qquad
{\alpha}\in Q_+.
\]
By the explicit expression of $R_0^{\bm k}(\bm y)$ we in particular have
\[
L_{0}^{\bm k}=\mu_{\bm k}\Omega_{\bm k}
\]
with $\mu_{\bm k}$ the explicit integral
\begin{equation} \label{muintegral}
\begin{aligned}
\mu_{\bm k}  &= \int_{C_{\bm k}^{\bm \gamma}(\bm 0)} \Biggl( \prod_{i=1}^M \frac{-\e^{\xi_-+
2\sum_{s<k_i} \ell_s \eta} \theta\bigl(\e^{2(y_i + \psi_{\bm k;i})}\bigr)}{(\e^{2(\pm y_i-\ell_{k_i} \eta)};q^2)_\infty} \Biggr) \\
& \hspace{15mm} \times \Biggl( \prod_{1\leq i<j\leq M \atop k_i=k_j} \bigl(1-\e^{2(y_j-y_i)}\bigr) \frac{(q^2 \e^{2(y_j-y_i-\eta)};q^2)_\infty}{(\e^{2(y_j-y_i+\eta)};q^2)_\infty} \Biggr)  \d^M \bm y.
\end{aligned}
\end{equation}
So to complete the proof of part {\bf c} of Theorem \ref{thm:main}, it suffices
to prove the following explicit evaluation formula for the integral
$\mu_{\bm k}$.

\begin{lem} \label{lem:mufactorization}
Let $\bm k \in  I_{M,N}$, $(\bm\ell,\eta)\in\mathcal{D}_{M,N}^{\bm k}$
and $\tau\in\mathbb{C}$ with $\Re(\tau)<0$.
Let $\bm \gamma \in \Gamma_{M,N}^{\bm k}$.
Then $\mu_{\bm k} = \nu_{\bm k}$, with $\nu_{\bm k}$ given by \eqref{nuproduct}.
\end{lem}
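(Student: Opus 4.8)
The plan is to evaluate the integral \eqref{muintegral} directly by residue calculus, after first exploiting its product structure to reduce to a collection of single-block integrals, and then summing the residues of each block by a classical $q$-series identity.

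\textbf{Factorization over blocks.} First I would observe that the only factors in the integrand of \eqref{muintegral} coupling two variables $y_i,y_j$ are the terms indexed by pairs with $k_i=k_j$. Hence, writing $\{1,\ldots,M\}$ as the disjoint union of the blocks $\{i_{\bm k}(1;r),\ldots,i_{\bm k}(n_{\bm k}(r);r)\}$ for $r\in\{1,\ldots,N\}$ (see \eqref{ibmk}), the integral factorizes as a product over $r$. The scalar prefactors $-\e^{\xi_-+2\sum_{s<k_i}\ell_s\eta}$ pull out of all integrals; since $\sum_i\sum_{s<k_i}\ell_s=\sum_{r<s}\ell_r n_{\bm k}(s)$, their product is $(-1)^M\e^{M\xi_-}\prod_{r<s}\e^{2\ell_r n_{\bm k}(s)\eta}$, which already accounts for the elementary factors in \eqref{nuproduct}. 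It then remains to evaluate, for each $r$ with $n:=n_{\bm k}(r)\geq 1$, the block integral
\[ \widehat I_r = \int \prod_{m=1}^{n}\frac{\theta\bigl(\e^{2(z_m+\psi_{\bm k;i_{\bm k}(m;r)})}\bigr)}{(\e^{2(\pm z_m-\ell_r\eta)};q^2)_\infty}\prod_{1\leq m<m'\leq n}(1-\e^{2(z_{m'}-z_m)})\frac{(q^2\e^{2(z_{m'}-z_m-\eta)};q^2)_\infty}{(\e^{2(z_{m'}-z_m+\eta)};q^2)_\infty}\,\d^n\bm z, \]
with $z_m$ running over $\gamma_{i_{\bm k}(m;r)}+\sqrt{-1}[0,\pi]$, and to check that $(-\pi\sqrt{-1})^{-n}\widehat I_r$ equals the $r$-th factor of the last product in \eqref{nuproduct}.

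\textbf{Passage to contour integrals.} I would substitute $w_m=\e^{2z_m}$. Because each $z_m$ traverses a segment of imaginary length $\pi$, each $w_m$ traverses the circle $|w_m|=\e^{2\Re\gamma_{i_{\bm k}(m;r)}}$ exactly once, with $\d z_m=\tfrac{\d w_m}{2w_m}$; this is the source of the factors $\pi\sqrt{-1}$ in \eqref{nuproduct}. The defining inequalities of $\Gamma_{M,N}^{\bm k}$ ensure that these circles are concentric with radii strictly decreasing and separated by $\Re(\eta)$, and that $-\Re(\ell_r\eta)<\Re\gamma<\Re(\ell_r\eta)$, so that the poles of $(\e^{-2\ell_r\eta}w_m^{-1};q^2)_\infty^{-1}$ lie strictly inside the $w_m$-circle while those of $(\e^{-2\ell_r\eta}w_m;q^2)_\infty^{-1}$ lie strictly outside; this is exactly the pole separation recorded in Lemma \ref{lem:poles}, restricted to the relevant factors and specialized to $\bm t=\bm 0$.

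\textbf{Residue evaluation.} The heart of the matter is to evaluate $\widehat I_r$ by summing residues. In the base case $n=1$, closing the contour collects the poles at $w=\e^{-2\ell_r\eta}q^{2p}$ ($p\geq 0$); using the quasi-periodicity $\theta(q^2z)=-z^{-1}\theta(z)$ to reinstate the theta numerator, the resulting sum is summable in closed form by a classical $q$-series identity (Jacobi's triple product, equivalently Ramanujan's ${}_1\psi_1$ summation), which reproduces a theta factor in the numerator and yields precisely the $m=1$ factor of \eqref{nuproduct}, with $\psi_{\bm k;\cdot}$ converted into $\omega_{\bm k;r}$ as in \eqref{omega}. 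For general $n$ I would argue by induction, integrating out the variables one at a time (say, from the outermost circle inward): at each pole of the variable being removed the interaction factors $(1-w_m/w_{m'})\cdots$ telescope against the single-variable weights, so that the residue sum contributes one more factor of the target product and leaves an $(n-1)$-variable integral of exactly the same shape, with $n$ decreased by one and the phase $\psi_{\bm k;\cdot}$ (hence $\omega_{\bm k;r}$) shifted accordingly. Iterating gives the full product, and collecting the signs $(-1)^M$ and the factors $(\pi\sqrt{-1})^M$ matches $\mu_{\bm k}$ with $\nu_{\bm k}$.

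\textbf{Main obstacle.} The genuinely delicate step is the multivariable residue computation: one must verify that at each pole of the variable being integrated out, the coupling factors combine with the remaining weights to produce a single factor of the target product and to restore the precise integrand shape with shifted parameters, while keeping careful track of the containment conditions supplied by $\mathcal{D}_{M,N}^{\bm k}$ and $\Gamma_{M,N}^{\bm k}$ so that no spurious residues are collected. An attractive alternative I would pursue in parallel is to recognize $\widehat I_r$ as a known $q$-Selberg- or Gustafson-type integral whose product evaluation is on record, in which case this residue step reduces to matching parameters and collecting the $(-\pi\sqrt{-1})$ factors.
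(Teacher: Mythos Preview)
Your factorization over blocks is exactly what the paper does, and your bookkeeping of the elementary prefactor $(-1)^M\e^{M\xi_-}\prod_{r<s}\e^{2\ell_r n_{\bm k}(s)\eta}$ is correct. Where you diverge from the paper is in the evaluation of the block integral $\widehat I_r$. The paper does not carry out the inductive residue computation you outline; instead it follows your ``attractive alternative'': after observing that both $\mu_r^{\bm k}$ and $\nu_r^{\bm k}$ are holomorphic on the connected parameter domain $\mathcal{D}$, it analytically continues to the subdomain $\Re(\eta)<0$ (where one may take $\bm\gamma_{(r)}=\bm 0$) and then recognizes $\widehat I_r$ as a special case of a multivariable $q$-hypergeometric integral already evaluated in \cite[Appendix D, proof of formula (5.13)]{TV2}, with parameters $a=b=\e^{-2\ell_r\eta}$, $c=q\,\e^{-2\omega_{\bm k;r}}$, $p=q^2$, $x=\e^{2\eta}$.

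Your residue approach would presumably work, but you have correctly identified its weak point: the inductive step requires a nontrivial telescoping identity among the interaction factors at each pole, and you have not exhibited it. The paper's route avoids this entirely by outsourcing to \cite{TV2}. One small thing you omit that the paper uses: the analytic continuation in $(\ell_r,\eta)$ is what lets one dispense with the nested contour conditions $\Re(\gamma_{i(s+1;r)})+\Re(\eta)<\Re(\gamma_{i(s;r)})$, since for $\Re(\eta)<0$ these are automatic at $\bm\gamma_{(r)}=\bm 0$ and the integral is independent of $\bm\gamma_{(r)}$ by Cauchy's theorem.
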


\begin{proof}
Fix $\bm k \in I_{M,N}$. Recall the notation $i_{\bm k}(m;r)$ from \eqref{ibmk}.
It follows that 
\[
\psi_{\bm k;i(m;r)} =  \omega_{\bm k;r} + \tfrac{\tau}{2} + (2m-n_{\bm k}(r)-1)\eta,
\]
with $\omega_{\bm k;r}$ given by \eqref{omega}.

Let $r \in \{1,\ldots,N\}$.
We introduce the meromorphic function $f_r^{\bm k}$ on $\C^{n_{\bm k}(r)}$ by
\begin{align*} f_r^{\bm k}(\bm z) &:= \Biggl( \prod_{m=1}^{n(r)} \frac{\theta(q\e^{2(z_m + \omega_{\bm k;r} + (2m-n(r)-1) \eta)})}{(\e^{2(\pm z_m-\ell_r \eta)};q^2)_\infty} \Biggr) \\
& \qquad \times  \Biggl( \prod_{1\leq m<m'\leq n_{\bm k}(r)}
\bigl(1-\e^{2(z_{m'}-z_m)}\bigr) \frac{(q^2 \e^{2(z_{m'}-z_m-\eta)};q^2)_\infty}
{(\e^{2(z_{m'}-z_m+\eta)};q^2)_\infty} \Biggr) 
\end{align*}
and the cycles
\[ \mathcal{C}_{(r)} := \bigl(\gamma_{i(1;r)}+\sqrt{-1}[0,\pi]\bigr)\times
\cdots\times\bigl(\gamma_{i(n_{\bm k}(r);r)}+\sqrt{-1}[0,\pi]\bigr).
\]
Define
\begin{align} 
\label{mur} \mu_r^{\bm k}&:= \int_{\mathcal{C}_{(r)}} f_r^{\bm k}(\bm z) 
\d^{n_{\bm k}(r)} \bm z \\
\label{nur} \nu_r^{\bm k} &:=  
\prod_{m=1}^{n_{\bm k}(r)}(-\pi \sqrt{-1}) \frac{(q^2\e^{-2m\eta}, q\e^{2((m-1-\ell_r)\eta \pm \omega_{\bm k;r})};q^2)_\infty}
{(q^2,q^2 \e^{-2\eta},\e^{2(m-1-2\ell_r)\eta};q^2)_\infty}.
\end{align}

Because 
\[ C_{\bm k}^{\bm\gamma}(\bm 0) = \mathcal{C}_{(1)} \times \cdots \times 
\mathcal{C}_{(N)} \]
and the product over $i<j$ in \eqref{muintegral} entails the restriction $k_i=k_j$, we note that the integral $\mu_{\bm k}$ factorizes into a product of 
$n_{\bm k}(r)$-fold integrals for $r \in \{1,\ldots,N\}$:
\begin{equation} \label{mufactorization}
\mu_{\bm k} = \biggl( \prod_{i=1}^M -\e^{\xi_-+2\sum_{s<k_i} \ell_s \eta} \biggr) 
\prod_{r=1}^N \mu_r^{\bm k}.
\end{equation}
In view of \eqref{nuproduct} it therefore suffices to show that 
$\mu_r^{\bm k}  = \nu_r^{\bm k}$. 

We prove $\mu_r^{\bm k}  = \nu_r^{\bm k}$
for the following parameter values. We fix $\bm k\in I_{M,N}$,
$r\in\{1,\ldots,N\}$ with $n_{\bm k}(r)\geq 1$ 
and $\tau\in\mathbb{C}$ with $\Re(\tau)<0$. Consider
the path-connected non-empty parameter domain
\[
\mathcal{D}=\Bigl\{ (\ell_r,\eta)\in\mathbb{C}^2\,\, | \,\, 
\Re(\ell_r\eta)> \max\Bigl(0,\tfrac{n_{\bm k}(r)-1}{2} \Re(\eta) \Bigr)\Bigr\}.
\]
For fixed $(\ell_r,\eta)\in\mathcal{D}$ let
\[
\bm\gamma_{(r)}=\bigl(\gamma_{i(1;r)},\ldots,\gamma_{i(n_{\bm k}(r);r)}\bigr)\in
\mathbb{C}^{n_{\bm k}(r)}
\] 
be such that 
\begin{equation*}
\begin{split}
&-\Re(\ell_r\eta)<\Re(\gamma_{i(s;r)})<\Re(\ell_r\eta),\\
&\Re(\gamma_{i(s^\prime+1;r)}) \leq \Re(\gamma_{i(s^\prime;r)}),\\
&\Re(\gamma_{i(s^\prime+1;r)})+\Re(\eta)<\Re(\gamma_{i(s^\prime;r)})
\end{split}
\end{equation*}
for $1\leq s\leq n_{\bm k}(r)$ and $1\leq s^\prime<n_{\bm k}(r)$.
Note that if $\Re(\eta)<0$ then $\bm\gamma_{(r)}=(0,\ldots,0)$ satisfies the conditions.   

The integral $\mu_r^{\bm k}$ (see \eqref{mur})
does not depend on the choice of $\bm\gamma_{(r)}$. This follows
from the fact that the separation of the poles of $f_r^{\bm k}(\bm z)$
by the cycle $\mathcal{C}_{(r)}$ does not depend on the choice of $\bm\gamma_{(r)}$,
and Cauchy's theorem. It now also follows directly that
$\mu_r^{\bm k}$ is holomorphic in $(\ell_r,\eta)\in\mathcal{D}$.
The same is true for $\nu_r^{\bm k}$ (see \eqref{nur}) by a direct inspection. 
Hence it suffices to prove that $\mu_r^{\bm k}=\nu_r^{\bm k}$ for the 
restricted parameter domain 
\[\{(\ell_r,\eta)\in\mathcal{D} \,\, | \,\,
\Re(\eta)<0\}=\{(\ell_r,\eta)\in\mathbb{C}^2\,\, | \,\,
\Re(\ell_r\eta)>0\,\,\&\,\, \Re(\eta)<0\}.
\]
But this is the special case of 
\cite[Appendix D - Proof of formula (5.13)]{TV2}
with the associated parameters specialized to 
\[ 
a=b=\e^{-2\ell_r \eta}, \qquad c=q \, \e^{-2\omega_{\bm k;r}}, 
\qquad p = q^2 , \qquad x=\e^{2\eta}. \hfill 
\qedhere
\]
\end{proof}

\subsection{Completeness}

Consider the boundary qKZ equations \eqref{bqKZ} as a compatible system of 
difference equations for $V^{\bm\ell}(M)$-valued meromorphic functions on
$\C^N$. The fact that the leading
coefficients $\nu_{\bm k}\Omega_{\bm k}$ of $\Theta_{\bm k}(\bm t)$
as $\bm t \stackrel{\mathbb{A}}{\to} \infty$ ($\bm k\in I_{M,N}$) 
form a linear basis of $V^{\bm\ell}(M)$ 
implies that the $\{\Psi_{\bm k}\}_{\bm k\in I_{M,N}}$
form a linear basis of 
the space of $V^{\bm\ell}(M)$-valued meromorphic solutions of the boundary qKZ
equations over the field
of $\tau\mathbb{Z}^N$-periodic meromorphic functions on $\C^N$
by the arguments of \cite[\S 5.6]{vMS}. This proves part {\bf d} of 
Theorem \ref{thm:main}.

\section{Integral solutions for finite-dimensional representations 
of quantum $\mathfrak{sl}_2$} \label{sec:findim}

Fix $\ell_r\in\frac{1}{2}\mathbb{Z}_{>0}$ for $1\leq r\leq N$ and write
\[
\pr^{\bm\ell}:=\pr^{\ell_1}\otimes\cdots\otimes\pr^{\ell_N}:
V^{\bm\ell}\rightarrow 
\overline{V}^{\bm\ell}:=\overline{V}^{\ell_1}\otimes\cdots\otimes
\overline{V}^{\ell_N}
\]
for the projection onto the finite-dimensional quotient.
Write 
\[ \widebar{\Omega}_{\bm k} := \pr^{\bm\ell}\Omega_{\bm k} = 
\overline{v}^{\ell_1}_{n_{\bm k}(1)} \otimes \cdots \otimes 
\overline{v}^{\ell_N}_{n_{\bm k}(N)}  \]
for $\bm k \in I_{M,N}$. Note that
\[
\overline{V}^{\bm\ell}=\bigoplus_{M=0}^{2\sum_{r=1}^N\ell_r}\overline{V}^{\bm\ell}(M)
\]
with
\[
\overline{V}^{\bm\ell}(M)=\bigoplus_{\bm k\in I_{M,N}^{\bm\ell}}
\mathbb{C}\overline{\Omega}_{\bm k}
\]
and with index set
\[
I_{M,N}^{\bm\ell}:=\{\bm k\in I_{M,N} \,\, | \,\,
n_{\bm k}(r)\leq 2\ell_r\quad \forall\, r=1,\ldots,N\}.
\]

Let $\overline{\mathcal{A}}_r(\bm t): \overline{V}^{\bm\ell}\rightarrow
\overline{V}^{\bm\ell}$ be the linear operators such that
\[
\overline{\mathcal{A}}_r(\bm t)\circ\pr^{\bm\ell}=
\pr^{\bm\ell}\circ\mathcal{A}_r(\bm t),
\]
with $\mathcal{A}_r(\bm t): V^{\bm\ell}\rightarrow V^{\bm\ell}$ the transport operators of the boundary qKZ
equations \eqref{bqKZ}. The boundary qKZ equations for 
$\overline{V}^{\bm\ell}$-valued meromorphic functions $\overline{\Psi}$
on $\C^N$ are
\begin{equation}\label{bqKZfin}
\overline{\Psi}(\bm t+\tau\bm e_r)=\overline{\mathcal{A}}_r(\bm t)
\overline{\Psi}(\bm t),\qquad r\in\{1,\ldots,N\}.
\end{equation}

Let $\eta\in\mathbb{C}$ with $\Re(\eta)>0$. Then for all 
$\bm k\in I_{M,N}^{\bm\ell}$ we have $(\bm\ell,\eta)\in\mathcal{D}_{M,N}^{\bm k}$.
Hence Theorem \ref{thm:main} immediately gives the following result.
\begin{thm}\label{thm:findim}
Let $\tau\in\mathbb{C}$ with $\Re(\tau)<0$.
Let $\ell_r\in\frac{1}{2}\mathbb{Z}_{>0}$ ($1\leq r\leq N$) and 
$\eta\in\C$ with $\Re(\eta)>0$. Let $0\leq M\leq 2\sum_{r=1}^{N}\ell_r$.
For $\bm k\in I_{M,N}^{\bm\ell}$ let $\Psi_{\bm k}$ be the $V^{\bm\ell}(M)$-valued 
meromorphic solution of the boundary qKZ equations \eqref{bqKZ} with respect
to the parameters $(\bm\ell,\eta)$, as defined in
Theorem \ref{thm:main}. 

Then $\{\pr^{\bm\ell}\Psi_{\bm k}\}_{\bm k\in I_{M,N}^{\bm\ell}}$ is a basis
of the $\overline{V}^{\bm\ell}(M)$-valued meromorphic solutions of the
boundary qKZ equations \eqref{bqKZfin}
over the field of $\tau\mathbb{Z}^N$-invariant 
meromorphic functions.
\end{thm}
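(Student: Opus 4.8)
The plan is to obtain Theorem \ref{thm:findim} directly from Theorem \ref{thm:main} by pushing the integral solutions through the projection $\pr^{\bm\ell}$. The first step is to check that the relevant integral solutions are available: for $\bm k\in I_{M,N}^{\bm\ell}$ one has $n_{\bm k}(r)\leq 2\ell_r$, so with $\ell_r\in\tfrac12\mathbb{Z}_{>0}$ and $\Re(\eta)>0$ we get $\Re(\ell_r\eta)=\ell_r\Re(\eta)>0$ and $\Re(\ell_r\eta)>\bigl(\ell_r-\tfrac12\bigr)\Re(\eta)\geq \tfrac{n_{\bm k}(r)-1}{2}\Re(\eta)$; thus $(\bm\ell,\eta)\in\mathcal{D}_{M,N}^{\bm k}$ and parts {\bf a}--{\bf c} of Theorem \ref{thm:main} produce $\Psi_{\bm k}$ together with its asymptotic expansion. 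I emphasize that one cannot simply invoke part {\bf d}, since $(\bm\ell,\eta)$ need not lie in $\mathcal{D}_{M,N}=\bigcap_{\bm k\in I_{M,N}}\mathcal{D}_{M,N}^{\bm k}$: only the indices $\bm k\in I_{M,N}^{\bm\ell}$ are guaranteed to be admissible, which is precisely why these are the ones that survive the projection.

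Next I would verify the projected functions solve \eqref{bqKZfin}. Applying $\pr^{\bm\ell}$ to the boundary qKZ equation $\Psi_{\bm k}(\bm t+\tau\bm e_r)=\mathcal{A}_r(\bm t)\Psi_{\bm k}(\bm t)$ and using the defining intertwining relation $\overline{\mathcal{A}}_r(\bm t)\circ\pr^{\bm\ell}=\pr^{\bm\ell}\circ\mathcal{A}_r(\bm t)$ yields $\pr^{\bm\ell}\Psi_{\bm k}(\bm t+\tau\bm e_r)=\overline{\mathcal{A}}_r(\bm t)\,\pr^{\bm\ell}\Psi_{\bm k}(\bm t)$, so the $\pr^{\bm\ell}\Psi_{\bm k}$ are $\overline{V}^{\bm\ell}(M)$-valued meromorphic solutions of \eqref{bqKZfin}. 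This step uses only that $\mathcal{A}_r$ descends to the quotient, which is exactly what makes $\overline{\mathcal{A}}_r$ well defined.

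The core of the argument is the asymptotics. Since $\pr^{\bm\ell}$ is linear and continuous it commutes with the normally convergent expansion of part {\bf c}, so on $\widetilde{\mathbb{A}}$
\[ \pr^{\bm\ell}\Theta_{\bm k}(\bm t)=\sum_{\alpha\in Q_+}\pr^{\bm\ell}L_\alpha^{\bm k}\,\e^{-(\alpha,\bm t)},\qquad \pr^{\bm\ell}L_{\bm 0}^{\bm k}=\nu_{\bm k}\,\widebar{\Omega}_{\bm k}, \]
and $\pr^{\bm\ell}\Psi_{\bm k}=\Phi_{\bm k}\,\pr^{\bm\ell}\Theta_{\bm k}$ has leading coefficient $\nu_{\bm k}\widebar{\Omega}_{\bm k}$ as $\bm t\stackrel{\mathbb{A}}{\to}\infty$. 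The step I expect to be the main obstacle is showing that these leading coefficients still form a basis of $\overline{V}^{\bm\ell}(M)$. For $\bm k\in I_{M,N}^{\bm\ell}$ the vectors $\widebar{\Omega}_{\bm k}$ are by construction the standard basis of $\overline{V}^{\bm\ell}(M)$, so everything hinges on $\nu_{\bm k}\neq 0$. Here I would inspect \eqref{nuproduct}: for $1\leq m\leq n_{\bm k}(r)\leq 2\ell_r$ one has $m\geq 1$ and $m-1-2\ell_r<0$, so with $0<|q|<1$ and $\Re(\eta)>0$ the arguments of the $q^2$-shifted factorials $(q^2\e^{-2m\eta};q^2)_\infty$ and $(\e^{2(m-1-2\ell_r)\eta};q^2)_\infty$ have modulus strictly below $1$ and hence cannot be of the form $q^{-2j}$; this makes both the numerator and the denominator factors nonzero and finite. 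The remaining numerator factor involving $\omega_{\bm k;r}$ is nonzero for generic boundary parameters $\xi_\pm$, giving $\nu_{\bm k}\neq 0$.

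Finally I would conclude exactly as in part {\bf d}: the space of $\overline{V}^{\bm\ell}(M)$-valued meromorphic solutions of the compatible system \eqref{bqKZfin} over the field of $\tau\mathbb{Z}^N$-periodic meromorphic functions has dimension $\dim\overline{V}^{\bm\ell}(M)=\#I_{M,N}^{\bm\ell}$, and we have produced exactly $\#I_{M,N}^{\bm\ell}$ solutions $\pr^{\bm\ell}\Psi_{\bm k}$ whose leading asymptotic coefficients $\nu_{\bm k}\widebar{\Omega}_{\bm k}$ form a basis of the target. By the argument of \cite[\S5.6]{vMS} the solutions are therefore linearly independent over the field of periodic functions, hence a basis.
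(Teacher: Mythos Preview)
Your proof is correct and follows the same route the paper indicates (the paper merely says the theorem ``immediately'' follows from Theorem~\ref{thm:main} after noting that $(\bm\ell,\eta)\in\mathcal{D}_{M,N}^{\bm k}$ for $\bm k\in I_{M,N}^{\bm\ell}$). You supply the details the paper omits: in particular, your observation that part~{\bf d} of Theorem~\ref{thm:main} is not directly available because $(\bm\ell,\eta)$ need not lie in the full intersection $\mathcal{D}_{M,N}$, so the completeness argument must be rerun on the quotient using parts~{\bf a}--{\bf c} and the basis $\{\widebar{\Omega}_{\bm k}\}_{\bm k\in I_{M,N}^{\bm\ell}}$, is exactly right and makes explicit a step the paper leaves to the reader.
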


\begin{rema}
If $\ell_1 = \ldots = \ell_N = \tfrac{1}{2}$ then 
$\widebar{V}^{\bm \ell} \cong (\C^2)^{\otimes N}$ 
and $\widebar{V}^{\bm \ell}(M)$ is spanned by vectors 
$\widebar{\Omega}_{\bm k}$ with $\bm k \in I_{M,N}$ such that $n_{\bm k}(r) \leq 1$ for all $r \in \{1,\ldots,N\}$. 
For all such $\bm k$ we may take $\bm \gamma = (0,0,\ldots,0) \in 
\Gamma_{M,N}^{\bm k}$, since $k_i=k_j$ for $i \ne j$ does not occur. 
\end{rema}

\appendix

\section{Asymptotics of the boundary qKZ equations} \label{app:bqKZasymptotics}

\begin{lem} \label{lem:RKinfinity}
Let $\ell_1,\ell_2 \in \C$.
The limits
\begin{equation} \label{RKinfty}
R_\infty^{\ell_1 \ell_2}:= \lim_{\Re(x) \to \infty} R^{\ell_1 \ell_2}(x), \qquad K_\infty^{\ell_1}(\xi) := \lim_{\Re(x) \to \infty} K^{\ell_1}(x;\xi).
\end{equation}
exist and for all $d_1,d_2 \in \Z_{\geq 0}$ we have
\begin{equation}
\label{RKinftyformula}
\begin{aligned}
R_\infty^{\ell_1 \ell_2} (v^{\ell_1}_{d_1} \otimes v^{\ell_2}_{d_2}) &= \e^{2(d_1 d_2 - \ell_1 d_2 - d_1 \ell_2) \eta} v^{\ell_1}_{d_1} \otimes v^{\ell_2}_{d_2} \\
K_\infty^{\ell_1}(\xi) (v^{\ell_1}_{d_1}) &=  (-1)^{d_1} \e^{-d_1 \left( 2\xi+(2\ell_1-d_1)\eta \right)} v^{\ell_1}_{d_1}.
\end{aligned}
\end{equation}
\end{lem}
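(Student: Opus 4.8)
The two limits have quite different flavours, so the plan is to treat them separately, disposing of the $K$-matrix by a direct calculation and reducing the $R$-matrix to the Cartan part of the universal $R$-matrix.

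For $K^{\ell_1}(x;\xi)$ the statement is a one-line asymptotic computation from the explicit diagonal formula \eqref{Kdiagonal}. Since $\sinh(w)\sim\tfrac12\e^{w}$ as $\Re(w)\to+\infty$ and $\sinh(w)\sim-\tfrac12\e^{-w}$ as $\Re(w)\to-\infty$, each factor of the finite product tends, as $\Re(x)\to\infty$, to
\[
\frac{\sinh(\xi-x+(\ell+\tfrac12-j)\eta)}{\sinh(\xi+x+(\ell+\tfrac12-j)\eta)}\longrightarrow -\e^{-2\xi-(2\ell+1-2j)\eta}.
\]
I would take this limit factor by factor (the product over $j$ is finite, so the limit passes through) and then collapse the resulting product over $j=1,\ldots,d$: the signs give $(-1)^d$, the $\xi$-terms give $\e^{-2d\xi}$, and the exponents of $\eta$ sum to $\sum_{j=1}^{d}(2\ell+1-2j)=d(2\ell-d)$ by the standard arithmetic-series identity. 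This reproduces $(-1)^d\e^{-d(2\xi+(2\ell-d)\eta)}$ and makes the existence of the limit automatic. There is no real obstacle here.

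For $R^{\ell_1\ell_2}(x)$ I would argue structurally rather than from an explicit matrix. Recall that $R^{\ell_1\ell_2}(x)$ is the scalar multiple of $(\pi^{\ell_1}_x\otimes\pi^{\ell_2}_0)(\mathcal R)$ fixed by $v_0^{\ell_1}\otimes v_0^{\ell_2}\mapsto v_0^{\ell_1}\otimes v_0^{\ell_2}$. The key input is the triangular (Gauss) decomposition $\mathcal R=q^{\frac{h_1\otimes h_1}{2}}\,\mathcal R'$ of the truncated universal $R$-matrix, where $\mathcal R'-1\otimes1$ is assembled from affine root vectors and carries strictly positive loop-degree. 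Under $\pi^{\ell_1}_x\otimes\pi^{\ell_2}_0$ each unit of loop-degree comes accompanied by a factor $\e^{-x}$ (this is how $\phi_x(e_0)=\e^{-x}f_1$, $\phi_x(f_0)=\e^{x}e_1$ distribute the spectral parameter along the grading), so, invoking the asymptotic expansion of \cite[\S 9.6]{EFK}, $(\pi^{\ell_1}_x\otimes\pi^{\ell_2}_0)(\mathcal R')=\Id+O(\e^{-\Re(x)})$ as $\Re(x)\to\infty$. Because $\mathcal R$, and hence both $q^{\frac{h_1\otimes h_1}{2}}$ and $\mathcal R'$, commute with $\Delta(\e^{h_1})$, all of these operators preserve the total weight spaces; in particular they preserve the one-dimensional top weight space spanned by $v_0^{\ell_1}\otimes v_0^{\ell_2}$, so the normalizing scalar is genuinely the eigenvalue $\rho(x)$ on this space.

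It then remains to read off the diagonal limit. Using $\pi^\ell(\e^{zh_1})v^\ell_d=\e^{2(\ell-d)z}v^\ell_d$, the Cartan part acts on $v^{\ell_1}_{d_1}\otimes v^{\ell_2}_{d_2}$ by $\e^{\frac{\eta}{2}\,h_1\otimes h_1}=\e^{2\eta(\ell_1-d_1)(\ell_2-d_2)}$, while the normalizing eigenvalue satisfies $\rho(x)=\e^{2\eta\ell_1\ell_2}(1+O(\e^{-x}))\to\e^{2\eta\ell_1\ell_2}$ (the $d_1=d_2=0$ value). Dividing the two limits, which both exist with nonzero denominator, gives
\[
R_\infty^{\ell_1\ell_2}(v^{\ell_1}_{d_1}\otimes v^{\ell_2}_{d_2})=\e^{2\eta\left[(\ell_1-d_1)(\ell_2-d_2)-\ell_1\ell_2\right]}v^{\ell_1}_{d_1}\otimes v^{\ell_2}_{d_2}=\e^{2(d_1d_2-\ell_1d_2-d_1\ell_2)\eta}v^{\ell_1}_{d_1}\otimes v^{\ell_2}_{d_2},
\]
as claimed. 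The one genuine obstacle is the previous paragraph: making rigorous that the non-Cartan part decays like $\e^{-\Re(x)}$ uniformly on each (finite-dimensional) weight space rests on the loop-grading bookkeeping and on convergence of the universal $R$-matrix in the completed tensor product. This is precisely what is supplied by \cite[\S 9.6]{EFK}, and I would rely on it rather than reprove it, so that the remaining work is the elementary eigenvalue-and-normalization computation above.
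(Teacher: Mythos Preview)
Your argument is correct and follows essentially the same route as the paper: the $K$-matrix limit is the identical direct computation from \eqref{Kdiagonal}, and for the $R$-matrix both you and the paper isolate the Cartan factor $\e^{\frac{\eta}{2}h_1\otimes h_1}$ of the truncated universal $R$-matrix, invoke \cite[\S 9.6]{EFK} for the decay of the remainder, and divide by the highest-weight normalization $\e^{2\eta\ell_1\ell_2}$. The only substantive difference is that the paper first extracts the limit as $\Re(x)\to-\infty$ and then applies unitarity \eqref{unitarity} to obtain the $\Re(x)\to+\infty$ asymptotics, whereas you argue for $\Re(x)\to+\infty$ directly. One terminological slip worth noting: the non-Cartan part $\mathcal R'-1$ does \emph{not} carry strictly positive loop-degree (the $\alpha_1$-root contribution has loop-degree zero); what actually makes every term decay is that in this paper's evaluation homomorphism both $\phi_x(e_0)$ and $\phi_x(e_1)$ carry the factor $\e^{-x}$, so the relevant grading is the principal (height) grading rather than the homogeneous one. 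Your parenthetical remark shows you have the correct mechanism in mind, so this does not affect the validity of the argument.
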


\begin{proof}
Let $\mathcal{R}$ be the truncated universal R-matrix for $\widehat{\mathcal{U}}_\eta$, see Section \ref{sec:uniR}. 
After twisting it by $z^\frac{d}{2}\otimes f_1$ we will have an element $\mathcal{R}(z) \in \widehat{\mathcal{U}}_\eta^{\otimes 2}[[z]]$ of the form $\mathcal{R}(z)=\exp(\frac{\eta}{2} h_1 \otimes h_1)(1+O(z))$.
The evaluation of $\mathcal{R}(z)$ in the tensor product of two irreducible representations $V^{\ell_1}\otimes V^{\ell_2}$
gives the R-matrix $\e^{2\ell_1\ell_2\eta} R^{\ell_1 \ell_2}(x)$ where $z=\e^x$; the extra factor is due to the normalized action on the tensor product of highest weight vectors.
The limit $\Re(x)\to -\infty$ corresponds to $z\to 0$.
Thus, for the R-matrices in question we have:
\[
R^{\ell_1 \ell_2}(x)\to \e^{2 \ell_1 \ell_2 \eta} (\pi^{\ell_1 } \otimes \pi^{\ell_2}) \exp \left(-\tfrac{\eta}{2} h_1\otimes h_1 \right)
\]
as $\Re(x)\to -\infty$, and owing to unitarity \eqref{unitarity} we obtain
\[
R^{\ell_1 \ell_2}(x)\to \e^{-2 \ell_1 \ell_2 \eta}   (\pi^{\ell_1 } \otimes \pi^{\ell_2}) \exp \left(\tfrac{\eta}{2} h_1\otimes h_1\right)
\]
as $\Re(x)\to \infty$.
When applied to $v^{\ell_1}_{d_1} \otimes v^{\ell_2}_{d_2}$, this gives the desired formula.

The asymptotic formula for $K^\ell(x;\xi)$ follows immediately from \eqref{Kdiagonal}.
\end{proof}

As a consequence of Lemma \ref{lem:RKinfinity} we have
\begin{lem} \label{lem:Ainfinity}
Let $\bm \ell \in \C^N$ and $r \in \{1,\ldots,N\}$.
Then the limit $\mathcal{A}_{\infty,r}$ as defined in \eqref{Ainfty} exists and for all $\bm k \in I_{M,N}$ and all $r \in \{1,\ldots,N\}$ we have
\begin{equation} \label{Ainftyformula}  \mathcal{A}_{\infty,r} (\Omega_{\bm k}) = \varphi_{\bm k;r}  \Omega_{\bm k},
\end{equation}
with $ \varphi_{\bm k;r}$ given by \eqref{varphidefn}.
\end{lem}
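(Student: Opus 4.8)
The plan is to compute the limit $\mathcal{A}_{\infty,r}=\lim_{\bm t\stackrel{\mathbb{A}}{\to}\infty}\mathcal{A}_r(\bm t)$ factor by factor, using Lemma~\ref{lem:RKinfinity}. First I would observe that, as $\bm t\stackrel{\mathbb{A}}{\to}\infty$, the real parts of the arguments of every R- and K-matrix occurring in \eqref{bqKZtransportmatrix} tend to $+\infty$: the forward R-matrices and the two K-matrices have arguments of the form $t_r-t_s+\tau$ (for $s>r$), $t_s+t_r$, $t_r+\tfrac{\tau}{2}$ and $t_r$, while the inverse R-matrices have arguments $t_s-t_r$ (for $s<r$); all of these have real part tending to $+\infty$. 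Hence each factor converges, by Lemma~\ref{lem:RKinfinity}, to $R_\infty^{\ell_s\ell_{s'}}$, to $(R_\infty^{\ell_s\ell_r})^{-1}$ (which exists since $R_\infty^{\ell_s\ell_r}$ is diagonal with nonzero entries), or to $K_\infty^{\ell_r}(\xi_+)$ and $K_\infty^{\ell_r}(\xi_-)$. Since all operators act on the fixed finite-dimensional space $V^{\bm\ell}(M)$ and multiplication is continuous, the limit $\mathcal{A}_{\infty,r}$ exists and equals the product of these limiting factors, which establishes the existence claim.

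Next I would use that every limiting factor is diagonal in the tensor product weight basis, so that $\Omega_{\bm k}=v^{\ell_1}_{n_{\bm k}(1)}\otimes\cdots\otimes v^{\ell_N}_{n_{\bm k}(N)}$ is a simultaneous eigenvector and the eigenvalue of $\mathcal{A}_{\infty,r}$ on $\Omega_{\bm k}$ is simply the product of the individual eigenvalues read off from \eqref{RKinftyformula}. Writing $n_s:=n_{\bm k}(s)$, the key observation is that for $s<r$ the forward R-matrix $R^{\ell_s\ell_r}_{s\,r}$ and the inverse R-matrix $\bigl(R^{\ell_s\ell_r}_{s\,r}\bigr)^{-1}$ contribute reciprocal eigenvalues $\e^{\pm2(n_s n_r-\ell_s n_r-n_s\ell_r)\eta}$ and therefore cancel in pairs. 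The surviving contributions are one factor $\e^{2(n_r n_s-\ell_r n_s-n_r\ell_s)\eta}$ for each $s>r$ from the first line of \eqref{bqKZtransportmatrix}, a factor $\e^{2(n_s n_r-\ell_s n_r-n_s\ell_r)\eta}$ for each $s>r$ from the second line, together with the two diagonal K-matrix eigenvalues $(-1)^{n_r}\e^{-n_r(2\xi_\pm+(2\ell_r-n_r)\eta)}$. Multiplying these out gives
\[
\mathcal{A}_{\infty,r}\Omega_{\bm k}=\Biggl(\prod_{s>r}\e^{4(n_r n_s-\ell_r n_s-n_r\ell_s)\eta}\Biggr)\e^{-2n_r(\xi_++\xi_-)-2n_r(2\ell_r-n_r)\eta}\,\Omega_{\bm k}.
\]

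It then remains to check that this eigenvalue equals $\varphi_{\bm k;r}$ as defined in \eqref{varphidefn}, and here the main obstacle is purely combinatorial bookkeeping. The product in \eqref{varphidefn} runs over the positions $i$ with $k_i=r$, which by \eqref{ibmk} are exactly $i=\sum_{s<r}n_s+m$ for $m=1,\ldots,n_r$. I would substitute $M=\sum_s n_s$, evaluate $\sum_{i:\,k_i=r}i=n_r\sum_{s<r}n_s+\tfrac12 n_r(n_r+1)$ and $\#\{i\mid k_i>r\}=\sum_{s>r}n_s$, and collect the powers of $\e^\eta$ and of $\e^{\xi_\pm}$; the terms $n_r M-\sum_{i:\,k_i=r}i=n_r\sum_{s\geq r}n_s-\tfrac12 n_r(n_r+1)$ eliminate all dependence on $n_s$ and $\ell_s$ for $s<r$. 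A direct comparison of the resulting exponents with those in the display above shows the two expressions agree, completing the proof. The only subtlety worth flagging is the pairwise cancellation of the $s<r$ R-matrix factors, which is precisely what makes $\varphi_{\bm k;r}$ depend on $\ell_s,n_s$ only for $s\geq r$.
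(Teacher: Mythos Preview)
Your proposal is correct and follows essentially the same approach as the paper: take the limit factor by factor via Lemma~\ref{lem:RKinfinity}, observe that the $s<r$ R-factors cancel with their inverses (the paper phrases this via $P$-symmetry and diagonality, arriving at $\bigl(K^{\ell_r}_\infty(\xi_+) K^{\ell_r}_\infty(\xi_-)\bigr)_r\prod_{s>r}(R_\infty^{\ell_r\ell_s})^2_{r\,s}$), then compute the diagonal eigenvalue and match it with $\varphi_{\bm k;r}$ through the same combinatorial identity $\sum_{i:k_i=r}(M-i)=n_r\sum_{s>r}n_s+\tfrac{n_r(n_r-1)}{2}$.
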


\begin{proof} 
According to Lemma \ref{lem:RKinfinity}, the desired limit exists and is equal to
\begin{align*} 
\mathcal{A}_{\infty,r} &= (R^{\ell_r \, \ell_{r+1}}_{\infty})_{r \, r+1} \cdots (R^{\ell_r \, \ell_N}_\infty)_{r \, N} \Bigl(K^{\ell_r}_\infty(\xi_+)\Bigr)_r (R^{\ell_N \, \ell_r}_\infty)_{N \, r} \cdots (R^{\ell_{r+1} \, \ell_r}_\infty)_{r+1 \, r}\\
& \qquad \times (R^{\ell_{r-1} \, \ell_r}_\infty)_{r-1 \, r} \cdots (R^{\ell_1 \, \ell_r}_\infty)_{1 \, r} \Bigl( K^{\ell_r}_\infty(\xi_-)\Bigr)_r  \bigl( R^{\ell_1 \, \ell_r}_\infty \bigr)^{-1}_{1 \, r} \cdots \bigl(R^{\ell_{r-1} \, \ell_r}_\infty \bigr)^{-1}_{r-1 \, r} \\
&= \bigl(K^{\ell_r}_\infty(\xi_+) K^{\ell_r}_\infty(\xi_-)\bigr)_r  \prod_{s > r} \bigl( R_\infty^{\ell_r \ell_s} \bigr)^2_{r\, s},
\end{align*}
where we have used \eqref{Psymm} and the diagonality of both $R_\infty^{\ell_r \, \ell_s}$ and $K_\infty^{\ell}(\xi)$.
Applying \eqref{RKinftyformula}, for $\bm d \in \Z_{\geq 0}^N$ we have
\begin{align*}
 \mathcal{A}_{\infty,r} (v_{\bm d}) &=
\e^{-2d_r\bigl(\xi_++\xi_-+(2\ell_r-d_r)\eta\bigr)} \biggl( \prod_{s>r} \e^{4 \bigl(d_r d_s - \ell_r d_s - d_r \ell_s \bigr) \eta} \biggr)   v_{\bm d} \\
&= \e^{2d_r (\eta - \xi_+ - \xi_- ) + 4 d_r \Bigl(  \sum_{s>r}d_s+\tfrac{d_r-1}{2} - \sum_{s \geq r} \ell_s\Bigr)\eta } \Bigl( \prod_{s>r} \e^{-4 \ell_r d_s \eta}  \Bigr) v_{\bm d}.
\end{align*}
Using the bijection $\zeta_{M,N}: I_{M,N} \to P_N(M)$ we arrive at the following formula 
\begin{align*}
 \mathcal{A}_{\infty,r} (\Omega_{\bm k}) &=  \e^{2n_{\bm k}(r) (\eta - \xi_+ - \xi_- ) + 4 n_{\bm k}(r) \Bigl(\sum_{s>r}n_{\bm k}(s)+\tfrac{n_{\bm k}(r)-1}{2} - \sum_{s \geq r} \ell_s  \Bigr)\eta } \\
& \hspace{60mm} \times  \Bigl( \prod_{s>r} \e^{-4 \ell_r n_{\bm k}(s) \eta}  \Bigr) \Omega_{\bm k}
\end{align*}
for $\bm{k}\in I_{M,N}$.
We denote $\max_{\bm k}(r) =  \max\{ i \in \{1,\ldots,M\} \, | \, k_i=r \}$.
Recalling the definition of $n_{\bm k}(s)$ as given by \eqref{ndefn} we have $\sum_{s>r} n_{\bm k}(s) =  M - \textstyle \max_{\bm k}(r)$.
From the formula for the sum of a finite arithmetic progression we infer that
\[ n_{\bm k}(r)  \biggl( \sum_{s>r}n_{\bm k}(s)+\frac{n_{\bm k}(r)-1}{2} \biggr) 
=\sum_{i=1 \atop k_i=r}^M (M-i). \]
Formula \eqref{Ainftyformula} now readily follows.
\end{proof}

\section{The boundary Bethe vectors} \label{app:Bethevectors}

In this section we prove Proposition
\ref{thm:Bethevectordecomposition}

\subsection{Decomposition of ordinary Bethe vectors}

For generic values of $\bm \ell,\bm t \in \C^N$ and $x \in \C$ we write
\[ T^{\bm \ell}(x;\bm t) = L^{\ell_1}_{0 \, 1}(x-t_1) \cdots L^{\ell_N}_{0 \, N}(x-t_N) = \begin{pmatrix} \ast & B^{\bm \ell}(x;\bm t) \\ \ast & D^{\bm \ell}(x;\bm t) \end{pmatrix}  \]
with $B^{\bm \ell}(x;\bm t), D^{\bm \ell}(x;\bm t) \in \End(V^{\bm \ell})$; for the case $N=0$ the convention on empty products means $B^{\emptyset}(x;\emptyset)=0$.
For $\bm \ell, \bm t \in \C^N$ and $\bm x \in \C^M$, we define
\[ B^{\bm \ell}(\bm x;\bm t) = B^{\bm \ell}(x_1;\bm t) \cdots B^{\bm \ell}(x_M;\bm t) \in \End(V^{\bm \ell}) \]
with the convention that for $M=0$ we have $B^{\bm \ell}(\emptyset;\bm t) = \Id_{V^{\bm \ell}}$.
From \eqref{RTT} it follows that $[B^{\bm \ell}(x;\bm t) , B^{\bm \ell}(y;\bm t)]=0$; as a consequence we have $B^{\bm \ell}(w \bm x;\bm t) \Omega = B^{\bm \ell}(\bm x;\bm t) \Omega$ for any permutation $w \in S_M$.
The vectors $B^{\bm \ell}(\bm x;\bm t) \Omega$ are called \emph{spin-$\bm \ell$ ordinary (or type A) Bethe vectors}.
A simple induction argument with respect to $N$, ultimately a consequence 
of the ice rule for the $R$-operators,
establishes that
\begin{equation} \label{Bdecomposition}
B^{\bm \ell}(\bm x;\bm t) \Omega = \sum_{\bm d \in P_N(M)} b^{\bm \ell}_{\bm d}(\bm x;\bm t) v_{\bm d}
\end{equation}
for some $b^{\bm \ell}_{\bm d}(\bm x;\bm t) \in \C$, depending meromorphically on $\bm x \in \C^M$ and $\bm t \in \C^M$.\\

It is the aim of this section to provide a self-contained prescription for the closed formula for the coefficients $b^{\bm \ell}_{\bm d}(\bm x;\bm t)$.
They are the ``trigonometric weight funtions'' of \cite{TV2}; we refer to this work, and references therein, for other points of view on $b^{\bm \ell}_{\bm d}(\bm x;\bm t)$ and other derivations for their explicit formulae.\\

Recall that $J^\c := \{1,\ldots,M\} \setminus J$ for any $J \subset \{1,\ldots,M\}$ and introduce the notations
$\widehat{\bm z} := (z_1,\ldots,z_{N-1}) \in \C^{N-1}$ for $\bm z = (z_1,\ldots,z_N) \in \C^N$ and $\bm x_J := (x_{i_1},\ldots,x_{i_{M-d}})$ for $\bm x = (x_1,\ldots,x_M) \in \C^M$ and $J = \{ i_1,\ldots,i_{M-d} \}$ with $i_1<i_2<\ldots<i_{M-d}$.
From a statement analogous to \cite[Lemma 4.2]{RSV} it follows that
\begin{equation} \label{Brecursive} \begin{aligned}
\hspace{-4pt} B^{\bm \ell}(\bm x;\bm t) \Omega &=\sum_{J \subset \{1,\ldots,M\} } \biggl( \prod_{i \in J} \frac{\sinh(t_N - x_i - (\tfrac{1}{2} - \ell_N)\eta)}{\sinh(t_N - x_i -(\tfrac{1}{2} + \ell_N)\eta)}  \biggr) \\
& \quad \times \biggl( \prod_{i \in J \atop j \in J^\c} \frac{\sinh(x_i-x_j+\eta)}{\sinh(x_i-x_j)} \biggr) \biggl(B^{\widehat{\bm \ell}}({\bm x}_J;\widehat{\bm t})  \otimes \prod_{i \in J^\c} B^{\ell_N} (x_i;t_N) \biggr) \Omega. \hspace{-8pt}
\end{aligned} \end{equation}
Cf. \cite[Eq. (5.6)]{RSV2} we have, for $\ell, x, t \in \C$ and $d \in \Z_{\geq 0}$,
\[ B^\ell(x;t) v_d^\ell = \frac{-\e^{(-\ell+\tfrac{1}{2}+d)\eta} \sinh(\eta)}{\sinh(t-x-(\tfrac{1}{2}+\ell)\eta)} v_{d+1}^\ell. \]
Combining this with \eqref{Brecursive} we obtain
\begin{equation} \label{Brecursion0} \begin{aligned}
\hspace{-10pt} B^{\bm \ell}(\bm x;\bm t) \Omega &=\sum_{d=0}^M \e^{d(\frac{d}{2}-\ell_N)\eta}  \sum_{J \subset \{1,\ldots,M\} \atop \# J^\c =d}  \biggl( \prod_{i \in J^\c} \frac{-\sinh(\eta)}{\sinh(t_N -x_i- (\tfrac{1}{2} + \ell_N)\eta)} \biggr) \\
 & \hspace{24mm} \times \Biggl( \prod_{i \in J} \frac{\sinh(t_N - x_i - (\tfrac{1}{2} - \ell_N)\eta)}{\sinh(t_N - x_i - (\tfrac{1}{2} + \ell_N)\eta)} \Biggr) \\
 & \hspace{24mm} \times \Biggl( \prod_{i \in J \atop j \in J^\c} \frac{\sinh(x_i-x_j+\eta)}{\sinh(x_i-x_j)} \Biggr)  \Bigl(  B^{\widehat{\bm \ell}}(\bm x_J ; \widehat{\bm t}) \Omega^{\widehat{\bm \ell}} \Bigr) \otimes v^{\ell_N}_d. \hspace{-10pt}
\end{aligned} \end{equation}

A collection of functions
\[ \{  c^{\bm \ell}_{\bm d}: \C^M \otimes \C^N \to \C \text{ meromorphic} \, | \, \bm \ell \in \C^N, \, \bm d \in P_N(M) \}_{M,N \in \Z_{\geq 0}} \]
is said to satisfy \emph{quantum affine $\mathfrak{sl}_2$-recursion}
if the initial condition
\begin{equation} \label{Binitial} c^{\emptyset}_\emptyset(\emptyset;\emptyset) = 1 \end{equation}
and the recurrence relation
\begin{equation}  \label{Brecursion} \begin{aligned}
c^{\bm \ell}_{\bm d}(\bm x;\bm t)  &=\e^{d_N(\frac{d_N}{2}-\ell_N)\eta}  \sum_{J \subset \{1,\ldots,M\} \atop \# J^\c = d_N}  \biggl( \prod_{i \in J^\c} \frac{-\sinh(\eta)}{\sinh(t_N-x_i-(\tfrac{1}{2} + \ell_N)\eta)} \biggr)  \\
 & \hspace{40mm} \times \biggl( \prod_{i \in J} \frac{\sinh(t_N - x_i - (\tfrac{1}{2} - \ell_N)\eta)}{\sinh(t_N - x_i - (\tfrac{1}{2} + \ell_N)\eta)} \biggr) \\
 & \hspace{40mm} \times \biggl( \prod_{i \in J \atop j \in J^\c} \hspace{-2pt} \frac{\sinh(x_i-x_j+\eta)}{\sinh(x_i-x_j)} \biggr)  c^{\widehat{\bm \ell}}_{\widehat{\bm d}}(\bm x_J;\widehat{\bm t})\hspace{-13pt}
\end{aligned} \end{equation}
hold true.
Our plan is now as follows.
First we will show that the $  b_{\bm d} $ implicitly defined in \eqref{Bdecomposition} satisfy this recursion.
Then we will present another collection of meromorphic functions $a^{\bm \ell}_{\bm d} $  on $\C^M \times \C^N$, parametrized by $\bm \ell \in \C^N$, $\bm d \in \Z_{\geq 0}^N$, by means of a closed-form expression, which can be shown to satisfy the same recursion.
Since quantum affine $\mathfrak{sl}_2$-recursion has a unique solution, we obtain $b^{\bm \ell}_{\bm d} = a^{\bm \ell}_{\bm d}$ and an explicit expression for the Bethe vectors in terms of the basis $\{ v^{\bm \ell}_{\bm d} \}_{\bm d \in \Z^N_{\geq 0}}$ follows.

\begin{lem}  \label{lem:recursion1}
$ \{  b^{\bm \ell}_{\bm d}\, | \, \bm \ell \in \C^N, \, \bm d \in P_N(M)  \}_{M,N \in \Z_{\geq 0}}$ satisfies quantum affine $\mathfrak{sl}_2$-recursion.
\end{lem}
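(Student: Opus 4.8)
The plan is to verify the two defining conditions of quantum affine $\mathfrak{sl}_2$-recursion directly, taking advantage of the fact that the expansion \eqref{Brecursion0} has already been derived (by combining the splitting \eqref{Brecursive} with the single-operator action of $B^{\ell_N}(x;t_N)$ on $v^{\ell_N}_d$). No further induction is needed, since \eqref{Brecursion} is merely a single-step identity expressing the length-$N$ coefficients in terms of the length-$(N-1)$ ones; I would simply read off each instance of it from \eqref{Brecursion0}.

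For the recurrence, fix $N \geq 1$ and $\bm d = (\widehat{\bm d}, d_N) \in P_N(M)$, so that under the factorization $V^{\bm\ell} = V^{\widehat{\bm\ell}} \otimes V^{\ell_N}$ we have $v_{\bm d} = v_{\widehat{\bm d}} \otimes v^{\ell_N}_{d_N}$. In \eqref{Brecursion0} the summation index $d$ is exactly the label of the last tensor leg $v^{\ell_N}_d$; hence, by linear independence of $\{v^{\ell_N}_d\}_d$, extracting the coefficient of $v_{\bm d}$ pins $d = d_N$ and discards every other summand. The remaining inner factor $B^{\widehat{\bm\ell}}(\bm x_J;\widehat{\bm t})\Omega^{\widehat{\bm\ell}}$ is a Bethe vector for the $(N-1)$-fold tensor product in the $\#J = M - d_N$ variables $\bm x_J$; applying the definition \eqref{Bdecomposition} to this smaller system expresses it as $\sum_{\widehat{\bm d}} b^{\widehat{\bm\ell}}_{\widehat{\bm d}}(\bm x_J;\widehat{\bm t}) v_{\widehat{\bm d}}$, so that its $v_{\widehat{\bm d}}$-component is $b^{\widehat{\bm\ell}}_{\widehat{\bm d}}(\bm x_J;\widehat{\bm t})$. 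Collecting these contributions, the coefficient of $v_{\bm d}$ in \eqref{Brecursion0} matches the right-hand side of \eqref{Brecursion} term by term: the scalar $\e^{d_N(\frac{d_N}{2}-\ell_N)\eta}$, the sum over $J$ with $\#J^\c = d_N$, and the three products over $J^\c$, $J$ and $J \times J^\c$ all coincide verbatim. This is precisely the asserted recurrence for $\{b^{\bm\ell}_{\bm d}\}$.

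For the initial condition I would handle the degenerate case $N = M = 0$: here $V^{\emptyset} = \C$, the vector $\Omega = v_\emptyset$ is the unit, and the conventions fixed around \eqref{Bdecomposition} give $B^{\emptyset}(\emptyset;\emptyset) = \Id_{V^{\emptyset}}$, whence $B^{\emptyset}(\emptyset;\emptyset)\Omega = v_\emptyset$ and $b^{\emptyset}_\emptyset(\emptyset;\emptyset) = 1$, as required by \eqref{Binitial}. I do not anticipate a genuine obstacle, as the argument is pure bookkeeping; the only delicate points are the empty-product and $N=0$ conventions and the cleanliness of the coefficient extraction, which is guaranteed because the final tensor leg records the value of $d_N$ unambiguously. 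In particular, the symmetry of $b^{\bm\ell}_{\bm d}$ under permutations of $\bm x$ plays no role here, since \eqref{Bdecomposition} and \eqref{Brecursion0} fix these coefficients directly as the components in the basis $\{v_{\bm d}\}$.
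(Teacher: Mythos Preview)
Your argument is correct and follows essentially the same route as the paper: the initial condition is read off from the $N=0$ convention (the paper phrases this as $B^{\emptyset}(\bm x;\emptyset)=\delta_{0,M}$), and the recurrence is obtained by combining \eqref{Brecursion0} with \eqref{Bdecomposition} and extracting the coefficient of the basis vector $v^{\bm\ell}_{\bm d}$. Your write-up is more detailed, but the underlying logic is identical.
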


\begin{proof}
In the case $N=0$, from $B^\emptyset(\bm x;\emptyset) = \delta_{0,M}$ we obtain \eqref{Binitial}.
Combining \eqref{Brecursion0} with \eqref{Bdecomposition} and using that the $v^{\bm \ell}_{\bm d}$ form a basis for $V^{\bm \ell}$ we obtain that the $b^{\bm \ell}_{\bm d}$ also satisfy the recurrence relation \eqref{Brecursion}.
\end{proof}

Let $\bm d \in P_N(M)$ and define
\[ I(\bm d) := \{ \bm m \in \{1,\ldots,N\}^M \, | \, \forall s\, n_{\bm m}(s) = d_s \} \]
and
\begin{align*}
a^{\bm \ell}_{\bm d}(\bm x;\bm t) &= \sum_{\bm m \in I(\bm d)} \biggl( \prod_i \frac{-\e^{\bigl(\tfrac{n_{\bm m}(m_i)}{2}  - \ell_{m_i}\bigr)\eta} \sinh(\eta)}{\sinh( t_{ m_i} - x_i - (\tfrac{1}{2} + \ell_{m_i})\eta)}  \prod_{s>m_i}  \frac{\sinh(t_s - x_i - (\tfrac{1}{2} - \ell_s)\eta)}{\sinh(t_s - x_i - (\tfrac{1}{2} + \ell_s)\eta)} \biggr) \\
& \hspace{70mm} \times  \prod_{i,j  \atop m_i<m_j}\frac{\sinh(x_i-x_j+\eta)}{\sinh(x_i-x_j)}.
\end{align*}

\begin{lem} \label{lem:recursion2}
$\{ a^{\bm \ell}_{\bm d} \, | \, \bm \ell \in \C^N, \, \bm d \in P_N(M) \}_{M,N \in \Z_{\geq 0}}$ satisfies quantum affine $\mathfrak{sl}_2$-recursion.
\end{lem}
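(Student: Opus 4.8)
The plan is to verify directly that the family $\{a^{\bm\ell}_{\bm d}\}$ satisfies the two defining properties of quantum affine $\mathfrak{sl}_2$-recursion, namely the initial condition \eqref{Binitial} and the recurrence relation \eqref{Brecursion}. The initial condition is immediate: for $N=M=0$ the index set $I(\emptyset)$ consists of the single empty tuple and all products in the definition of $a^{\emptyset}_{\emptyset}$ are empty, so $a^{\emptyset}_{\emptyset}(\emptyset;\emptyset)=1$.

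For the recurrence, the key idea is to organize the sum over $\bm m\in I(\bm d)$ according to the positions of the value $N$. Given $\bm m\in I(\bm d)$, set $J^\c:=\{\,i\in\{1,\ldots,M\}\mid m_i=N\,\}$ and $J:=\{1,\ldots,M\}\setminus J^\c$; then $\#J^\c=n_{\bm m}(N)=d_N$. Conversely, for each $J\subset\{1,\ldots,M\}$ with $\#J^\c=d_N$, the assignment $\bm m\mapsto \bm m|_J$ (re-indexed by the order-preserving bijection $J\cong\{1,\ldots,M-d_N\}$, together with the corresponding re-indexing of $\bm x_J$) sets up a bijection between the tuples $\bm m\in I(\bm d)$ with $\{i:m_i=N\}=J^\c$ and the tuples in $I(\widehat{\bm d})$. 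This bijection is the backbone of the argument.

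With this partition in place I would split each factor of the summand over $i\in J^\c$ and $i\in J$. For $i\in J^\c$ one has $m_i=N$ and $n_{\bm m}(m_i)=d_N$, so the $i$-factor reduces to $-\e^{(\frac{d_N}{2}-\ell_N)\eta}\sinh(\eta)\big/\sinh(t_N-x_i-(\tfrac{1}{2}+\ell_N)\eta)$ (the product over $s>m_i=N$ being empty); collecting the $d_N$ exponentials produces precisely the prefactor $\e^{d_N(\frac{d_N}{2}-\ell_N)\eta}$ and the first line of \eqref{Brecursion}. For $i\in J$ one has $m_i<N$, hence $n_{\bm m}(m_i)=n_{\bm m|_J}(m_i)$, and the product $\prod_{s>m_i}$ (now running up to $N$) splits off its $s=N$ term $\sinh(t_N-x_i-(\tfrac{1}{2}-\ell_N)\eta)\big/\sinh(t_N-x_i-(\tfrac{1}{2}+\ell_N)\eta)$, which assembles over $i\in J$ into the second line of \eqref{Brecursion}. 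Finally, in the pair product I would classify each ordered pair $(i,j)$ with $m_i<m_j$: pairs with $i,j\in J$ reproduce the pair product of $a^{\widehat{\bm\ell}}_{\widehat{\bm d}}(\bm x_J;\widehat{\bm t})$; pairs with $m_i=m_j=N$ cannot occur; and pairs with $i\in J$, $j\in J^\c$ (necessarily oriented this way, since $m_j=N$ is maximal) contribute exactly $\prod_{i\in J,\,j\in J^\c}\sinh(x_i-x_j+\eta)\big/\sinh(x_i-x_j)$, the third line of \eqref{Brecursion}. The residual factors attached to $i\in J$ are precisely those defining $a^{\widehat{\bm\ell}}_{\widehat{\bm d}}(\bm x_J;\widehat{\bm t})$, so summing over $\bm m|_J\in I(\widehat{\bm d})$ and then over all $J$ with $\#J^\c=d_N$ yields \eqref{Brecursion}.

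The computation is elementary; the only point requiring care is the bookkeeping, and in particular the identity $n_{\bm m}(m_i)=n_{\bm m|_J}(m_i)$ for $i\in J$ (valid because no index in $J$ takes the value $N$), which is what lets the exponential weights in the $a$-formula descend correctly to the $(N-1)$-variable problem. I expect this matching of the counting functions, together with the correct treatment of the empty-product conventions when $d_N=0$ or $M-d_N=0$, to be the main things to check.
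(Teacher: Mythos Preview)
Your proposal is correct and is essentially the same argument as the paper's: both rest on the bijection between $I(\bm d)$ and pairs $(J,\bm m')$ with $\#J^\c=d_N$ and $\bm m'\in I(\widehat{\bm d})$, obtained by reading off the positions of the value $N$, followed by the same splitting of the single-index and pair factors. The only cosmetic difference is direction---you decompose $a^{\bm\ell}_{\bm d}$ into the right-hand side of \eqref{Brecursion}, whereas the paper starts from the right-hand side and reassembles $a^{\bm\ell}_{\bm d}$; and you keep the exponential weights distributed using $n_{\bm m}(m_i)=n_{\bm m|_J}(m_i)$ for $i\in J$, while the paper first collapses them via $\sum_i(\tfrac{n_{\bm m}(m_i)}{2}-\ell_{m_i})=\sum_s d_s(\tfrac{d_s}{2}-\ell_s)$.
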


\begin{proof}
It is immediately seen that the $ a^{\bm \ell}_{\bm d}$ satisfy \eqref{Binitial} owing to the convention that empty sums are zero and empty products are one.
To establish \eqref{Brecursion} for the $ a^{\bm \ell}_{\bm d}$ first note that if $n_{\bm m}(s) = d_s$ then
\[  \sum_{i=1}^M \Bigl(\frac{n_{\bm m}(m_i)}{2}  - \ell_{m_i}\Bigr)  = \sum_{s=1}^N \sum_{i=1 \atop m_i =s}^M  \Bigl(\frac{n_{\bm m}(s)}{2}  - \ell_s\Bigr) = \sum_{s=1}^N d_s \Bigl(\frac{d_s}{2}  - \ell_s\Bigr). \]
Note that
\begin{align*}
a^{\widehat{\bm \ell}}_{\widehat{\bm d}}(\bm x_{ J};\widehat{\bm t}) =&
\e^{\sum_{s=1}^{N-1} d_s\bigl(\tfrac{d_s}{2} - \ell_s\bigr) \eta}  \sum_{\bm m  \in I(\widehat{\bm d})} \Biggl( \prod_{i \in J} \frac{-\sinh(\eta)}{\sinh(t_{ m_i} - x_i - (\tfrac{1}{2} + \ell_{ m_i })\eta)}  \\
& \hspace{10mm} \times  \prod_{s =m_i+1}^{N-1} \frac{\sinh(t_s -x_i -  (\tfrac{1}{2} - \ell_s)\eta)}{\sinh(t_s - x_i - (\tfrac{1}{2} + \ell_s)\eta)} \Biggr) \prod_{i,j\in J \atop m_i<m_j} \frac{\sinh(x_i-x_j+\eta)}{\sinh(x_i-x_j)}.
\end{align*}
Hence, the right-hand side of \eqref{Brecursion} for $c^{\bm \ell}_{\bm d} = a^{\bm \ell}_{\bm d}$ is given by
\begin{align*}
& \e^{\sum_{s=1}^N d_s\bigl(\tfrac{d_s}{2} - \ell_s\bigr) \eta} \sum_{\bm m  \in I(\widehat{\bm d})}  \sum_{J \subset \{1,\ldots,M\} \atop \# J^\c = d_N}  \\
& \hspace{15mm} \biggl( \prod_{i \in J^\c} \frac{-\sinh(\eta)}{\sinh(t_N -x_i - (\tfrac{1}{2} + \ell_N)\eta)} \biggr) \biggl( \prod_{i \in J}   \frac{-\sinh(\eta)}{\sinh( t_{m_i} - x_i -  (\tfrac{1}{2} + \ell_{m_i} )\eta)} \biggr) \\
& \hspace{15mm} \times \Biggl( \prod_{i \in J} \frac{\sinh(t_N - x_i -  (\tfrac{1}{2} - \ell_N)\eta)}{\sinh(t_N - x_i - (\tfrac{1}{2} + \ell_N)\eta)}  \prod_{s =m_i+1 }^{N-1} \frac{\sinh(t_s - x_i - (\tfrac{1}{2} - \ell_s)\eta)}{\sinh(t_s - x_i - (\tfrac{1}{2} + \ell_s)\eta)}  \Biggr)    \\
& \hspace{15mm} \times \biggl( \prod_{i \in J \atop j \in J^\c} \frac{\sinh(x_i-x_j+\eta)}{\sinh(x_i-x_j)} \biggr)\biggl(  \prod_{i,j\in J \atop m_i<m_j} \frac{\sinh(x_i-x_j+\eta)}{\sinh(x_i-x_j)} \biggr).
\end{align*}
We specify a map
\[  I(\widehat{\bm d}) \times \{ J \subset \{1,\ldots,M\} \, | \, \#J = M-d_N \}  \to I(\bm d) \]
by inserting $N$'s in the $(M-d_N)$-tuples at the places given by the elements of $J^\c = \{1,\ldots,M\} \setminus J$, i.e.
\begin{align*} ( (m_1,\ldots,m_{M-d_N}),J) \mapsto & (m_1,\ldots,m_{j_1-1},N,m_{j_1},\ldots,m_{j_2-2},N,m_{j_2-1},\ldots, \\
& \qquad \qquad \ldots,m_{j_{d_N}-d_N},N,m_{j_{d_N}-d_N+1},\ldots,m_{M-d_N}), \end{align*}
where $J^\c  = \{ j_1,\ldots,j_{d_N}\}$ with $j_1<j_2<\ldots < j_{d_N}$; this is evidently injective, and since both sets have finite cardinality $\frac{(M-d_N)!}{\prod_{s =1}^{N-1} d_s!} \binom{M}{d_N} = \frac{M!}{\prod_{s=1}^N  d_s!}$ it follows that the map is bijective.
Hence the right-hand side of \eqref{Brecursion} equals
\begin{align*}
&  \e^{\sum_{s=1}^N d_s\bigl(\tfrac{d_s}{2} - \ell_s\bigr) \eta}  \sum_{\bm m \in I(\bm d)}   \\
& \qquad \biggl( \prod_{i=1 \atop m_i = N}^M \frac{-\sinh(\eta)}{\sinh(t_N - x_i - (\tfrac{1}{2} + \ell_N)\eta)} \biggr) \biggl( \prod_{i=1 \atop m_i \ne N}^M \frac{-\sinh(\eta)}{\sinh( t_{m_i} - x_i - (\tfrac{1}{2} + \ell_{m_i})\eta)} \biggr) \\
& \qquad \times \Biggl( \prod_{i=1 \atop m_i \ne N}^M \frac{\sinh(t_N - x_i - (\tfrac{1}{2} - \ell_N)\eta)}{\sinh(t_N - x_i - (\tfrac{1}{2} + \ell_N)\eta)} \prod_{s=m_i+1}^{N-1} \frac{\sinh(t_s - x_i - (\tfrac{1}{2} - \ell_s)\eta)}{\sinh(t_s - x_i - (\tfrac{1}{2} + \ell_s)\eta)}  \Biggr)    \\
& \qquad \times \biggl( \prod_{i,j=1 \atop m_i < m_j = N}^{M} \frac{\sinh(x_i-x_j+\eta)}{\sinh(x_i-x_j)} \biggr)\biggl(  \prod_{i,j =1 \atop m_i<m_j \ne N}^M \frac{\sinh(x_i-x_j+\eta)}{\sinh(x_i-x_j)} \biggr),
\end{align*}
where the remaining products recombine so as to yield $a^{\bm \ell}_{\bm d}(\bm x;\bm t)$ as required.
\end{proof}

Combining Lemmas \ref{lem:recursion1} and \ref{lem:recursion2} we deduce
\begin{equation}  \label{Bdecomposition2} \begin{aligned}
B^{\bm \ell}(\bm x;\bm t) \Omega  = \hspace{-3pt} \sum_{\bm k \in I_{M,N}} \sum_{\bm m \in S_M(\bm k)} & \biggl(  \prod_{i =1}^M  \frac{-\e^{\bigl(\tfrac{n_{\bm m}(m_i)}{2}  - \ell_{m_i}\bigr)\eta} \sinh(\eta)}{\sinh(t_{m_i} - x_i - (\tfrac{1}{2} + \ell_{m_i})\eta)} \\
& \qquad \times \prod_{s=m_i+1}^N \frac{\sinh(t_s - x_i - (\tfrac{1}{2} - \ell_s)\eta)}{\sinh(t_s - x_i - (\tfrac{1}{2} + \ell_s)\eta)} \biggr) \hspace{-14pt}  \\
 &   \times   \biggl( \prod_{i,j =1 \atop m_i<m_j}^M  \frac{\sinh(x_i-x_j+\eta)}{\sinh(x_i-x_j)} \biggr)   \Omega_{\bm m}. \hspace{-14pt}
\end{aligned} \end{equation}

\subsection{Decomposition of the boundary Bethe vectors} \label{app:bBethevectordecomposition}

Having completed the induction, we may now consider $\bm \ell$ fixed and drop it from the notation.
Because of \eqref{Lcrossing} we have
\[  T(-x;\bm t)^{-1}  = \biggl( \prod_{s=1}^N \frac{\sinh(t_s+x+(\tfrac{1}{2}-\ell_s)\eta)}{\sinh(t_s+x+(\tfrac{1}{2}+\ell_s)\eta)}\biggr) \siy_0 T(-x-\eta;\bm t)^{t_0} \siy_0. \]
Hence, from \eqref{bB0defn} it follows that
\begin{equation} \label{bBexpression0}
\begin{aligned}
\hspace{-7pt} \mathcal{B}(x;\bm t) &=  \biggl( \prod_{s=1}^N \frac{\sinh(t_s+x+(\tfrac{1}{2}-\ell_s)\eta)}{\sinh(t_s+x+(\tfrac{1}{2}+\ell_s)\eta)}\biggr) \\
& \quad \times \biggl( D(-x-\eta;\bm t) B(x;\bm t) - \frac{\sinh(\xi_--x)}{\sinh(\xi_-+x)} B(-x-\eta;\bm t) D(x;\bm t) \biggr). \hspace{-10pt}
\end{aligned}
\end{equation}
From \eqref{RTT} we obtain the commutation relation
\begin{align*}
& D(x;\bm t) B(y;\bm t) = \\
&= \frac{\sinh(x-y+\eta)}{\sinh(x-y)} B(y;\bm t) D(x;\bm t)  - \frac{\sinh(\eta)}{\sinh(x-y)} B(x;\bm t) D(y;\bm t).
\end{align*}
Using this and a trigonometric identity we infer from \eqref{bBexpression0} that
\begin{align*}
\mathcal{B}(x;\bm t)   &= \frac{\sinh(2x)}{\sinh(2x+\eta)}  \biggl( \prod_{s=1}^N \frac{\sinh(t_s+x+(\tfrac{1}{2}-\ell_s)\eta)}{\sinh(t_s+x+(\tfrac{1}{2}+\ell_s)\eta)}\biggr)  \\
& \quad \times \biggl( B(x;\bm t)D(-x-\eta;\bm t)  - \frac{\sinh(\xi_--x-\eta)}{\sinh(\xi_-+x)} B(-x-\eta;\bm t) D(x;\bm t) \biggr).
\end{align*}
Hence
\begin{equation} \label{bBexpression1} \widetilde{\mathcal{B}}( x;\bm t) = \sum_{\epsilon \in \{\pm \}} \epsilon \frac{\sinh(\widetilde \xi_- - \epsilon x)}{\sinh(\eta)} B(-\epsilon x-\tfrac{\eta}{2};\bm t) D(\epsilon x - \tfrac{\eta}{2};\bm t). \end{equation}
\eqref{bBexpression1} serves as the base case of an inductive argument analogous to the proof of \cite[Prop. 4.1]{RSV}.
It establishes that
\begin{equation} \label{bBexpression2}  \begin{aligned}
\widetilde{\mathcal{B}}(\bm x;\bm t) =& \sum_{\bm \epsilon \in \{\pm\}^M } \biggl( \prod_{i=1}^M \epsilon_i  \frac{\sinh(\widetilde \xi_- - \epsilon_i x_i)}{\sinh(\eta)} \biggr) \biggl( \prod_{i,j = 1 \atop i<j}^M \frac{\sinh(\epsilon_i x_i + \epsilon_j x_j + \eta)}{\sinh(\epsilon_i x_i + \epsilon_j x_j)} \biggr) \\
 & \hspace{20mm} \times \biggl( \prod_{i=1}^M B(-\epsilon_i x_i - \tfrac{\eta}{2};\bm t) \biggr) \biggl( \prod_{i=1}^M D(\epsilon_i x_i - \tfrac{\eta}{2};\bm t) \biggr). \end{aligned} \end{equation}

In \cite[Eq. (5.8)]{RSV2} the relation
\begin{equation} \label{DOmega} D(x;\bm t) \Omega = \left( \prod_{s=1}^N\frac{\sinh(t_s-x-(\tfrac{1}{2}-\ell_s)\eta)}{\sinh(t_s-x-(\tfrac{1}{2}+\ell_s)\eta)} \right) \Omega \end{equation}
is derived.
Combining it with \eqref{bBexpression2} we find the following expression for the spin-$\bm \ell$ boundary Bethe vectors in terms of the spin-$\bm \ell$ ordinary (type A) Bethe vectors:
\begin{equation}  \label{bBformula}  \begin{aligned}
\hspace{-12pt} \widetilde{\mathcal{B}}(\bm x;\bm t) \Omega =& \sum_{\bm \epsilon \in \{\pm\}^M } \Biggl( \prod_{i=1}^M \epsilon_i  \frac{\sinh(\widetilde \xi_- - \epsilon_i x_i )}{\sinh(\eta)}  \prod_{s=1}^N \frac{\sinh(t_s - \epsilon_i x_i + \ell_s \eta)}{\sinh(t_s - \epsilon_i x_i - \ell_s \eta)} \Biggr)  \\
 & \qquad  \times \biggl( \prod_{i,j=1 \atop i<j}^M \frac{\sinh(\epsilon_i x_i + \epsilon_j x_j + \eta)}{\sinh(\epsilon_i x_i + \epsilon_j x_j)} \biggr) \biggl( \prod_{i=1}^M B(-\epsilon_i x_i - \tfrac{\eta}{2};\bm t) \biggr) \Omega. \hspace{-15pt}
\end{aligned} \end{equation}
In this formula we can then substitute expression \eqref{Bdecomposition2} 
to arrive at an explicit formula for the coefficients 
$\beta_{\bm k}(\bm x;\bm t)$ in the decomposition 
\eqref{bBdecomposition}, yielding Thm. \ref{thm:Bethevectordecomposition}.


\end{document}